\title{\vspace{-1cm}Smooth affine group schemes over the dual numbers}
\author{\vspace{0cm} Matthieu Romagny and Dajano Tossici}
\institution{Universit\'e de Rennes, CNRS, IRMAR -- UMR 6625, F-35000 Rennes, France}\\
\email{matthieu.romagny@univ-rennes1.fr}}\\
\institution{Institut de Math\'ematiques de Bordeaux,
351 Cours de la Liberation, 33405 Talence, France}\\
\email{Dajano.Tossici@math.u-bordeaux.fr}}
\date{\vspace{-5ex}} 
\journal{\'Epijournal de G\'eom\'etrie Alg\'ebrique} 
\renewcommand\thesubsection{{\thesection}\@arabic\c@subsection.}
\renewcommand{\p@subsection}{\arabic{section}.\arabic{subsection}\expandafter\@gobble}
\newcounter{subsubsectionnum}
\numberwithin{subsubsectionnum}{subsection}
\renewcommand{\thesubsubsectionnum}{{\thesubsection}\arabic{subsubsectionnum}}
\newlength{\lengthtitle}
\newcommand{\newsubsubsection}[1]{
\settowidth{\lengthtitle}{#1}
\ifnum\lengthtitle=0\paragraph{\bfseries \thesubsubsectionnum.}\else\paragraph{\bfseries \thesubsubsectionnum.~#1.}\fi
\refstepcounter{subsubsectionnum}}
\newcounter{subsubsectionnumB}
\renewcommand{\thesubsubsectionnumB}{B.\arabic{subsubsectionnumB}}
\newcommand{\newsubsubsectionB}[1]{
\settowidth{\lengthtitle}{#1}
\ifnum\lengthtitle=0\paragraph{\bfseries \thesubsubsectionnumB.}\else\paragraph{\bfseries \thesubsubsectionnumB.~#1.}\fi
\refstepcounter{subsubsectionnumB}}
\newenvironment{item-title}[1]{\newsubsubsection{#1}}{}
\newenvironment{item-titleB}[1]{\newsubsubsectionB{#1}}{}
\newenvironment{remark}{\begin{item-title}{Remark}}{\end{item-title}}
\newenvironment{definition}{\begin{item-title}{Definition}}{\end{item-title}}
\newenvironment{prop}{\begin{item-title}{Proposition}\em}{\rm\end{item-title}}
\newenvironment{example-title}[2]{\begin{item-title}{Example~#1: #2}}{\end{item-title}}
\newenvironment{theocite}[1]{\begin{item-title}{Theorem~(#1)}\em}{\rm\end{item-title}}
\newenvironment{lemma}{\begin{item-title}{Lemma}\em}{\rm\end{item-title}}
\newenvironment{corollary}{\begin{item-title}{Corollary}\em}{\rm\end{item-title}}
\newenvironment{example}{\begin{item-title}{Example}}{\end{item-title}}
\newenvironment{theorem}{\begin{item-title}{Theorem}\em}{\rm\end{item-title}}
\newenvironment{proof-of}[1]{\begin{proof}[Proof of #1]}{\end{proof}}
\newenvironment{definition*}{\begin{item-titleB}{Definition}}{\end{item-titleB}}
\newcommand{\smallvee}{{\scriptscriptstyle\vee}}
\newcommand{\eps}{\varepsilon}
\renewcommand{\ge}{\geqslant}
\def\itemn#1{\item[\hspace{0.6mm} {\rm (#1)}]}
\def\itemm#1{\item[\hspace{10mm} {\rm (#1)}]}
\def\too{\longrightarrow}
\def\tooo{\relbar\joinrel\relbar\joinrel\longrightarrow}
\def\into{\hookrightarrow}
\newcommand*{\intoo}{\ensuremath{\lhook\joinrel\relbar\joinrel\rightarrow}}
\def\isomto{\xrightarrow{\,\smash{\raisebox{-0.5ex}{\ensuremath{\scriptstyle\sim}}}\,}}
\renewcommand{\tilde}{\widetilde}
\mathchardef\ordinarycolon\mathcode`\:
\newtheorem{theointro}{Theorem}
\DeclareMathOperator{\Aut}{Aut}
\DeclareMathOperator{\Hom}{Hom}
\DeclareMathOperator{\End}{End}
\DeclareMathOperator{\GL}{GL}
\DeclareMathOperator{\Ext}{Ext}
\DeclareMathOperator{\ext}{ext}
\DeclareMathOperator{\Sch}{Sch}
\DeclareMathOperator{\Fun}{Fun}
\DeclareMathOperator{\Gr}{Gr}
\DeclareMathOperator{\id}{id}
\DeclareMathOperator{\Spec}{Spec}
\DeclareMathOperator{\Mod}{Mod}
\DeclareMathOperator{\Der}{Der}
\DeclareMathOperator{\Alg}{Alg}
\DeclareMathOperator{\Lie}{Lie}
\DeclareMathOperator{\Sym}{S}
\DeclareMathOperator{\diff}{d}
\DeclareMathOperator{\Ad}{Ad}
\DeclareMathOperator{\Fr}{F}
\DeclareMathOperator{\V}{V}
\DeclareMathOperator{\CU}{CU}
\DeclareMathOperator{\SCU}{SCU}
\DeclareMathOperator{\ad}{ad}
\DeclareMathOperator{\T}{T}
\DeclareMathOperator{\Sets}{Sets}
\DeclareMathOperator{\Homc}{Homc}
 \def\cE{{\mathcal E}}
  \def\cO{{\mathcal O}}
\def\cS{{\mathcal S}}
\renewcommand\AA{\mathbb{A}} 
 \newcommand\DD{\mathbb{D}}
 \newcommand\FF{\mathbb{F}}
\newcommand\GG{\mathbb{G}} 
 \newcommand\LL{\mathbb{L}}
\newcommand\OO{\mathbb{O}}
 \newcommand\VV{\mathbb{V}}
 \newcommand\ZZ{\mathbb{Z}}
\begin{document}


\maketitle



\begin{prelims}

\vspace{-0.55cm}

\def\abstractname{Abstract}
\abstract{In this article we provide an equivalence between the category of
smooth affine group schemes over the ring of generalized dual
numbers $k[I]$, and the category of extensions of the form
$1\to \Lie(G,I)\to E\to G\to 1$ where $G$ is a smooth affine
group scheme over $k$. Here $k$ is an arbitrary commutative ring
and $k[I]=k\oplus I$ with $I^2=0$. The equivalence is given by
Weil restriction, and we provide a quasi-inverse which we call
{\em Weil extension}. It is compatible with the exact structures
and the $\OO_k$-module stack structures on both categories.
Our constructions rely on the use of the group algebra scheme
of an affine group scheme; we introduce this object and
establish its main properties. As an application, we establish
a Dieudonn\'e classification for smooth, commutative, unipotent
group schemes over $k[I]$ when $k$ is a perfect field.}

\keywords{Group scheme, deformation, dual numbers, adjoint representation, Weil restriction, Dieudonn\'e classification of unipotent groups}

\MSCclass{14L15 (primary);
14D15, 14G17 (secondary)}

\vspace{0.15cm}

\languagesection{Fran\c{c}ais}{%

\textbf{Titre. Sch\'emas en groupes affines et lisses sur les nombres duaux} \commentskip \textbf{R\'esum\'e.} Nous construisons une \'equivalence entre la cat\'egorie des
sch\'emas en groupes affines et lisses sur l'anneau des nombres duaux
g\'en\'eralis\'es $k[I]$, et la cat\'egorie des extensions de la forme
$1\to \Lie(G,I)\to E\to G\to 1$ o\`u $G$ est un sch\'ema en groupes
affine, lisse sur $k$. Ici $k$ est un anneau commutatif arbitraire
et $k[I]=k\oplus I$ avec $I^2=0$. L'\'equivalence est donn\'ee par
la restriction de Weil, et nous construisons un foncteur quasi-inverse
explicite que nous appelons {\em extension de Weil}. Ces foncteurs sont
compatibles avec les structures exactes et avec les structures de champs
en $\OO_k$-modules des deux cat\'egories. Nos constructions s'appuient
sur le sch\'ema en alg\`ebres de groupe d'un sch\'ema en groupes affines,
que nous introduisons et dont nous donnons les propri\'et\'es principales.
En application, nous donnons une classification de Dieudonn\'e pour les
sch\'emas en groupes commutatifs, lisses, unipotents sur $k[I]$
lorsque $k$ est un corps parfait.}

\end{prelims}


\newpage

\setcounter{tocdepth}{2} \tableofcontents

\setcounter{section}{0}
\section{Introduction} \label{section:introduction}

Throughout this article, we fix a commutative ring $k$ and
a free $k$-module $I$ of finite rank $r\ge 1$. We consider
the ring of (generalized) dual numbers $k[I]:=k\oplus I$
with $I^2=0$.
We write $h:\Spec(k[I])\to\Spec(k)$ the structure map and
$i:\Spec(k)\to\Spec(k[I])$ the closed immersion. Also we denote
by $\OO_k$ the $\Spec(k)$-ring scheme such that if $R$ is a
$k$-algebra then $\OO_k(R)=R$ with its ring structure.

\subsection{Motivation, results, plan of the article}

\begin{item-title}{Motivation}
The starting point of our work is a relation between deformations
and group extensions. To explain the idea, let $G$ be an affine,
flat, finitely presented
$k$-group scheme. It is shown in Illusie's book~\cite{Il72}
that the set of isomorphism classes of deformations
of $G$ over $k[I]$ is in bijection with the cohomology group
$\mathrm{H}^2(BG,\underline{\ell}{}_G^\smallvee\otimes I)$,
see chap.~VII, thm~3.2.1 in {\it loc. cit}. Here, the
coefficients of cohomology are the derived dual of the
equivariant cotangent complex $\underline{\ell}{}_G \in D(BG)$,
tensored (in the derived sense) by $I$ viewed as the coherent
sheaf it defines on the fpqc site of $BG$, also equal to
the vector bundle $\VV(I^\smallvee)$.
If we assume moreover that $G$ is smooth, then
the augmentation $\underline{\ell}{}_G\to \omega^1_G$
to the sheaf of invariant differential 1-forms is a
quasi-isomorphism and it follows that
$\underline{\ell}{}_G^\smallvee\simeq \Lie G$.
Since coherent cohomology of $BG$ is isomorphic to group
cohomology of~$G$, the cohomology group of interest ends up
being $\mathrm{H}^2(G,\Lie(G,I))$ where
$\Lie(G,I):=\Lie G\otimes\VV(I^\smallvee)$.
The latter cohomology group is meaningful also in the theory
of group extensions, where it is known to classify isomorphism
classes of extensions of~$G$ by $\Lie(G,I)$ viewed as a
$G$-module via the adjoint representation, see
\cite[chap.~II, \S~3, no~3.3 and III, \S~6, no~2.1]{DG70}.

In this paper, our aim is to give a direct algebro-geometric
construction of this correspondence between deformations
and group extensions.
Our main result is that the Weil restriction
functor $h_*$ provides such a construction. Thereby, we obtain
a categorification of a link that has been available only as a
bijection between sets of isomorphism classes. This improvement
is crucial for a better understanding of $k[I]$-group schemes,
since in applications most groups occur as kernels or quotients
of morphisms. We illustrate this by giving a Dieudonn\'e-type
theory for unipotent group schemes. Natural extensions
of our result to more general thickenings
of the base, or to non-smooth group schemes, would have further
interesting applications. Since we wish to show our main results
to the reader without further delay, we postpone the discussion
of these applications to~\ref{further devpmts} below.
\end{item-title}

\begin{item-title}{Results} \label{item:results}
Our main result is an equivalence between the category of smooth,
affine $k[I]$-group schemes and a certain category of extensions
of $k$-group schemes. However, for reasons that are discussed below,
it is convenient for us to work with group schemes slightly more
general than smooth ones.
We say that a morphism of schemes $X\to S$ is
{\em differentially flat} if both $X$ and $\Omega^1_{X/S}$
are flat over $S$. Examples of differentially flat group schemes
include smooth group schemes, pullbacks from the spectrum
of a field, Tate-Oort group schemes with parameter $a=0$ in
characteristic~$p$, flat vector bundles i.e. $\VV(\mathscr{F})$
with~$\mathscr{F}$ flat over the base, and truncated Barsotti-Tate
groups of level~$n$ over a base where $p^n=0$
\cite[2.2.1]{Il85}. If $\mathscr{G}$ is a $k[I]$-group scheme, we call
{\em rigidification} of $\mathscr{G}$ an isomorphism of $k[I]$-schemes
$\sigma:h^*\mathscr{G}_k\isomto \mathscr{G}$ that lifts the identity of the
scheme $\mathscr{G}_k:=i^*\mathscr{G}$. Such a map need not be a morphism of
group schemes. We say that $\mathscr{G}$ is {\em rigid} if it admits
a rigidification. Examples of rigid group schemes include
smooth group schemes, pullbacks from $\Spec(k)$, and groups
of multiplicative type.  Finally a {\em deformation} of a
flat $k$-group scheme~$G$ over $k[I]$ is a pair composed of
a flat $k[I]$-group scheme $\mathscr{G}$ and an isomorphism
of $k$-group schemes $\mathscr{G}_k\isomto G$.

Let $\Gr\!/k[I]$ be the category of affine, differentially
flat, rigid $k[I]$-group schemes (this includes all smooth
affine $k[I]$-group schemes). The morphisms in this
category are the morphisms of $k[I]$-group schemes.
Let $\Ext(I)/k$ be the category of extensions of the form
$1\to \Lie(G,I) \to E \to G \to 1$ where~$G$ is an affine
differentially flat $k$-group scheme, and ``extension'' means
that $E\to G$ is an fpqc torsor under $\Lie(G,I)$. The
morphisms in this category are the commutative diagrams:
\[
\xymatrix{
1 \ar[r] & \Lie(G,I) \ar[r] \ar[d]^{\diff\!\psi}
& E \ar[r] \ar[d]^{\varphi} & G \ar[r] \ar[d]^{\psi} & 1\: \\
1 \ar[r] & \Lie(G',I) \ar[r] & E' \ar[r] & G' \ar[r] & 1}
\]
where $\diff\!\psi=\Lie(\psi)$ is the differential of $\psi$.
Usually such a morphism will be denoted simply
$\varphi:E\to E'$.

The categories $\Gr\!/k[I]$ and $\Ext(I)/k$ are exact categories.
They are also fpqc stacks over $\Spec(k)$ equipped with the
structure of {\em $\OO_k$-module stacks fibred in groupoids
over $\Gr/k$}. This means that there exist notions of {\em sum}
and {\em scalar multiple} for objects of $\Gr\!/k[I]$ and
$\Ext(I)/k$ (for extensions, the sum is the Baer sum); these
structures are described
in~\ref{subsection:the O-module stack structures}.
We can now state our main result.

\begin{theointro}[see~\ref{main_theorem}]
\label{main_theorem_0}
{\rm (1)} The Weil restriction functor provides an equivalence:
\[
h_*:\Gr\!/k[I] \isomto \Ext(I)/k.
\]
This equivalence commutes with base changes on $\Spec(k)$.

\smallskip

\noindent {\rm (2)} If $1\to\mathscr{G}'\to\mathscr{G}\to\mathscr{G}''\to 1$ is an
exact sequence in $\Gr\!/k[I]$, then
$1\to h_*\mathscr{G}'\to h_*\mathscr{G}\to h_*\mathscr{G}''$ is exact in $\Ext(I)/k$.
If moreover $\mathscr{G}'$ is smooth then
$1\to h_*\mathscr{G}'\to h_*\mathscr{G}\to h_*\mathscr{G}''\to 1$ is exact. In particular,
$h_*$ is an exact equivalence between the subcategories
of smooth objects endowed with their natural exact structure.

\smallskip

\noindent {\rm (3)} The equivalence $h_*$ is a morphism
of $\OO_k$-module stacks fibred over $\Gr\!/k$, i.e.
it transforms
the addition and scalar multiplication of deformations
of a fixed $G\in\Gr\!/k$ into the Baer sum and scalar
multiplication of extensions.

\smallskip

\noindent {\rm (4)} Let $P$ be one of the properties of
group schemes over a field: of finite type, smooth, connected,
unipotent, split unipotent, solvable, commutative. Say that
a group scheme over an arbitrary ring {\em has property~$P$} if
it is flat and its fibres have $P$. Then $\mathscr{G}\in \Gr\!/k[I]$
has property $P$ if and only if the $k$-group scheme $E=h_*\mathscr{G}$
has $P$.
\end{theointro}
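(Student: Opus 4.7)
The plan is to treat the four assertions in order, with (1) being the main technical point and (2)--(4) following from the equivalence by short arguments. For the functor $h_*:\Gr\!/k[I]\to\Ext(I)/k$, the ring surjection $R[I]\onto R$ induces, for every $k$-algebra $R$, a group homomorphism $p:(h_*\mathscr{G})(R) = \mathscr{G}(R[I])\to \mathscr{G}_k(R)$. Its kernel consists of lifts of the unit section, which by the classical description of tangent spaces of affine schemes identifies with $\Lie(\mathscr{G}_k)\otimes I = \Lie(\mathscr{G}_k,I)$. The rigidity hypothesis makes $p$ an fpqc torsor: locally on $\Spec(k)$, a rigidification $h^*\mathscr{G}_k\isomto\mathscr{G}$ yields a set-theoretic section. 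Thus $h_*\mathscr{G}\to \mathscr{G}_k$ is canonically an object of $\Ext(I)/k$ and functoriality is automatic.

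To produce a quasi-inverse, I would construct a \emph{Weil extension} functor $W:\Ext(I)/k\to \Gr\!/k[I]$ that requires no choice of rigidification. The idea is to exploit the group algebra scheme of $G$ introduced earlier in the paper: an extension $1\to \Lie(G,I)\to E\to G\to 1$ encodes a $2$-cocycle with values in $\Lie(G,I)$ relative to this group algebra scheme, and this cocycle is precisely the datum needed to twist the naive group law on $h^*G$ into a new $k[I]$-group law. Differential flatness ensures that the relevant modules remain flat under base change so that fpqc descent applies, yielding an object $W(E)\in\Gr\!/k[I]$ independent of auxiliary local trivialisations. I expect \emph{this step to be the main obstacle}: verifying that the descent data genuinely produce a group law, and that $W$ is functorial and canonical, is the technical heart of the theorem. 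With $W$ in hand, the natural isomorphisms $h_*\circ W\simeq\id$ and $W\circ h_*\simeq\id$ reduce to a comparison of cocycles, which is essentially tautological; compatibility with base change is visible on both constructions.

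For (2), Weil restriction is a right adjoint and hence commutes with limits, giving left exactness. When $\mathscr{G}'$ is smooth, a section of $\mathscr{G}''(R[I])$ whose image in $\mathscr{G}''_k(R)$ lifts to $\mathscr{G}_k(R)$ can be further lifted to $\mathscr{G}(R[I])$ fpqc-locally on $R$, because the obstruction sits in an fpqc torsor under the smooth group $\mathscr{G}'$. For (3), after unwinding the module stack structures recalled in~\ref{subsection:the O-module stack structures}, the sum of two deformations is obtained by pullback along the diagonal $k[I]\to k[I\oplus I]$ followed by pushout along the addition $I\oplus I\to I$; applying the base-change-compatible equivalence of (1) transports this recipe verbatim to the category of extensions, where it manifestly becomes the Baer sum. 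Scalar multiplication is handled identically.

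For (4), each property is defined fibrewise, so I may base change and assume $k$ is a field. By (1), $E = h_*\mathscr{G}$ fits in an extension $1\to \Lie(G,I)\to E\to G\to 1$ with $\Lie(G,I)$ a vector group. Being of finite type, smooth, connected, unipotent, split unipotent, or solvable is stable both under extension by, and under quotient by, such a vector group, which gives the biconditional $\mathscr{G}$ has $P$ iff $E$ has $P$ in each case; commutativity is preserved in both directions by the equivalence of (1) because $h_*$ and its quasi-inverse $W$ commute with products and hence preserve commutative group objects. Affineness is preserved since Weil restriction along an affine morphism of affine schemes is affine.
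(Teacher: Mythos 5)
The main gap is in part (1): everything that makes the theorem hard is deferred rather than proved. Your plan for the quasi-inverse (choose a section of $E\to G$, extract a $2$-cocycle $c$ with values in $\Lie(G,I)$, twist the group law on $h^*G$) is indeed the right intuition -- it is essentially Proposition~\ref{rigid defos of affine groups} of the paper -- but you acknowledge without resolving exactly the points where the work lies: why the resulting object is canonical (independent of the section), why a morphism of extensions induces a morphism of the twisted $k[I]$-groups, and why the two composites are isomorphic to the identity. Your appeal to fpqc descent and ``auxiliary local trivialisations'' is also off target: a global section of $E\to G$ exists because the base is affine (vanishing of quasi-coherent $\mathrm{H}^1$), so no descent is involved; the genuine issues are canonicity, normality and functoriality, which the paper handles by defining the canonical subgroup $K_{-1}(E)\subset h^*E$ via the exponential (Propositions~\ref{prop:sous-groupe Klambda(E_c)} and~\ref{prop:subgroup K}) and setting $h^{\mbox{\rm\tiny +}}E=h^*E/K_{-1}$. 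Moreover your claim that the isomorphisms $h_*\circ W\simeq\id$ and $W\circ h_*\simeq\id$ ``reduce to a comparison of cocycles, which is essentially tautological'' is not correct: identifying the cocycle of the extension $h_*\mathscr{G}$ with the cocycle describing the deformation $\mathscr{G}$ requires the $\Lie(\mathscr{G}_k,I)$-equivariance of $h_*\sigma$ (Proposition~\ref{prop:sigma is Lie-equivariant}), whose proof is a nontrivial group-algebra and flatness argument (the two-square-zero-ideal trick), precisely because the section of $h_*\mathscr{G}\to\mathscr{G}_k$ induced by a rigidification is not a group homomorphism.

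Part (3) also has a real gap. You assert that, since the equivalence commutes with base change on $\Spec(k)$, the recipe for adding deformations ``transports verbatim'' and ``manifestly becomes the Baer sum''. But the base-change compatibility in (1) concerns changes of the base ring $k$, whereas the sum of deformations passes through the coproduct $\mathscr{G}_1\amalg_G\mathscr{G}_2$ over $k[I\oplus I]$ and the non-flat pullback $j^*$ along the addition $I\oplus I\to I$; neither operation is covered by that compatibility. One must actually prove that $\ell_*(\mathscr{G}_1\amalg_G\mathscr{G}_2)\simeq h_{1,*}\mathscr{G}_1\times_G h_{2,*}\mathscr{G}_2$ and that $h_*(j^*\mathscr{G}')$ is the pushout of this fibre product along $+:L\times L\to L$, which is the content of Lemma~\ref{lemma:compatibility with addition} (and~\ref{lemma:compatibility with scal mult} for scalars), proved by an explicit computation with $I$-compatible maps; your argument skips this entirely. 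By contrast, your treatments of (2) (left exactness from the adjunction plus infinitesimal lifting along the square-zero thickening when $\mathscr{G}'$ is smooth, versus the paper's $3\times 3$ diagram chase) and of (4) (fibrewise reduction, stability of $P$ under extension by and quotient by a vector group, Eckmann--Hilton-style transport of commutativity) are essentially sound and close in spirit to the paper.
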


In order to show that $h_*$ is an equivalence, we build
a quasi-inverse $h^{\mbox{\rm\tiny +}}$ which we call
{\em Weil extension}. The construction and study of this
functor is the hardest part of the proof.

As an application of the Theorem, we prove a Dieudonn\'e
classification for smooth, commutative, unipotent group
schemes over the generalized dual numbers of a perfect
field $k$. This takes the form of an
exact equivalence of categories with a category of
extensions of smooth, erasable Dieudonn\'e modules.
Here is the precise statement (we refer to
Section~\ref{section:application} for
the definition of all undefined terms).

\begin{theointro}[see~\ref{theorem:Dieudonne for SCU}] \label{main_theorem_1}
Let $\SCU/k[I]$ be the category of smooth, commutative,
unipotent $k[I]$-group schemes. Let
$\DD\mbox{-}I\mbox{-}\Mod$ be the category of $I$-extensions
of smooth erasable Dieudonn\'e modules. Then the Dieudonn\'e
functor $\underline M:\CU/k \too \DD\mbox{-}\Mod$
induces a contravariant equivalence of categories:
\[
\mathscr{M}:\SCU/k[I] \too \DD\mbox{-}I\mbox{-}\Mod
\]
that sends $\mathscr{U}$ to the extension
$0 \to \underline M(\mathscr{U}_k)\to \underline M(h_*\mathscr{U}))\to
\underline M(\Lie(\mathscr{U}_k,I))\to 0$.
\end{theointro}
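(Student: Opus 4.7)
The plan is to deduce Theorem~B from Theorem~A by composing the Weil restriction equivalence with the classical contravariant Dieudonn\'e equivalence over the perfect field~$k$, applied termwise to the canonical extension attached to every object of~$\Ext(I)/k$.

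First I would restrict $h_*\colon \Gr\!/k[I]\isomto \Ext(I)/k$ to smooth, commutative, unipotent objects. Applying part~(4) of Theorem~A to each of the three properties \emph{smooth}, \emph{commutative}, \emph{unipotent} in turn, one has $\mathscr{U}\in \SCU/k[I]$ if and only if $h_*\mathscr{U}\in \SCU/k$. By the very definition of $\Ext(I)/k$, such a $\mathscr{U}$ then gives rise to a canonical fpqc-exact sequence
\[
0 \to \Lie(\mathscr{U}_k,I) \to h_*\mathscr{U} \to \mathscr{U}_k \to 0
\]
of objects of $\SCU/k$, the left term being SCU because it is the vector group $\Lie(\mathscr{U}_k)\otimes_k\VV(I^\smallvee)$.

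Next I would apply the classical Dieudonn\'e functor termwise. Over a perfect field, $\underline M\colon \CU/k\to \DD\mbox{-}\Mod$ is a contravariant exact equivalence whose restriction to $\SCU/k$ is equivalent to the category of smooth erasable Dieudonn\'e modules. Applied to the sequence above it yields a short exact sequence
\[
0 \to \underline M(\mathscr{U}_k) \to \underline M(h_*\mathscr{U}) \to \underline M(\Lie(\mathscr{U}_k,I)) \to 0
\]
of smooth erasable Dieudonn\'e modules, which is by prescription~$\mathscr{M}(\mathscr{U})$. Functoriality in $\mathscr{U}$ is inherited from $h_*$ and~$\underline M$.

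The decisive calculation is to identify the essential image of $\mathscr{M}$ with the whole of $\DD\mbox{-}I\mbox{-}\Mod$. By the definition given in Section~\ref{section:application}, an $I$-extension is a short exact sequence $0\to M\to N\to M''\to 0$ of smooth erasable Dieudonn\'e modules in which the quotient $M''$ is a prescribed ``$I$-twist'' of $M$. I would check, by a direct computation on the vector group $\Lie(G)\otimes_k\VV(I^\smallvee)$ and its Dieudonn\'e module, that $\underline M(\Lie(G,I))$ is exactly this $I$-twist of $\underline M(G)$, functorially in $G$ and in~$I$; this guarantees that $\mathscr{M}$ indeed lands in $\DD\mbox{-}I\mbox{-}\Mod$.

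Finally, that $\mathscr{M}$ is an equivalence follows formally. Full faithfulness is inherited from the equivalences $h_*$ and $\underline M$, once one observes that morphisms in $\DD\mbox{-}I\mbox{-}\Mod$ and in $\Ext(I)/k$ are both defined as morphisms of short exact sequences compatible with the prescribed twist. For essential surjectivity, given any $I$-extension $0\to M\to N\to M''\to 0$, I invert $\underline M$ termwise to obtain an extension of SCU $k$-groups $0\to \Lie(G,I)\to E\to G\to 0$, and then apply the Weil extension $h^{\mbox{\rm\tiny +}}$ provided by Theorem~A to produce a $\mathscr{U}\in\SCU/k[I]$ with $\mathscr{M}(\mathscr{U})$ the given extension. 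I expect the main obstacle to lie precisely in the identification of $\underline M(\Lie(G,I))$ with the formal $I$-twist used to define $\DD\mbox{-}I\mbox{-}\Mod$, and in matching morphisms on both sides; once that bookkeeping is complete, the theorem is a formal consequence of Theorem~A and classical Dieudonn\'e theory.
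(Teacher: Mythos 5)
Your proposal follows essentially the same route as the paper's proof: restrict the Weil restriction equivalence of Theorem~\ref{main_theorem_0} to smooth, commutative, unipotent objects via part~(4), apply the classical Dieudonn\'e equivalence of Theorem~\ref{theo: Dieudonne modules} termwise to the extension $0\to\Lie(\mathscr{U}_k,I)\to h_*\mathscr{U}\to\mathscr{U}_k\to 0$, and conclude formally, with quasi-inverse given by $h^{\mbox{\rm\tiny +}}$ composed with $\underline U$. The identification you single out as the main obstacle is exactly Proposition~\ref{prop:M commutes with Lie}; note only that its proof needs more than a computation on the vector group $\Lie(G,I)$ itself, namely the functorial isomorphism $\underline M(G)/\FF\,\underline M(G)\isomto (\Lie G)(k)^\smallvee$, which the paper obtains from Fontaine's comparison applied to the Frobenius kernel of $G$.
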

\end{item-title}

\begin{item-title}{Comments}
An important tool in many of our arguments is the
{\em group algebra scheme}. It provides a common framework
to conduct computations in the groups and their tangent bundles
simultaneously. It allows us to describe conveniently the
Weil restriction of a group scheme, and is essential in the
proof of Theorem~\ref{main_theorem_0}.
Since we are not aware of any treatment of the group algebra
scheme in the literature, we give a detailed treatment
in Section~\ref{group algebra}. We point out that this concept
is useful in other situations; in particular it allows to work
out easily the deformation theory of smooth affine group schemes,
as we show in Subsection~\ref{subsection:defos}.

Let us say a word on the assumptions.
The choice to work with differentially flat group schemes
instead of simply smooth ones is not just motivated by the
search for maximal generality or aesthetic reasons. It is also
extremely useful because when working with an affine,
smooth group scheme $G$, we use our results also
for the group algebra $\OO_k[G]$ in the course of proving
the main theorem; and the group algebra
$\OO_k[G]$ is differentially flat and rigid, but usually
infinite-dimensional and hence not smooth.

There are at least two advantages to work over generalized dual
numbers $k[I]$ rather than simply the usual ring $k[\eps]$
with $\eps^2=0$. The first is that in order to prove that our
equivalence of categories respects the $\OO_k$-module stack
structure, we have to introduce the ring $k[I]$ with the
two-dimensional $k$-module $I=k\eps+k\eps'$. Indeed, this is
needed to describe the sum of deformations and the Baer sum
of extensions. The other advantage is that since arbitrary local
Artin $k$-algebras are filtered by Artin $k$-algebras whose
maximal ideal has square zero, our results may be useful in
handling deformations along more general thickenings.
\end{item-title}

\begin{item-title}{Further developments}
\label{further devpmts}
Our results have several desirable generalizations. Here are the two
most natural directions. First, one may wish to relax the assumptions
on the group schemes and consider non-affine or non-smooth group
schemes; second one may wish to consider more general thickenings
than that given by the dual numbers. Let us explain how our personal
work indicates a specific axis for research. In previous work
of the authors with Ariane M\'ezard~\cite{MRT13}, we studied models
of the group schemes of roots of unity $\mu_{p^n}$ over $p$-adic rings.
As a result, we raised a conjecture which says in essence that every
such model can be equipped with a cohomological theory that
generalizes the Kummer theory available on the generic fibre.
In the process of trying to
prove the conjecture, we encountered various character groups of
smooth and finite unipotent group schemes over truncated discrete
valuation rings. In order to compute these, it is therefore desirable
to obtain a statement similar to Theorem~\ref{main_theorem_0}
in this context. The present paper can be seen as
the first part of this programme, carried out in the simplest case;
we plan to realize the second part of the programme by using the
{\em derived Weil restriction} or {\em derived Greenberg functor}
in place of the usual Weil restriction.
\end{item-title}

\begin{item-title}{Plan of the article}
The present Section~\ref{section:introduction} ends with
material of preliminary nature on the description of the
$\OO_k$-module stack structure of the categories $\Gr\!/k[I]$
and $\Ext(I)/k$ and on Weil restriction.
In Section~\ref{group algebra} we introduce group
algebra schemes. In Section~\ref{section:Weil restriction} we
describe the functor $h_*:\Gr\!/k[I] \to \Ext(I)/k$, in
Section~\ref{section:functor tit} we construct a functor
$h^{\mbox{\rm\tiny +}}:\Ext(I)/k\to \Gr\!/k[I]$, while in
Section~\ref{section:eq of cats} we prove that these
functors are quasi-inverse and we complete the proof
of Theorem~\ref{main_theorem_0}. Finally, in
Section~\ref{section:application} we derive the Dieudonn\'e
classification for smooth commutative unipotent group schemes
over the dual numbers.  In the Appendices, we review notions
from differential calculus (tangent bundle, Lie algebra and
exponentials) and module categories (Picard categories with
scalar multiplication) in the level of generality
needed in the paper.
\end{item-title}

\begin{item-title}{Acknowledgements}
For various discussions and remarks, we thank Sylvain Brochard,
Xavier Caruso, Brian Conrad, Bernard Le Stum, Brian Osserman,
and Tobias Schmidt. We acknowledge the help of the referee
to make the article much more incisive. We are also grateful
to the CIRM in Luminy where part of this
research was done. Finally, the first author would like to thank
the executive and administrative staff of IRMAR and of the Centre
Henri Lebesgue ANR-11-LABX-0020-01 for creating an attractive mathematical environment.
\end{item-title}


\subsection{The $\OO_k$-module stack structure of
$\Gr\!/k[I]$ and $\Ext(I)/k$}
\label{subsection:the O-module stack structures}

Both categories $\Gr\!/k[I]$ and $\Ext(I)/k$ are endowed with
the structure of {\em $\OO_k$-module stacks in groupoids over
$\Gr\!/k$}. The reader who wishes to see the full-fledged
definition is invited to read
Appendix~\ref{appendix:module groupoids}.
In rough terms, once a $k$-group scheme $G$ is fixed, the
$\OO_k$-module category structure boils down to an addition law
by which one can add deformations (resp. extensions) of $G$,
and an external law by which one can multiply a deformation
(resp. an extension) by scalars of the ring scheme $\OO_k$.
Here is a description of these structures.

\begin{item-title}{The $\OO_k$-module stack
$\Gr\!/k[I]\to \Gr\!/k$}
Let $G\in \Gr\!/k$ be fixed.
Let $\mathscr{G}_1,\mathscr{G}_2\in\Gr\!/k[I]$ with identifications
$i^*\mathscr{G}_1\simeq G\simeq i^*\mathscr{G}_2$. The addition is obtained
by a two-step process. First we glue these group schemes along
their common closed subscheme $G$:
\[
\mathscr{G}':=\mathscr{G}_1\amalg_G\mathscr{G}_2.
\]
This lives over the scheme
$\Spec(k[I])\times_{\Spec(k)}\Spec(k[I])=\Spec(k[I\oplus I])$.
The properties of gluing along infinitesimal thickenings are
studied in the Stacks Project~\cite{SP}. We point out some
statements relevant to our situation:
existence of the coproduct in
\href{https://stacks.math.columbia.edu/tag/07RV}{Tag~07RV},
a list of properties preserved by gluing in
\href{https://stacks.math.columbia.edu/tag/07RX}{Tag~07RX},
gluing of modules in
\href{https://stacks.math.columbia.edu/tag/08KU}{Tag~08KU}
and preservation of flatness of modules in
\href{https://stacks.math.columbia.edu/tag/07RW}{Tag~07RW}.
It follows from these results that $\mathscr{G}'$ is an object of
$\Gr\!/k[I\oplus I]$. Then we form the desired sum
\[
\mathscr{G}_1+\mathscr{G}_2:=j^*(\mathscr{G}')=j^*(\mathscr{G}_1\amalg_G\mathscr{G}_2)
\]
by pullback along the closed immersion
\[
j:\Spec(k[I]) \intoo \Spec(k[I\oplus I])
\]
induced by the addition morphism $I\oplus I\to I$,
$i_1\oplus i_2\mapsto i_1+i_2$.
The neutral element for this addition is the group scheme
$h^*G$. The scalar multiple
\[
\lambda\mathscr{G}:=s_\lambda^*\mathscr{G}
\]
is given by rescaling using the scheme map
$s_\lambda:\Spec(k[I])\to\Spec(k[I])$ induced by the
$k$-algebra map $k[I]\to k[I]$ which is multiplication by
$\lambda$ in $I$. The axioms of $\OO_k$-module stacks can be
checked but we leave the details to the courageous reader.
\end{item-title}

\begin{item-title}{The $\OO_k$-module stack
$\Ext(I)/k\to \Gr\!/k$}
Again let $G\in \Gr\!/k$ be fixed and set $L:=\Lie(G,I)$.
Let $E_1,E_2\in\cE xt(G,L)$ be two extensions.
Their addition is given by Baer sum; here again this is
a two-step process. Namely, we
first form the fibre product $E'=E_1\times_G E_2$ which
is an extension of $G$ by $L\times L$. Then the Baer sum
is the pushforward of this extension along the addition
map $+:L\times L\to L$. All in all, we have the following
diagram which serves as a definition of $E_1+E_2$:
\[
\xymatrix{
1 \ar[r] & L\times L \ar[r] \ar[d]_-{+}
\ar@{}[rd]|{\mbox{\LARGE$\ulcorner$}\ }
& E' \ar[r] \ar[d] & G \ar[r] \ar@{=}[d] & 1 \\
1 \ar[r] & L \ar[r] & E_1+E_2 \ar[r] & G \ar[r] & 1.
}
\]
Explicitly, the underlying group scheme of the Baer
sum is given by $E_1+E_2=E'/M$ where
$$M=\ker(L\times L\to L)=\{(x,-x), x\in L\}.$$ The neutral
element for this addition is the trivial extension
$E=L\rtimes G$. Even though
this is not emphasized in the literature, the usual proofs
of the fact that the set of extensions endowed with the
Baer sum operation is an abelian group provide explicit
associativity and commutativity constraints proving that
$\cE xt(G,L)$ is a Picard category. The associativity
constraint is obtained by expressing
$(E_1+E_2)+E_3$ and $E_1+(E_2+E_3)$ as isomorphic quotients
of $E_1\times_G E_2\times_G E_3$, and the commutativity
constraint is obtained from the flipping morphism in
$E_1\times_G E_2$. The scalar multiplication
by $\lambda\in k$ is given by pushforward along the
multiplication-by-$\lambda$ morphism in the module scheme
$L=\Lie(G,I)$. All in all, we have the following diagram
which serves as a definition of $\lambda E$:
\[
\xymatrix{
1 \ar[r] & L \ar[r] \ar[d]_-{\lambda}
\ar@{}[rd]|{\mbox{\LARGE$\ulcorner$}}
& E \ar[r] \ar[d] & G \ar[r] \ar@{=}[d] & 1 \\
1 \ar[r] & L \ar[r] & \lambda E \ar[r] & G \ar[r] & 1.
}
\]
Again, the verification of the axioms of an $\OO_k$-module
stack is tedious but not difficult.
\end{item-title}

\subsection{Weil restriction generalities}
\label{subsection:Weil restriction}

We briefly give the main definitions and notations related to
Weil restriction; we refer to \cite[\S~7.6]{BLR90} for
more details. Let $h:\Spec(k')\to\Spec(k)$ be a finite,
locally free morphism of affine schemes. Let $(\Sch/k)$ be the
category of $k$-schemes and $(\Fun/k)$ the category of
functors $(\Sch/k)^\circ\to(\Sets)$. The Yoneda functor
embeds the former category into the latter. By sending a
morphism of functors $f:X'\to\Spec(k')$ to the morphism
$h\circ f:X'\to \Spec(k)$ we obtain a functor
$h_!:(\Fun/k')\to (\Fun/k)$. Sometimes we will refer to $h_!X'$
as {\em the $k'$-functor $X'$ viewed as a $k$-functor} and the
notation $h_!$ will be omitted. The pullback functor
$h^*:(\Fun/k)\to (\Fun/k')$ is right adjoint to $h_!$, in particular
$(h^*X)(S')=X(h_!S')$ for all $k'$-schemes~$S'$. The Weil
restriction functor $h_*:(\Fun/k')\to (\Fun/k)$ is right adjoint
to $h^*$, in particular we have $(h_*X')(S)=X'(h^*S)$ for all $k$-schemes
$S$. Thus we have a sequence of adjoint functors:
\[
h_!,h^*,h_*.
\]
The functors $h_!$ and $h^*$ preserve the subcategories of schemes.
The same is true for $h_*$ if $h$ is radicial, a case which covers
our needs (see \cite[\S~7.6]{BLR90} for refined representability
results). We write
\[
\alpha:1\too h_*h^*
\quad\mbox{and}\quad
\beta:h^*h_*\too 1
\]
for the unit and counit of the $(h^*,h_*)$-adjunction.
If $X$ is a separated $k$-scheme then $\alpha_X:X\to h_*h^*X$ is a
closed immersion. If $X'$ is a $k'$-group (resp. algebra)
functor (resp. scheme), then also $h_*X'$ is a $k$-group
(resp. algebra) functor (resp. scheme). If moreover
$X'\to \Spec(k')$ is smooth of relative dimension~$n$, then
$h_*X'\to \Spec(k)$ is smooth of relative dimension $n[k':k]$
where $[k':k]$ is the locally constant rank of $h$.
Quite often, it is simpler to consider functors defined on the
subcategory of affine schemes; the functors $h_!$, $h^*$, $h_*$
are defined similarly in this context.

\section{Group algebras of group schemes}
\label{group algebra}

Let $G$ be an affine $k$-group scheme. In this subsection, we
explain the construction of the group algebra $\OO_k[G]$,
which is the analogue in the setting of algebraic geometry
of the usual group algebra of abstract discrete groups.
Note that for a finite constant group scheme, the set $\OO_k[G](k)$
of $k$-rational points of the group algebra and the usual
group algebra $k[G]$ are isomorphic, but for other groups they
do not have much in common in general; this will be emphasized
below.
Since we are not aware of any appearance of the group algebra
in the literature, we give a somewhat detailed treatment,
including examples
(\ref{example:group algebra of finite groups}--\ref{examples:multiplicative group}),
basic properties (\ref{prop:group algebra}) and
the universal property
(\ref{theo:univ property gp algebra}).

\subsection{Linear algebra schemes}
\label{subsection:module points}

\begin{item-title}{Vector bundles}
\label{vector bundle envelope}
Let $S=\Spec(k)$.
As in \cite{EGA1-new}, we call {\em vector bundle}
an $\OO_k$-module scheme of the form $\VV(\mathscr{F})=\Spec \Sym(\mathscr{F})$
where $\mathscr{F}$ is a quasi-coherent $\cO_S$-module and $\Sym(\mathscr{F})$
is its symmetric algebra. We say {\em smooth vector bundle}
if $\mathscr{F}$ is locally free of finite rank.
If $f:X\to S$ is quasi-compact and quasi-separated, we set
$\VV(X/S):=\VV(f_*\cO_X)$ so
$\VV(X/S)(T)=\Hom_{\cO_T\mbox{-}\Mod}((f_*\cO_X)_T,\cO_T)$
for all $S$-schemes $T$. There is a canonical $S$-morphism
$\nu_X:X\too\VV(X/S)$
which is initial among all $S$-morphisms from $X$
to a vector bundle. We call it the {\em vector bundle envelope}
of $X/S$. If $X$ is affine over~$S$, the map $\nu_X$ is a closed
immersion because it is induced by the surjective morphism of
algebras $\Sym(f_*\cO_X)\too f_*\cO_X$ induced by the identity
$f_*\cO_X\too f_*\cO_X$.
\end{item-title}

\begin{item-title}{Base change and ring scheme}
\label{defs:base ring scheme}
If $h:\Spec(k')\to\Spec(k)$ is a morphism
of affine schemes, there is a morphism of $k'$-ring schemes
$h^*\OO_k\to\OO_{k'}$ which is the identity on points, hence
an isomorphism of ring schemes. However, whereas the target
has a natural structure of $\OO_{k'}$-algebra, the source
does not. For this reason, the pullback of module functors
or module schemes along $h$ is defined as
$M\mapsto h^*M\otimes_{h^*\OO_k}\OO_{k'}$, as
is familiar for the pullback of modules on ringed spaces.
Usually we will write simply $h^*M$.

For later use, we give some complements on the case where
$k'=k[I]$ is the ring of generalized dual numbers, for some
finite free $k$-module $I$.
We will identify $I$ and its dual $I^\smallvee=\Hom_k(I,k)$
with the coherent $\cO_{\Spec(k)}$-modules they define,
thus we have the vector bundle $\VV(I^\smallvee)$.
For each $k$-algebra $R$, we have $R[I]=R\oplus I\otimes_k R$
where $IR\simeq I\otimes_k R$ has square~0.
This decomposition functorial in $R$ gives rise to a direct
sum decomposition of $\OO_{k}$-module schemes:
\[
h_*\OO_{k[I]}=\OO_k\oplus \VV(I^\smallvee).
\]
It is natural to use the notation $\OO_k[\VV(I^\smallvee)]$
for the $\OO_k$-algebra scheme on the right-hand side,
however we will write more
simply $\OO_k[I]$. Now we move up on $\Spec(k[I])$, where
there is a morphism of $\OO_{k[I]}$-module
schemes $h^*\VV(I^\smallvee)\to\OO_{k[I]}$ defined for all
$k[I]$-algebras $R$ as the morphism $I\otimes_k R\to R$,
$i\otimes x\mapsto i_Rx$ where $i_R:=i1_R$. According to
what we said before, the pullback $h^*\VV(I^\smallvee)$ has the
module structure such that a section $a\in\OO_{k[I]}$ acts by
$a\cdot i\otimes x:=i\otimes ax$. The image of
$h^*\VV(I^\smallvee)\to\OO_{k[I]}$ is the ideal
$I\!\cdot\!\OO_{k[I]}$ defined by $(I\!\cdot\!\OO_{k[I]})(R)=IR$
for all $k[I]$-algebras $R$. There is an exact sequence
of $\OO_{k[I]}$-module schemes:
\[
0\too I\!\cdot\!\OO_{k[I]}\too \OO_{k[I]}\too i_*\OO_k\too 0.
\]
\end{item-title}

\begin{item-title}{$\OO_k$-Algebra schemes}
\label{defs:algebra schemes}
Here we define a category $(\OO_k\mbox{-}\Alg)$ of
{\em linear $\OO_k$-algebra schemes} and give a summary
of elementary properties.
For us an {\em $\OO_k$-algebra scheme} is a $k$-scheme $D$
endowed with two internal composition laws
$+,\times:D\times D\to D$ called addition and multiplication
possessing two neutral sections $0,1:\Spec(k)\to D$, and an external
law $\cdot:\OO_k\times D\to D$, such that for each $k$-algebra $R$
the tuple $(D(R),+,\times,0,1,\cdot)$ is an associative unitary
$R$-algebra. In particular $(D,+,0)$ is a commutative group
scheme, and $(D(R),\times,1)$ is a (possibly noncommutative)
monoid. We say that $D$ is a {\em linear $\OO_k$-algebra scheme}
if its underlying $\OO_k$-module scheme is a vector bundle.
In this case $\mathscr{F}$ can be recovered as the ``dual bundle''
sheaf $\mathscr{F}=\Hom_{\OO_k\mbox{-}\Mod}(D,\OO_k)$, the
Zariski sheaf over $S$ whose sections over an open~$U$ are the
morphisms of ${\OO_k}_{|U}$-modules $D_{|U}\to{\OO_k}_{|U}$
(read~\cite[9.4.9]{EGA1-new} between the lines). For an affine
$\OO_k$-algebra scheme in our sense, the comultiplication
is a map $\Delta:\Sym(\mathscr{F})\to \Sym(\mathscr{F})\otimes \Sym(\mathscr{F})$
and the bilinearity of $m$ implies that this map is
induced from a map $\Delta_0:\mathscr{F}\to\mathscr{F}\otimes\mathscr{F}$. Finally
we point out two constructions on linear $\OO_k$-algebra schemes.
The first is the tensor product $D\otimes_{\OO_k}D'$, which
as a functor is defined as $R\mapsto D(R)\otimes_RD'(R)$.
If $D=\VV(\mathscr{F})$ and $D'=\VV(\mathscr{F}')$ with $\mathscr{F},\mathscr{F}'$ locally free
of finite ranks, then
$D\otimes_{\OO_k}D'\simeq\VV(\mathscr{F}\otimes\mathscr{F}')$.
The second construction is the group of units. We
 observe that for any linear $\OO_k$-algebra scheme~$D$,
the subfunctor $D^\times\subset D$ of (multiplicative) units
is the preimage under the multiplication $D\times D\to D$
of the unit section $1:\Spec(k)\to D$ and is therefore
representable by an affine scheme. This gives rise to the
{\em group of units} functor
$(\OO_k\mbox{-}\Alg)\to (k\mbox{-}\Gr)$, $D\mapsto D^\times$
where $(k\mbox{-}\Gr)$ is the category of affine $k$-group schemes.

\begin{remark} \label{remark:def is restrictive}
We do not know if an
$\OO_k$-algebra scheme whose underlying scheme is affine
over $S$ is always of the form $\VV(\mathscr{F})$.
\end{remark}
\end{item-title}

\subsection{Group algebra: construction and examples}
\label{construction of gp alg}

Let $G=\Spec(A)$ be an affine $k$-group scheme. We write
$(u,v)\mapsto u\star v$ or sometimes simply $(u,v)\mapsto uv$
the multiplication of $G$. This operation extends to the
vector bundle envelope $\VV(G/k)$, as follows.
Let $\Delta:A\to A\otimes_k A$ be the comultiplication.
For each $k$-algebra $R$, we have
$\VV(G/k)(R)=\Hom_{k\mbox{-}\Mod}(A,R)$.
If $u,v:A\to R$ are morphisms of $k$-modules, we consider the
composition $u\star v:=(u\otimes v)\circ\Delta$:
\[
u\star v:A \stackrel{\Delta}{\tooo} A\otimes_kA
\stackrel{u\otimes v}{\tooo} R.
\]
Here the map $u\otimes v:A\otimes_k A\to R$ is
$a\otimes b\mapsto u(a)v(b)$.

\begin{definition}
The {\em group algebra of the $k$-group scheme $G$}:
\[
\OO_k[G]:=(\VV(G/k),+,\star),
\]
is the vector bundle $\VV(G/k)$ endowed with the product just
defined. We write $\nu_G:G\into \OO_k[G]$ for the closed
immersion as in paragraph~\ref{vector bundle envelope}.
\end{definition}

\bigskip

We check below that $\OO_k[G]$ is a linear $\OO_k$-algebra scheme.
Apart from $G(R)$, there is another noteworthy subset inside
$\OO_k[G](R)$, namely the set $\Der_G(R)$ of $k$-module maps $d:A\to R$
which are $u$-derivations for some $k$-algebra map $u:A\to R$
(which need not be determined by~$d$); a more accurate notation
would be $\Der(\cO_G,\OO_k)(R)$ but we favour lightness.
Here are the first basic properties coming out of the construction.

\begin{prop} \label{prop:elem properties of group algebra}
Let $G$ be an affine $k$-group scheme. Let $\OO_k[G]$
and $\Der_G$ be as described above.
\begin{trivlist}
\itemn{1} The tuple $\OO_k[G]:=(\VV(G/k),+,\star)$ is
a linear $\OO_k$-algebra scheme.
\itemn{2} As a $k$-scheme, $\OO_k[G]$ is $k$-flat (resp. has
$k$-projective function ring) iff $G$ has the same property.
\itemn{3} The composition $G\into \OO_k[G]^\times\into\OO_k[G]$
is a closed embedding, hence also $G\into \OO_k[G]^\times$.
\itemn{4} The subfunctor $\Der_G\subset \OO_k[G]$ is stable
by multiplication by $G$ on the left and on the right, so
it acquires left and right $G$-actions.
More precisely, if $u,v:A\to R$ are maps of algebras,
$d:A\to R$ is a $u$-derivation and $d':A\to R$ is a
$v$-derivation, then $d\star v$ and $u\star d'$ are
$(u\star v)$-derivations.
\end{trivlist}
\end{prop}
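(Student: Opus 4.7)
My plan is to treat the four parts in succession, all by checking the corresponding statement on $R$-points for each $k$-algebra $R$. For (1), fix $R$ so that $\OO_k[G](R) = \Hom_{k\mbox{-}\Mod}(A, R)$ is an $R$-module by postcomposition on the target. I would verify associativity of $\star$ directly from coassociativity of $\Delta$, show that $\eta_R \circ \epsilon_A$ is a two-sided unit using the counit axiom $(\epsilon \otimes \id)\circ\Delta = \id = (\id \otimes \epsilon)\circ\Delta$, and observe that $R$-bilinearity is built in since $(u,v) \mapsto (u \otimes v) \circ \Delta$ is $R$-linear in each variable. The underlying $\OO_k$-module scheme $\VV(f_*\cO_G)$ is a vector bundle by construction, so $\OO_k[G]$ is linear in the sense of~\ref{defs:algebra schemes}.

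For (2), I would note that the function ring of $\OO_k[G] = \VV(A)$ is the symmetric algebra $\Sym_k(A)$. One direction is immediate: $A$ is a direct summand of $\Sym_k(A)$ as the degree-one component, so flatness (resp.\ projectivity) of $\Sym_k(A)$ descends to $A$. For the converse I would appeal to the standard facts that each $\Sym_k^n(-)$ preserves flatness (via reduction to the free case by filtered colimits) and projectivity (via direct summands of free modules), together with preservation under direct sums.

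For (3), the envelope $\nu_G: G \to \OO_k[G]$ is already a closed immersion by~\ref{vector bundle envelope}, so it remains to check that $\nu_G$ factors through $\OO_k[G]^\times$ and that the factorization is again a closed immersion. For the factorization, given $u: A \to R$ representing a point of $G(R)$, I would set $v := u \circ S$ with $S$ the antipode of $A$; the antipode identity $m_A \circ (\id \otimes S) \circ \Delta = \eta_A \circ \epsilon_A$, combined with the multiplicativity of $u$, yields $u \star v = v \star u = \eta_R \circ \epsilon_A$, the unit of $\OO_k[G](R)$. To transfer the closed-immersion property, I would invoke that $\OO_k[G]^\times \into \OO_k[G]$ is a monomorphism and that the pullback $G \times_{\OO_k[G]} \OO_k[G]^\times$ coincides with $G$, so that $G \into \OO_k[G]^\times$ arises from $\nu_G$ by base change.

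For (4), I would carry out a direct Sweedler computation. Writing $\Delta(a) = \sum a_{(1)} \otimes a_{(2)}$ and exploiting that $\Delta$, $u$, $v$ are algebra maps and that $d$ is a $u$-derivation, one expands
\[
(d \star v)(ab) \;=\; \sum \bigl[u(a_{(1)}) d(b_{(1)}) + d(a_{(1)}) u(b_{(1)})\bigr]\, v(a_{(2)}) v(b_{(2)})
\]
and regroups using commutativity of $R$ to obtain $(u \star v)(a)\,(d \star v)(b) + (d \star v)(a)\,(u \star v)(b)$. The case of $u \star d'$ is symmetric. The one mildly delicate point, I expect, will be the closed-immersion claim in~(3), since $\OO_k[G]^\times$ is defined via the equation $xy = 1$ in a pair rather than as an open subscheme, so one cannot argue by openness; the base-change argument bypasses this subtlety. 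The remaining items reduce to the Hopf algebra axioms for $A$ and to standard facts on symmetric algebras.
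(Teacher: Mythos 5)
Your proof is correct; note that the paper itself gives no argument here (the proof of this proposition is explicitly omitted), so there is no authorial route to compare against, and your verifications are exactly the expected ones. In particular you handled the two points that genuinely need care: in (3), the factorization of $\nu_G$ through the units is supplied by the antipode, $v=u\circ S$, using that points of $G$ are algebra maps, and the closed-immersion property of $G\into \OO_k[G]^\times$ is transferred correctly by your cartesian-square/base-change argument (equivalently, one could invoke that $\OO_k[G]^\times\to\OO_k[G]$ is a separated monomorphism and that a closed immersion factoring through it forces the factorization to be a closed immersion). Parts (1) and (4) are the standard Hopf-algebra computations (coassociativity, counit, and the Sweedler expansion using multiplicativity of $\Delta$ and commutativity of $R$), and part (2) is correctly reduced to the graded decomposition $\Sym_k(A)=\bigoplus_n \Sym^n_k(A)$, with $A$ a direct summand for one direction and Lazard-type filtered-colimit (resp.\ direct-summand-of-free) arguments for the other.
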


\begin{proof}
Omitted.
\hfill $\Box$
\end{proof}

\begin{item-title}{Remark on Hopf algebra structure}
If $G$ is a finite, locally free commutative $k$-group scheme,
then the multiplication of its function ring induces a
comultiplication on $\OO_k[G]$ making it into a
Hopf $\OO_k$-algebra scheme. Moreover the $k$-algebra
$\OO_k[G](k)$ is the ring of functions of the Cartier dual
$G^\smallvee$. This Hopf algebra structure highlights
Examples~\ref{examples:additive group}
and~\ref{examples:multiplicative group} below.
\end{item-title}

\begin{example-title}{1}{finite locally free groups}
\label{example:group algebra of finite groups}
If $G$ is a finite locally free $k$-group scheme, the algebra
$\OO_k[G]$ is a smooth vector bundle of rank~$[G:\Spec(k)]$
and its group of units is the complement
in $\OO_k[G]$ of the Cartier divisor equal to the zero locus
of the determinant of the left regular representation:
\[
\OO_k[G]\stackrel{\LL}{\intoo}\End_{\OO_k\mbox{-}\Mod}(\OO_k[G])
\stackrel{\det}{\tooo} \AA^1.
\]
If moreover $G$ is the finite constant $k$-group scheme
defined by a finite abstract group $\Gamma$, then $\OO_k[G]$
is isomorphic to the algebra scheme defined by the abstract
group algebra $k[\Gamma]$, that is to say
$\OO_k[G](R)\simeq R[\Gamma]$ functorially in $R$.
\end{example-title}

\begin{example-title}{2}{the additive group}
\label{examples:additive group}
Let $G=\GG_a=\Spec(k[x])$. For a $k$-algebra $R$, let
\[
R\langle\!\langle t\rangle\!\rangle
=R[[t,t^{[2]},t^{[3]},\dots]]
\]
be the $R$-algebra of divided power formal power series in one
variable $t$, with $t^{[i]}t^{[j]}={i+j\choose i}t^{[i+j]}$
for all $i,j\ge 0$. Setting
$\OO\langle\!\langle t\rangle\!\rangle(R)=
R\langle\!\langle t\rangle\!\rangle$,
we have an isomorphism
$\OO_k[G]\isomto \OO\langle\!\langle t\rangle\!\rangle$ given
by:
\[
\OO_k[G](R) \isomto R\langle\!\langle t\rangle\!\rangle
\quad,\quad
(k[x]\stackrel{u}{\too} R) \longmapsto \sum_{i\ge 0} u(x^i)t^i.
\]
If $k$ is a ring of characteristic $p>0$, let $H=\alpha_p$
be the kernel of Frobenius. The algebra
$\OO_k[H](R)$ is identified with the $R$-subalgebra of
$R\langle\!\langle t\rangle\!\rangle$ generated by $t$, which
is isomorphic to $R[t]/(t^p)$ because $t^p=pt^{[p]}=0$.
\end{example-title}

\begin{example-title}{3}{the multiplicative group}
\label{examples:multiplicative group}
Let $G=\GG_m=\Spec(k[x,1/x])$. Let $R^{\ZZ}$ be
the product algebra, whose elements are sequences with componentwise
addition and multiplication. We have an isomorphism
$\OO_k[G]\isomto \prod_{i\in\ZZ}\OO_k$ given by:
\[
\OO_k[G](R) \isomto R^{\ZZ}
\quad,\quad
(k[x]\stackrel{u}{\too} R) \longmapsto \{u(x^i)\}_{i\in\ZZ}.
\]
More generally, for any torus $T$ with
character group $X(T)$, we have
$\OO_k[T]\isomto \prod_{i\in X(T)}\OO_k$.
Let $H=\mu_n$ be the subgroup of $n$-th roots of
unity. The algebra $\OO_k[H](R)$ is identified with
the $R$-subalgebra of $\ZZ/n\ZZ$-invariants
$\OO_k[G](R)^{\ZZ/n\ZZ}$, composed of sequences $\{r_i\}_{i\in\ZZ}$
such that $r_{i+nj}=r_i$ for all $i,j\in\ZZ$.
\end{example-title}

\subsection{Properties: functoriality and adjointness}

Here are some functoriality properties of the
group algebra.

\begin{prop} \label{prop:group algebra}
Let $G$ be an affine $k$-group scheme.
The formation of the group algebra $\OO_k[G]$:
\begin{trivlist}
\itemn{1} is functorial in $G$ and faithful;
\itemn{2} commutes with base change $k'/k$;
\itemn{3} is compatible with products: there is a canonical
isomorphism
$\OO_k[G]\otimes_{\OO_k}\OO_k[H]\isomto\OO_k[G\times H]$;
\itemn{4} is compatible with Weil restriction: if
$h:\Spec(k')\to \Spec(k)$ is a finite locally free morphism
of schemes, there is an isomorphism of $\OO_k$-algebra schemes
$\OO_k[G]\otimes_{\OO_k}h_*\OO_{k'}\isomto h_*h^*\OO_k[G]$.
\end{trivlist}
\end{prop}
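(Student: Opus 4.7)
The plan is to handle (1) and (2) by direct unwinding of definitions, (4) by a tensor-product computation in finite rank, and (3) via a colimit argument. For~(1), a morphism $\varphi:G\to H$ corresponds to a Hopf algebra map $\varphi^\sharp:B\to A$; I would set $\OO_k[\varphi](u):=u\circ\varphi^\sharp$ on an $R$-point $u:A\to R$ of $\OO_k[G]$. Compatibility with $\star$ amounts to $\Delta_A\circ\varphi^\sharp=(\varphi^\sharp\otimes\varphi^\sharp)\circ\Delta_B$, which is the statement that $\varphi^\sharp$ is a coalgebra map. Faithfulness is immediate from the naturality of the closed immersion $\nu_G:G\intoo\OO_k[G]$ in Proposition~\ref{prop:elem properties of group algebra}: restricting $\OO_k[\varphi]$ along $\nu_G$ recovers $\varphi$. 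For~(2), both $\Sym_k$ and the comultiplication commute with extension of scalars, so the $\OO_k$-algebra scheme $\VV(G/k)$ base-changes to $\VV(G_{k'}/k')$, and the $\star$-product, being built from $\Delta_A$, is preserved.

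For~(4), since $h$ is finite locally free, $h_*\OO_{k'}$ is a smooth vector bundle of finite rank, isomorphic as $\OO_k$-module scheme to $\VV((k')^\vee)$. The finite-rank tensor formula of~\ref{defs:algebra schemes} thus gives
\[
\OO_k[G]\otimes_{\OO_k}h_*\OO_{k'}\isomto\VV(A\otimes_k(k')^\vee),
\]
whose $R$-points are $\Hom_k(A\otimes_k(k')^\vee,R)\simeq\Hom_k(A,R\otimes_kk')$ via the canonical isomorphism $(k')^{\vee\vee}\simeq k'$ valid for $k'$ finite locally free. On the other hand, the $(h^*,h_*)$-adjunction combined with~(2) yields $(h_*h^*\OO_k[G])(R)=(h^*\OO_k[G])(R\otimes_kk')=\Hom_k(A,R\otimes_kk')$. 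Both identifications are natural in~$R$, and the algebra structures match because both are induced by $\Delta_A$ base-changed along $k\to k'$.

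The main obstacle is~(3). The natural candidate on $R$-points sends $u\otimes v$ to the linear map $a\otimes b\mapsto u(a)v(b)$, and its multiplicativity follows from $\Delta_{A\otimes_kB}=(\id_A\otimes\tau\otimes\id_B)\circ(\Delta_A\otimes\Delta_B)$ on the tensor bialgebra. The difficulty is that, when $A$ and $B$ have infinite rank, the pointwise tensor $\Hom_k(A,R)\otimes_R\Hom_k(B,R)$ is strictly smaller than $\Hom_k(A\otimes_kB,R)=\OO_k[G\times H](R)$, as the case $A=B=k[x]$ already shows. My approach would be to first extend the finite-rank formula of~\ref{defs:algebra schemes} to arbitrary vector bundles by defining $\VV(\mathscr{F})\otimes_{\OO_k}\VV(\mathscr{F}'):=\VV(\mathscr{F}\otimes_k\mathscr{F}')$, and to justify this by writing $A=\varinjlim_\lambda A_\lambda$, $B=\varinjlim_\mu B_\mu$ as filtered colimits of finitely generated sub-coalgebras, applying~\ref{defs:algebra schemes} at each finite stage, and passing to the colimit. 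The main checking to do is that this extended tensor product carries the correct $\OO_k$-algebra structure and commutes with $\star$; once this is settled, (3) is formal.
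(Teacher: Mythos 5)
Your treatment of (1), (2) and (4) is essentially the paper's: functoriality by precomposition with the Hopf algebra map and faithfulness from the closed immersion $G\intoo\OO_k[G]$; base change via $\Hom_{k'}(A\otimes_kk',R')\simeq\Hom_k(A,R')$; and for (4) the paper's proof is exactly your identification, written as $\Hom_k(A,R)\otimes_kk'\isomto\Hom_k(A,R\otimes_kk')$, which uses only that $k'$ is finite locally free over $k$ --- so the one-sided finiteness you invoke in the tensor formula is the same hom-tensor fact and is fine.

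The divergence is in (3), and there your diagnosis is more careful than the paper's own argument. The paper takes the direct route: it sends $u\otimes v$ to $a\otimes b\mapsto u(a)v(b)$ and simply asserts that $\Hom_k(A,R)\otimes_R\Hom_k(B,R)\to\Hom_k(A\otimes_kB,R)$ is a functorial isomorphism, i.e. it works with the pointwise tensor product of~\ref{defs:algebra schemes} and does not address the infinite-rank phenomenon; your $k[x]$ example is correct (for $G=H=\GGa$ the image consists only of finite-rank functionals). So your reading --- that the real content of (3) is that $\OO_k[G\times H]=\VV(A\otimes_kB)$, with $\star$ induced by $\Delta_{A\otimes_kB}=(\id\otimes\tau\otimes\id)\circ(\Delta_A\otimes\Delta_B)$, is the natural tensor product algebra scheme of $\OO_k[G]$ and $\OO_k[H]$ --- is the honest formulation, and the verification is indeed formal once the coproduct $\Delta_0\otimes\Delta_0'$ on $\mathscr{F}\otimes_k\mathscr{F}'$ is written down. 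What does not work as stated is your colimit justification: over an arbitrary ring $k$ one cannot in general write $A$ as a filtered union of sub-coalgebras that are finite locally free as $k$-modules (which is what~\ref{defs:algebra schemes} requires), and, more to the point, no passage to the limit can identify the pointwise tensor with $\VV(\mathscr{F}\otimes_k\mathscr{F}')$, since $\VV$ turns these colimits of modules into limits of schemes and the pointwise tensor does not commute with them --- this is exactly the failure your own counterexample exhibits. So drop that step: either take $\VV(\mathscr{F}\otimes_k\mathscr{F}')$ with the induced coproduct as the definition of the tensor product (as you in effect do), or read (3) as a canonical homomorphism that is an isomorphism when $G$ or $H$ is finite locally free. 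Note that elsewhere in the paper only parts (2) and (4) are used, where the finite factor makes the two readings coincide.
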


\begin{proof}
(1) Any map of affine $k$-group schemes
$G=\Spec(A)\to H=\Spec(B)$
gives rise to a map of $k$-Hopf algebras $B\to A$
and then by precomposition to a map of $R$-algebras
$\OO_k[G](R)\to \OO_k[H](R)$ which is functorial in $R$.
Faithfulness follows from the fact that $G\into\OO_k[G]$
is a closed immersion.

\smallskip

\noindent (2) The isomorphism
$\Hom_{k'\mbox{-}\Mod}(A\otimes_kk',R')\isomto
\Hom_{k\mbox{-}\Mod}(A,R')$, functorial in the $k'$-algebra $R'$,
gives an isomorphism
$\OO_{k'}[G_{k'}]\isomto \OO_k[G]\times_{\Spec(k)}\Spec(k')$.

\smallskip

\noindent (3) Write $G=\Spec(A)$ and $H=\Spec(B)$.
To a pair of $k$-module maps $u:A\to R$ and $v:B\to R$ we
attach the map $u\otimes v:A\otimes_kB\to R$,
$a\otimes b\mapsto u(a)v(b)$. This defines an isomorphism
\[ 
\Hom_{k\mbox{-}\Mod}(A,R)\otimes_R\Hom_{k\mbox{-}\Mod}(B,R)
\isomto \Hom_{k\mbox{-}\Mod}(A\otimes_kB,R)
\]
which is functorial in $R$. The result follows.

\smallskip

\noindent (4) If $D$ is an $\OO_k$-algebra scheme, the 
functorial $R$-algebra maps
\[
\setlength{\arraycolsep}{.6mm}
\begin{array}{rcl}
D(R)\otimes_R (R\otimes_kk') & \too & D(R\otimes_kk') \\
d\otimes r' & \longmapsto & r'd
\end{array}
\]
fit together to give a morphism
$D\otimes_{\OO_k}h_*\OO_{k'}\to h_*h^*D$.
In case $D=\OO_k[G]$ and $h$ finite locally free, this is
none other than the  isomorphism
$\Hom_{k\mbox{-}\Mod}(A,R)\otimes_k k'\isomto
\Hom_{k\mbox{-}\Mod}(A,R\otimes_kk')$.
\hfill $\Box$
\end{proof}

Finally we prove the adjointness property. We recall
from paragraph~\ref{defs:algebra schemes}
(see also Remark~\ref{remark:def is restrictive})
that $(\OO_k\mbox{-}\Alg)$
is the category of $\OO_k$-algebra schemes whose
underlying $\OO_k$-module scheme is of the form
$\VV(\mathscr{F})=\Spec \Sym(\mathscr{F})$ for some quasi-coherent
$\cO_{\Spec(k)}$-module $\mathscr{F}$, and that $(k\mbox{-}\Gr)$
is the category of affine $k$-group schemes.

\begin{theocite}{Adjointness property of the group algebra}
\label{theo:univ property gp algebra}
The group algebra functor is left adjoint to the group of
units functor. In other words, for all affine $k$-group schemes
$G$ and all linear $\OO_k$-algebra schemes $D$, the map that
sends a morphism of algebra schemes $\OO_k[G]\to D$ to the
composition $G\subset \OO_k[G]^\times\to D^\times$ gives a
bifunctorial bijection:
\[
\Hom_{\OO_k\mbox{-}\Alg}(\OO_k[G],D)\isomto
\Hom_{k\mbox{-}\Gr}(G,D^\times).
\]
\end{theocite}

\begin{proof}
We describe a map in the other direction and we leave to the
reader the proof that it is an inverse. Let $f:G\to D^\times$
be a morphism of $k$-group schemes. We will construct a map of
functors $f':\OO_k[G]\to D$. We know from
paragraph~\ref{defs:algebra schemes} that $D=\VV(F)$ where
$F$ is a $k$-module, and that the comultiplication
$\Delta_D:\Sym(F)\to \Sym(F)\otimes \Sym(F)$ is induced by a
morphism $\Delta_0:F\to F\otimes F$. Consider the composition
$G\to D^\times \subset D$ and let $g:\Sym(F)\to A$ be the
corresponding map of algebras. For each $k$-algebra $R$ we
have the equalities $\OO_k[G](R)=\Hom_{k\mbox{-}\Mod}(A,R)$ and
$D(R)=\Hom_{k\mbox{-}\Mod}(F,R)$. We define $f'$ as follows:
\[
\setlength{\arraycolsep}{.6mm}
\begin{array}{rcl}
\OO_k[G](R) & \too & D(R) \\
(u:A\to R) & \longmapsto & (u\circ g\circ i:F\to R)
\end{array}
\]
where $i:F\into\Sym(F)$ is the inclusion as the degree 1
piece in the symmetric algebra. The map $f'$ is a map of
modules, and we only have to check that it respects the
multiplication. Let $u,v:A\to R$ be module homomorphisms.
We have the following commutative diagram:
\[
\xymatrix@C=15mm{
A \ar[r]^-{\Delta_G} & A\otimes A \ar[r]^-{u\otimes v} & R \\
\Sym(F) \ar[r]^-{\Delta_D} \ar[u]^g & \Sym(F)\otimes \Sym(F)
\ar[u]_{g\otimes g} & \\
F \ar[r]^{\Delta_0} \ar[u]^i & F\otimes F \ar[u]_{i\otimes i} &}
\]
With the $\star$ notation as in
Subsection~\ref{construction of gp alg},
we compute:
\begin{align*}
(u\star_G v)\circ g\circ i
& =(u\otimes v)\circ \Delta_G\circ g\circ i \\
& =(ugi\otimes vgi)\circ \Delta_0 \\
& =ugi\star_D vgi. \\
\end{align*}
Thus $f':\OO_k[G]\to D$ is a map of algebra schemes and
this ends the construction.
\hfill $\Box$
\end{proof}

\begin{remark}
It follows from this result that a smooth vector bundle
with action of $G$ is the same as a (smooth) $\OO_k[G]$-module
scheme. Indeed, the endomorphism algebra of such a vector
bundle is representable by a linear $\OO_k$-algebra scheme $D$.
For example, if $G$ is an affine, finite type,
differentially flat $k$-group scheme, then the adjoint
action on $\Lie G$ makes it an $\OO_k[G]$-module scheme.
\end{remark}

\section{Weil restriction}
\label{section:Weil restriction}

We keep the notations from the previous sections.
In this section, we describe the Weil restriction $E=h_*\mathscr{G}$
of a $k[I]$-group scheme $\mathscr{G}\in\Gr\!/k[I]$ and show how it
carries the structure of an object of the category of
extensions $\Ext(I)/k$. The necessary notions
of differential calculus (tangent bundle, Lie algebra, exponential)
are recalled in Appendix~\ref{section:differential calculus}.

\subsection{Weil restriction of the group algebra}

If $\mathscr{G}$ is an affine $k[I]$-group scheme, its Weil restriction
$h_*\mathscr{G}$ embeds in the pushforward algebra $h_*\OO_{k[I]}[\mathscr{G}]$.
(It also embeds in the algebra $\OO_k[h_*\mathscr{G}]$ which however
is less interesting in that it does not reflect the Weil
restriction structure.) Our aim in this subsection is to give
a description of $h_*\OO_{k[I]}[\mathscr{G}]$ suited to the computation of
the adjunction map $\beta_{\mathscr{G}}$.

The starting point is the following definition and lemma.
Let $A$ be a $k[I]$-algebra and $R$ a $k$-algebra.
Let $v:A\to I\otimes_k R$ be a $k$-linear map such that
there is a $k$-linear map ${\bar v}:A\to R$ satisfying
$v(ix)=i\bar v(x)$ for all $i\in I$ and $x\in A$. Then ${\bar v}$
is uniquely determined by $v$; in fact it is already
determined by the identity $v(ix)=i{\bar v}(x)$ for any
fixed~$i$ belonging to a basis of $I$ as a $k$-module.

\begin{definition} \label{definition:$I$-compatible maps}
Let $A$ be a $k[I]$-algebra and $R$ a $k$-algebra. We say
that a $k$-linear map $v:A\to I\otimes_k R$ is {\em $I$-compatible}
if there is a $k$-linear map ${\bar v}:A\to R$ such that
$v(ix)=i{\bar v}(x)$ for all $i\in I$ and $x\in A$. We denote by
$\Homc_k(A,I\otimes_k R)$ the $R$-module of $I$-compatible maps
and $\Homc_k(A,I\otimes_k R)\to\Hom_k(A,R)$, $v\mapsto {\bar v}$
the $R$-module map that sends $v$ to the unique map ${\bar v}$
with the properties above.
\end{definition}

\bigskip

Note that since $I^2=0$ in $R[I]$, if $v$ is $I$-compatible then
${\bar v}$ vanishes on $IA$. In other words, the map $v\mapsto {\bar v}$
factors through $\Hom_k(A/IA,R)$.

\begin{lemma} \label{lemma:description of Weil res isom}
Let $A$ be a $k[I]$-algebra and $R$ a $k$-algebra.
\begin{trivlist}
\itemn{1} Each morphism of $k[I]$-modules $f:A\to R[I]$
is of the form $f={\bar v}+v$ for a unique $I$-compatible
$k$-linear map $v:A\to I\otimes_k R$, and conversely.
\itemn{2} Each morphism of $k[I]$-algebras $f:A\to R[I]$
is of the form $f={\bar v}+v$ as above with $v$ satisfying
moreover $v(i)=i$ for all $i\in I$ and $v(xy)={\bar v}(x)v(y)+{\bar v}(y)v(x)$
for all $x,y\in A$, and conversely. In particular
${\bar v}:A\to R$ is a $k$-algebra homomorphism and
$v:A\to I\otimes_k R$ is a ${\bar v}$-derivation.
\end{trivlist}
\end{lemma}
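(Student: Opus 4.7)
The plan is to exploit the canonical $k$-module direct sum decomposition $R[I] = R \oplus (I \otimes_k R)$ and split any $k$-linear map $f : A \to R[I]$ into its two components $f = \bar v + v$ with $\bar v : A \to R$ and $v : A \to I \otimes_k R$. Each of the successive conditions imposed on $f$ (being $k[I]$-linear, then multiplicative, then unital) will then translate into the corresponding conditions on the pair $(\bar v, v)$.

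For part (1), I would unpack $k[I]$-linearity. Since $I^2 = 0$ in $R[I]$, for any $i \in I$ and $x \in A$ we have $i \cdot (\bar v(x) + v(x)) = i\bar v(x)$ (the term $i \cdot v(x)$ lives in $I \cdot (I \otimes_k R) = 0$). Matching this with $f(ix) = \bar v(ix) + v(ix)$ forces $\bar v(ix) = 0$ and $v(ix) = i\bar v(x)$, which is exactly the $I$-compatibility of $v$ with associated map $\bar v$. The uniqueness clause recalled before Definition~\ref{definition:$I$-compatible maps} guarantees that the component $\bar v$ extracted from $f$ is the same map $\bar v$ as the one produced by the $I$-compatibility condition, so the notation is unambiguous. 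Conversely, starting from an $I$-compatible $v$ with associated $\bar v$, the sum $f := \bar v + v$ satisfies $k[I]$-linearity by reading the computation backwards, and $k$-linearity is obvious.

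For part (2), I would assume that $f$ is already $k[I]$-linear (so part (1) applies) and expand multiplicativity. Using $I^2 = 0$ in $R[I]$, the product $(\bar v(x)+v(x))(\bar v(y)+v(y))$ equals $\bar v(x)\bar v(y) + \bar v(x)v(y) + v(x)\bar v(y)$; comparison with $\bar v(xy) + v(xy)$ gives $\bar v(xy) = \bar v(x)\bar v(y)$ (so $\bar v$ is a $k$-algebra map) and $v(xy) = \bar v(x)v(y) + \bar v(y)v(x)$ (so $v$ is a $\bar v$-derivation). The unit condition $f(1_A) = 1$ splits as $\bar v(1) = 1$ and $v(1) = 0$. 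The remaining condition $v(i) = i$ for $i \in I$ is then automatic: on the one hand $f(i) = i \cdot f(1_A) = i \in R[I]$, and on the other hand the $R$-component of $i \in R[I]$ is $0$ so that $v(i) = i$ under the identification of $I$ with $I \otimes_k 1 \subset I \otimes_k R$. Conversely, given $(\bar v, v)$ satisfying the listed conditions, one checks by the same short computations that $f = \bar v + v$ is multiplicative, unital, and $k[I]$-linear.

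No step here is a genuine obstacle; the proof is essentially bookkeeping using the decomposition $R[I] = R \oplus (I \otimes_k R)$ and the relation $I^2 = 0$. The only mild subtlety is the compatibility of the two a priori distinct meanings of the symbol $\bar v$ (as the $R$-component of $f$ and as the map associated to $v$ by $I$-compatibility), which is resolved by the uniqueness remark preceding the definition of $I$-compatible maps.
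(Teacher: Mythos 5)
Your strategy is exactly the paper's: split $f$ along the decomposition $R[I]=R\oplus(I\otimes_kR)$, identify $k[I]$-linearity with $I$-compatibility in (1), and compare components of $f(xy)=f(x)f(y)$ and $f(1)=1$ in (2); the forward directions, including the observation that $v(i)=i$ comes for free from $f(i)=if(1)$, are complete and agree with the paper.

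The one place you are too quick is the converse of (2). The hypotheses of that converse bear on $v$ alone: $v$ is $I$-compatible, $v(i)=i$, and $v(xy)=\bar v(x)v(y)+\bar v(y)v(x)$, where $\bar v$ is just the $k$-linear map furnished by $I$-compatibility (that $\bar v$ is an algebra map is a conclusion, not a hypothesis -- and the stronger converse is what is used later, e.g.\ in the description of $(h^*h_*\mathscr{G})(R)$ purely in terms of conditions on $v$). Since $f(x)f(y)=\bar v(x)\bar v(y)+\bar v(x)v(y)+\bar v(y)v(x)$, multiplicativity of $f$ is \emph{equivalent} to multiplicativity of $\bar v$, and unitality of $f$ needs $\bar v(1)=1$ and $v(1)=0$; none of these is among the listed conditions, so ``the same short computations'' do not close the loop by themselves. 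The missing (short) argument, which the paper's proof states explicitly, is to compute $v(ixy)$ in two ways: $i\bar v(xy)=v((ix)y)=\bar v(ix)v(y)+\bar v(y)v(ix)=i\,\bar v(x)\bar v(y)$, using that $\bar v$ vanishes on $IA$ (a consequence of $I$-compatibility recorded right after the definition, which is also what justifies ``reading the computation backwards'' in your converse to (1)); letting $i$ run through a basis of $I$ gives $\bar v(xy)=\bar v(x)\bar v(y)$. Likewise $i=v(i)=v(i\cdot 1)=i\,\bar v(1)$ gives $\bar v(1)=1$, and then $v(1)=v(1\cdot 1)=2v(1)$ forces $v(1)=0$. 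With this supplement your proof coincides with the paper's.
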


\begin{proof}
(1) Using the decomposition $R[I]=R\oplus I\otimes_k R$,
we can write $f(x)=u(x)+v(x)$ for some unique $k$-linear
maps $u:A\to R$ and $v:A\to I\otimes_k R$. Then $f$ is
$k[I]$-linear if and only if $f(ix)=if(x)$ for all $i\in I$
and $x\in A$. Taking into account that $I^2=0$, this means
that $v$ is $I$-compatible and $u={\bar v}$.

\smallskip

\noindent (2) The condition $f(1)=1$ means that ${\bar v}(1)=1$,
that is $v(i)=i$ for all $i\in I$, and $v(1)=0$.
The condition of multiplicativity of~$f$ means that
${\bar v}$ is multiplicative and $v$ is a ${\bar v}$-derivation, i.e.
$v(xy)={\bar v}(x)v(y)+{\bar v}(y)v(x)$. In the presence of the
derivation property, the multiplicativity of ${\bar v}$ is
automatic (computing $v(ixy)$ in two different ways) as well
as the condition $v(1)=0$ (setting $x=y=1$). Conversely if $v$
is $I$-compatible with $v(i)=i$ and $v(xy)={\bar v}(x)v(y)+{\bar v}(y)v(x)$,
one sees that ${\bar v}$ is a morphism
of rings and $f={\bar v}+v$ is a morphism of $k[I]$-algebras.
\hfill $\Box$
\end{proof}

Now let $\mathscr{G}$ be an affine $k[I]$-group scheme.
Lemma~\ref{lemma:description of Weil res isom} shows that
the Weil restriction $h_*\VV(\mathscr{G}/k[I])$ can be described
in terms of the scheme of $I$-compatible maps, defined as a functor
on $k$-algebras by:
\[
\OO_{\mathrm{c}}(\mathscr{G})(R):=\Homc_k(A,I\otimes_k R).
\]
We know that $h_*\VV(\mathscr{G}/k[I])$ supports the algebra scheme
structure $h_*\OO_{k[I]}[\mathscr{G}]$, and we will now identify
the multiplication induced on
$\OO_{\mathrm{c}}(\mathscr{G})$ by means of this isomorphism.

\begin{prop} \label{prop:group algebra O(h!)}
Let $\mathscr{G}=\Spec(A)$ be an affine $k[I]$-group scheme
with comultiplication $\Delta:A\to A\otimes_{k[I]}A$
and counit $e:A\to k[I]$, with $e={\bar d}+d$
for a unique $I$-compatible $k$-linear map $d:A\to I$.
\begin{trivlist}
\itemn{1}
Let $R$ be a $k$-algebra and let $v,w:A\to I\otimes_k R$
be two $I$-compatible $k$-linear maps. Then the morphism
${\bar v}\otimes_k w+v\otimes_k {\bar w}:A\otimes_kA\to R$
factors through a well-defined $k$-linear morphism
\[
{\bar v}\otimes_k w+v\otimes_k {\bar w}:A\otimes_{k[I]}A\to R.
\]
\itemn{2} For $v,w$ as before let:
\[
v\diamond w:=({\bar v}\otimes_k w+v\otimes_k {\bar w})\circ\Delta.
\]
Then $(\OO_{\mathrm{c}}(\mathscr{G}),+,\diamond)$ is an associative unitary
$\OO_k$-algebra with multiplicative unit $d$, and the map
\[
\theta_\mathscr{G}:\OO_{\mathrm{c}}(\mathscr{G})\isomto h_*\OO_{k[I]}[\mathscr{G}],\quad
v\longmapsto {\bar v}+v
\]
is an isomorphism of associative unitary $\OO_k$-algebras.
\end{trivlist}
\end{prop}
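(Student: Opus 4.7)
For part (1), my strategy is to check that the bilinear map $\phi := {\bar v}\otimes_k w + v\otimes_k {\bar w}$ on $A\otimes_k A$ annihilates the kernel of the surjection $A\otimes_k A \twoheadrightarrow A\otimes_{k[I]}A$. That kernel is generated as an abelian group by tensors of the form $ia\otimes b - a\otimes ib$ with $i\in I$ and $a,b\in A$. Using $I$-compatibility together with the remark that $I^2=0$ in $R[I]$ forces $\bar{v}$ and $\bar{w}$ to vanish on $IA$, the evaluation
\[
\phi(ia\otimes b - a\otimes ib) = {\bar v}(ia)w(b)+v(ia){\bar w}(b) - {\bar v}(a)w(ib)-v(a){\bar w}(ib)
\]
simplifies via ${\bar v}(ia)={\bar w}(ib)=0$, $v(ia)=i{\bar v}(a)$, $w(ib)=i{\bar w}(b)$, and the two remaining terms $i{\bar v}(a){\bar w}(b)$ and $-{\bar v}(a)\cdot i{\bar w}(b)$ cancel in $I\otimes_k R$. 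This gives (1).

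For part (2), the key idea is to transport the algebra structure of $h_*\OO_{k[I]}[\mathscr{G}]$ along the bijection $\theta_\mathscr{G}$ (which is already an isomorphism of $\OO_k$-modules by Lemma~\ref{lemma:description of Weil res isom}(1)), and to verify that the transported multiplication agrees with $\diamond$. So let $f:={\bar v}+v$ and $g:={\bar w}+w$, regarded as $k[I]$-linear maps $A\to R[I]$, and compute $(f\star g)(x)=(f\otimes_{R[I]} g)(\Delta(x))$. Expanding
\[
f(a)g(b)=({\bar v}(a)+v(a))({\bar w}(b)+w(b))
\]
in $R[I]=R\oplus (I\otimes_k R)$, the cross-term $v(a)w(b)$ vanishes because $I^2=0$, leaving a classical summand ${\bar v}(a){\bar w}(b)\in R$ and an infinitesimal summand ${\bar v}(a)w(b)+v(a){\bar w}(b)\in I\otimes_k R$. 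Summing over $\Delta(x)$, the infinitesimal part is precisely $(v\diamond w)(x)$.

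It remains to identify the classical part with $\overline{v\diamond w}(x)$. This is the one real computation: I evaluate $v\diamond w$ at $ia$ using the $k[I]$-linearity of $\Delta$, which allows me to write $\Delta(ia)=\sum i a_{(1)}\otimes a_{(2)}$ in $A\otimes_{k[I]}A$; applying $({\bar v}\otimes w+v\otimes {\bar w})$ and invoking again ${\bar v}(ia_{(1)})=0$ and $v(ia_{(1)})=i{\bar v}(a_{(1)})$ yields $(v\diamond w)(ia)=i\sum {\bar v}(a_{(1)}){\bar w}(a_{(2)})$. Hence $\overline{v\diamond w}=({\bar v}\otimes {\bar w})\circ\Delta$, which is well-defined as a map out of $A\otimes_{k[I]}A$ since $\bar v$ and $\bar w$ both vanish on $IA$. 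This matches the classical part of $f\star g$, so $\theta_\mathscr{G}(v)\star\theta_\mathscr{G}(w)=\theta_\mathscr{G}(v\diamond w)$.

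The remaining assertions are then essentially free. Associativity of $\diamond$ is inherited from $\star$ via $\theta_\mathscr{G}$, and the multiplicative unit of $h_*\OO_{k[I]}[\mathscr{G}](k)$ is the counit $e:A\to k[I]\subset R[I]$, whose preimage under $\theta_\mathscr{G}$ is, by definition of $e={\bar d}+d$, the element $d$. The only genuine obstacle is the bookkeeping in the product expansion, where one must keep track of three distinct splittings at once ($R[I]=R\oplus I\otimes_k R$, the decomposition of $\Delta(x)$ in $A\otimes_{k[I]} A$, and the way $I$-compatibility mediates between $v$ and $\bar v$); once these are kept separate, the verifications are straightforward.
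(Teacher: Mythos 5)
Your proof is correct and follows essentially the same route as the paper: the same cancellation $i\bar v(a)\bar w(b)$ on the generators of the kernel for (1), and for (2) the same two observations — $\overline{v\diamond w}=(\bar v\otimes_k\bar w)\circ\Delta$ via the $k[I]$-linearity of $\Delta$, and the vanishing of the cross-term $v\otimes_k w$ because $I^2=0$ — followed by transferring associativity and the unit through $\theta_\mathscr{G}$. Splitting $f\star g$ into classical and infinitesimal parts is only a cosmetic reorganization of the paper's direct comparison of the two bracketed maps.
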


\begin{proof}
(1) The $k$-linear morphism ${\bar v}\otimes_k w+v\otimes_k {\bar w}$
takes the same value $i{\bar v}(a){\bar w}(b)$ on the tensors
$ia\otimes b$ and $a\otimes i b$ for all $i\in I$, $a,b\in A$.
Therefore it vanishes on
tensors of the form $(a\otimes b)(i\otimes 1-1\otimes i)$.
Since these tensors generate the kernel of the ring map
$A\otimes_kA\to A\otimes_{k[I]}A$, we obtain an induced morphism
${\bar v}\otimes_k w+v\otimes_k {\bar w}:A\otimes_{k[I]}A\to R$.

\smallskip

\noindent (2) According to (1) the definition of $v\diamond w$
makes sense. For the rest of the statement, it is enough to prove that
$\theta_\mathscr{G}(v\diamond w)=\theta_\mathscr{G}(v)\star \theta_\mathscr{G}(w)$
because if this is the case then all the known properties
of the product $\star$ in $h_*\OO_{k[I]}[\mathscr{G}]$ are transferred to
$\diamond$ by the isomorphism~$\theta_\mathscr{G}$. On one hand,
using the expression
$v\diamond w=({\bar v}\otimes_k w+v\otimes_k {\bar w})\circ\Delta$
and the fact that $\Delta$ is $k[I]$-linear, we find
$\overline{v\diamond w}=({\bar v}\otimes_k{\bar w})\circ\Delta$ hence:
\begin{align*}
\theta_\mathscr{G}(v\diamond w)
& =
\big({\bar v}\otimes_k {\bar w}\big)\circ\Delta+
\big({\bar v}\otimes_k w+v\otimes_k {\bar w}\big)\circ\Delta \\
& = \big[{\bar v}\otimes_k {\bar w}+
({\bar v}\otimes_k w+v\otimes_k {\bar w})\big]\circ\Delta.
\end{align*}
On the other hand, we have:
\[
\theta_\mathscr{G}(v)\star \theta_\mathscr{G}(w)=
\big[({\bar v}+v)\otimes_k ({\bar w}+w)\big]\circ\Delta.
\]
The maps in the brackets are equal, whence
$\theta_\mathscr{G}(v\diamond w)=\theta_\mathscr{G}(v)\star \theta_\mathscr{G}(w)$
as desired.
\hfill $\Box$
\end{proof}

\begin{remark}
If $(\eps_1,\dots,\eps_r)$ is a basis of $I$ we have
a concrete description as follows. A $k$-linear map
$v:A\to I\otimes_k R$ can be written
$v=\eps_1v_1+\dots+\eps_rv_r$ for some maps
$v_j:A\to R$. Then $v$ is $I$-compatible if and only if
$v_j\eps_i=\delta_{i,j}{\bar v}$ for all $i,j$.
If this is the case, $v_j$ induces a $k$-linear morphism
$A\otimes_{k[I]}k[\eps_j]\to R$ and ${\bar v}=v_j\eps_j$ for each $j$.
Now write
$\mathscr{G}_j=\mathscr{G}\otimes_{k[I]}k[\eps_j]$ and
$h_j:\Spec(k[\eps_j])\to\Spec(k)$ the structure map.
Also let $\mathscr{G}_k=\mathscr{G}\otimes_{k[I]}k$ so we have maps
$\VV(h_{j,!}\mathscr{G}_j/k)\to \VV(\mathscr{G}_k/k)$, $v_j\mapsto {\bar v}:=v_j\eps_j$.
Then we have an isomorphism:
\[
\VV(h_{1,!}\mathscr{G}_1/k)\underset{\VV(\mathscr{G}_k/k)}{\times}\dots
\underset{\VV(\mathscr{G}_k/k)}{\times} \VV(h_{r,!}\mathscr{G}_r/k)
\isomto \OO_{\mathrm{c}}(\mathscr{G})
\]
given by
$(v_1,\dots,v_r) \mapsto v=\eps_1v_1+\dots+\eps_rv_r$.
\end{remark}

\subsection{Kernel of the adjunction $\beta:h^*h_*\mathscr{G}\to \mathscr{G}$}
\label{subsection:the map beta}

Again let $\mathscr{G}=\Spec(A)$ be an affine $k[I]$-group scheme.
Denote by $\Delta:A\to A\otimes_{k[I]}A$ the comultiplication
and $e:A\to k[I]$ the counit, of the form $e={\bar d}+d$
for a unique $I$-compatible $k$-linear map $d:A\to I$.
The purpose of this subsection is to prove the
following proposition.

\begin{prop} \label{ker of beta}
Let $\mathscr{G}$ be an affine $k[I]$-group scheme and
$\mathscr{G}_k=i^*\mathscr{G}$. Let $\beta=\beta_\mathscr{G}:h^*h_*\mathscr{G} \to \mathscr{G}$ be the
adjunction, and $\mathrm{L}(\mathscr{G}):=\ker(\beta)$.
\begin{trivlist}
\itemn{1} {\rm Functoriality.} The formation of $\mathrm{L}(\mathscr{G})$
is functorial for morphisms of pointed $k[I]$-schemes, and for
morphisms of $k[I]$-group schemes.
\itemn{2} {\rm Explicit description.}
We have embeddings of monoids
$\mathrm{L}(\mathscr{G})\subset h^*h_*\mathscr{G} \subset (h^*\OO_{\mathrm{c}}(\mathscr{G}),\diamond)$
under which, functorially in the $k[I]$-algebra $R$:
\[
(h^*h_*\mathscr{G})(R)
=\left\{v \in (h^*\OO_{\mathrm{c}}(\mathscr{G}))(R) \
\left| \
\begin{array}{l}
v(i)=i \mbox{ for all } i\in I \\
v(xy)={\bar v}(x)v(y)+{\bar v}(y)v(x) \mbox{ for all } x,y\in A
\end{array}
\right.
\right\}
\]
and if $v_R:A\to R$ denotes the composition
$A\stackrel{v}{\too} I\otimes_k R\stackrel{i\otimes r\mapsto ir}{\tooo}R$:
\[
\mathrm{L}(\mathscr{G})(R)=
\big\{ v\in (h^*h_*\mathscr{G})(R) \ : \ {\bar v}+v_R=e_R \big\}.
\]
\itemn{3} {\rm Special fibre.}
There is a functorial isomorphism
$i^*\mathrm{L}(\mathscr{G})\isomto \Lie(\mathscr{G}_k,I)$,
$v\mapsto v-d$.
Under this isomorphism,
the action by conjugation of $h_*\mathscr{G}=i^*h^*h_*\mathscr{G}$ on
$i^*\mathrm{L}(\mathscr{G})$ is given by the morphism:
\[
h_*\mathscr{G} \stackrel{i^*\beta}{\tooo} \mathscr{G}_k \stackrel{\Ad}{\tooo}
\GL(\Lie(\mathscr{G}_k,I)).
\]
\itemn{4} {\rm Case of trivial deformation groups.}
If $\mathscr{G}=h^*G$ for some affine $k$-group scheme $G$,
there is a canonical and functorial isomorphism
$\mathrm{L}(\mathscr{G})\isomto h^*\Lie(G,I)$.
More precisely, let $\exp_{G,I}:h^*\Lie(G,I) \to h^*G$
be the exponential morphism as defined in~\ref{def:exponential}.
Then under the isomorphism (see~\ref{TG and Lie G})
\[
\varrho_G:\Lie(G,I)\times G \isomto \T(G,I),
\]
the subgroup $\mathrm{L}(\mathscr{G})\subset h^*\T(G,I)$ has for points
the pairs $(x,g) \in h^*\Lie(G,I)\times h^*G$ such that
$g=\exp(-x)$, and the isomorphism is given by $(x,g)\mapsto x$.
\end{trivlist}
\end{prop}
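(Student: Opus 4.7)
The plan is to deduce all four parts from the explicit descriptions of $h_*\mathscr{G}$ given by Lemma \ref{lemma:description of Weil res isom} and Proposition \ref{prop:group algebra O(h!)}, together with the following concrete presentation of $\beta$: for every $k[I]$-algebra $R$, the counit $\beta_R:(h^*h_*\mathscr{G})(R)\to\mathscr{G}(R)$ is precomposition with the canonical $k[I]$-algebra multiplication map $R\otimes_k k[I]\to R$, $r\otimes\lambda\mapsto\lambda r$. Part~(1) is then essentially formal: the kernel of a pointed (resp.\ group-scheme) morphism is functorial in the source, so the functoriality of $\beta$ transports directly to $\mathrm{L}(\mathscr{G})$.

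For part~(2), I first apply Lemma \ref{lemma:description of Weil res isom}(2) to write $h_*\mathscr{G}(R)=\Hom_{k[I]\text{-}\Alg}(A,R[I])$ as the set of pairs $(\bar v,v)$ with $v$ an $I$-compatible $k$-linear map satisfying $v(i)=i$ and $v(xy)=\bar v(x)v(y)+\bar v(y)v(x)$; base-changing gives the description of $h^*h_*\mathscr{G}(R)$ in the statement. Composing such an $f=\bar v+v:A\to R[I]$ with the multiplication $R[I]\to R$ produces the $k[I]$-algebra map $\bar v+v_R:A\to R$, and demanding that it equal the unit $e_R$ of $\mathscr{G}(R)$ cuts out $\mathrm{L}(\mathscr{G})(R)$ by the equation $\bar v+v_R=e_R$.

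For part~(3), base change along $i:\Spec(k)\to\Spec(k[I])$ amounts to restricting to $k$-algebras $R$. For such $R$, every $i\in I$ acts by~$0$, so $v_R\equiv 0$ and the kernel condition collapses to $\bar v=e_R=\bar d_R$. Thus $v$ is a $\bar d_R$-derivation $A\to I\otimes_k R$ with prescribed underlying algebra map and $v(i)=i$; since $d_R$ shares these properties, $w:=v-d_R$ is a $\bar d_R$-derivation with $\bar w=0$ and $w|_I=0$, and a short check shows that $w$ vanishes on $IA$ and hence descends to a $\bar d_R$-derivation $A/IA\to I\otimes_k R$, i.e.\ an element of $\Lie(\mathscr{G}_k,I)(R)$. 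The inverse is obtained by lifting any such derivation and adding $d_R$. For the conjugation action, $\mathrm{L}(\mathscr{G})$ is the kernel of a morphism of group schemes, hence normal in $h^*h_*\mathscr{G}$; applying $i^*$ produces an action of $h_*\mathscr{G}$ on $\Lie(\mathscr{G}_k,I)$ which factors through $\mathscr{G}_k$ via $i^*\beta$, and the conjugation action of $\mathscr{G}_k$ on its own Lie algebra with $I$-coefficients is by definition the adjoint representation.

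For part~(4), I exploit the splitting available when $\mathscr{G}=h^*G$. The triangle identity for the $(h^*,h_*)$-adjunction implies that $h^*\alpha_G:h^*G\to h^*h_*h^*G$ is a section of $\beta$, so $h^*h_*h^*G$ decomposes as the semidirect product $\mathrm{L}(h^*G)\rtimes h^*G$. Since $h_*h^*G(T)=G(T\otimes_k k[I])=\T(G,I)(T)$ for every $k$-algebra $T$, one has $h^*h_*h^*G\simeq h^*\T(G,I)$, and under $\varrho_G$ the section $h^*\alpha_G$ corresponds to the zero-section of $\T(G,I)\to G$. The main technical obstacle is to identify the kernel of $\beta$ inside $h^*\T(G,I)\simeq h^*\Lie(G,I)\times h^*G$ as the graph $\{(x,g):g=\exp(-x)\}$: translating the equation $\bar v+v_R=e_R$ of part~(2) into the $(x,g)$-coordinates provided by $\varrho_G$ forces the basepoint $g$ to be the inverse of the image of $x$ under the square-zero exponential, which is exactly $\exp(-x)$ by the construction of $\exp_{G,I}$ in paragraph~\ref{def:exponential}. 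The projection $(x,g)\mapsto x$ then provides the desired isomorphism $\mathrm{L}(h^*G)\isomto h^*\Lie(G,I)$.
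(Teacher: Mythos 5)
Your handling of (1), (2) and (4) is essentially the paper's own route: (1) is formal functoriality of kernels, (2) is Lemma~\ref{lemma:description of Weil res isom} plus the explicit counit (a small slip: $\beta$ is \emph{post}composition of $f:A\to R\otimes_k k[I]$ with the multiplication $R\otimes_k k[I]\to R$, not precomposition, though your next sentence does it correctly), and (4) amounts to the identity $\beta\big(h^*\gamma_G(x)\cdot h^*\alpha_G(g)\big)=\exp_{G,I}(x)\,g$, which follows from the triangle identity and the definition of $\exp_{G,I}$, exactly as in Proposition~\ref{prop:elementary properties of exp}(3); the kernel is then the graph $g=\exp(-x)$.

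The genuine gap is in (3). First, you only construct a \emph{bijection} $v\mapsto v-d_R$ between $(i^*\mathrm{L}(\mathscr{G}))(R)$ and $\Lie(\mathscr{G}_k,I)(R)$; the statement (and its later use, which is precisely what makes $h_*\mathscr{G}$ an extension of $\mathscr{G}_k$ by the \emph{vector group} $\Lie(\mathscr{G}_k,I)$) requires this to be an isomorphism of group schemes, i.e.\ that the multiplication $\diamond$ inherited from $\mathscr{G}$ goes over to addition of derivations. This is not automatic and is exactly what the paper checks by the group-algebra computation $(d_R+\delta_1)\diamond(d_R+\delta_2)=d_R+\delta_1+\delta_2$ in $\OO_{\mathrm{c}}(\mathscr{G})$. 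Second, your argument for the action: the claim that conjugation by $h_*\mathscr{G}$ on the kernel ``factors through $\mathscr{G}_k$ via $i^*\beta$'' tacitly uses commutativity of the kernel, i.e.\ the point you skipped; and the final assertion that ``the conjugation action of $\mathscr{G}_k$ on its own Lie algebra is by definition the adjoint representation'' does not apply as stated. Indeed $\Ad$ is defined (see~\ref{TG and Lie G}) by conjugation inside $\T(\mathscr{G}_k,I)=h_*h^*\mathscr{G}_k$, the Weil restriction of the \emph{trivial} deformation, whereas here conjugation happens inside $h_*\mathscr{G}$ for an arbitrary deformation, a different (in general non-split-compatibly) extension of $\mathscr{G}_k$ by the same kernel, with no canonical group-theoretic lift of $\mathscr{G}_k$; one must actually show that conjugation by $u$ depends only on $\bar u=i^*\beta(u)$ and agrees with $\Ad$. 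The paper does this by computing $u\diamond(d_R+\delta)\diamond u^{-1}=d_R+u\diamond\delta\diamond u^{-1}$ and invoking Proposition~\ref{prop:embeddings in OG}, which identifies the adjoint action with conjugation in the group algebra. Both missing steps are repairable with the $\diamond$-calculus you already set up for (2), but as written they are asserted rather than proved.
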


\begin{proof}
(1) If $\varphi:\mathscr{G}\to \mathscr{G}'$ is a morphism of
pointed schemes, then by functoriality of $\beta$ the morphism
$h_*h^*\varphi$ takes the kernel of $\beta_\mathscr{G}$ into the kernel of
$\beta_{\mathscr{G}'}$. If moreover $\varphi$ is a map of group schemes
then the restriction of $h_*h^*\varphi$ to $\mathrm{L}(\mathscr{G})$ also.

\smallskip

\noindent (2)
In the rest of the proof we use the possibility to compute
in the group algebra $(\OO_{\mathrm{c}}(\mathscr{G}),+,\diamond)$,
see Proposition~\ref{prop:group algebra O(h!)}.
 The description of $h^*h_*\mathscr{G}$ as a submonoid of the
multiplicative monoid of $\OO_{\mathrm{c}}(\mathscr{G})$ is copied from
Lemma~\ref{lemma:description of Weil res isom}. The description of
$\mathrm{L}(\mathscr{G})$ follows from the fact that for $f\in\mathscr{G}(R[I])$,
$f={\bar v}+v:A\to R[I]$, the image $\beta(f)\in\mathscr{G}(R)$,
$\beta(f):A\to R$ is the map ${\bar v}+v_R$.

\smallskip

\noindent (3) -- first claim.
The pullback $i^*$ is the restriction to the category of those
$k[I]$-algebras $R$ such that $IR=0$. For such an $R$,
an element $v\in (i^*\mathrm{L}(\mathscr{G}))(R)$ is a $k$-linear map
$v:A\to R$ such that $v(i)=i$, $v(xy)={\bar v}(x)v(y)+{\bar v}(y)v(x)$
and ${\bar v}=e_R$. In particular we see that $v$ is an
$e_R$-derivation. Since also $d_R$ is an $e_R$-derivation with
$d_R(i)=i$ for all $i\in I$, the difference $\delta:=v-d_R$
induces an $e_R$-derivation $\delta:A\to I\otimes_kR$ vanishing
on $IA$, i.e. an $R$-point of $\Lie(\mathscr{G}_k,I)$. Conversely, any
$e_R$-derivation $\delta:A\to I\otimes_kR$ vanishing on $IA$
gives rise to a $k$-linear map $v:A\to I\otimes_kR$
defined by $v:=d_R+\delta$ and satisfying the properties required
to be a point of $(i^*\mathrm{L}(\mathscr{G}))(R)$. Finally let
$\delta_1,\delta_2\in \Lie(\mathscr{G}_k,I)(R)$. Since $\delta_1,\delta_2$
vanish on $IA$, we have $d_R^*+\delta_1^*=d_R^*+\delta_2^*=e_R$
and then:
\begin{align*}
(d_R+\delta_1)\diamond (d_R+\delta_2)
& = \big[e_R\otimes (d_R+\delta_2)+(d_R+\delta_1)
\otimes e_R\big]\circ\Delta \\
& = \big[(e_R\otimes d_R+d_R\otimes e_R)+
(e_R\otimes \delta_2+\delta_1\otimes e_R)\big]\circ\Delta.
\end{align*}
All three morphisms $e_R\otimes d_R+d_R\otimes e_R$,
$e_R\otimes \delta_2$ and $\delta_1\otimes e_R$ factor through
$A\otimes_{k[I]}A$, so the precomposition with $\Delta$
is distributive for them. Since also $d_R$ is the neutral
element for the law $\diamond$ and $e_R$ is the neutral
element for the law $\star$, we obtain:
\begin{align*}
(d_R+\delta_1)\diamond (d_R+\delta_2)
& = \big[e_R\otimes d_R+d_R\otimes e_R\big]\circ\Delta
+\big[e_R\otimes \delta_2\big]\circ\Delta
+\big[\delta_1\otimes e_R\big]\circ\Delta \\
& = d_R+\delta_1+\delta_2.
\end{align*}
This shows that the isomorphism
$i^*\mathrm{L}(\mathscr{G})\isomto \Lie(\mathscr{G}_k)$,
$v\mapsto v-d_R$ is a morphism of groups.

\smallskip

\noindent (3) -- second claim. Let $R$ be a $k[I]$-algebra such that
$IR=0$. The action of an element $u\in (h^*h_*\mathscr{G})(R)$ on an
element $v=d_R+\delta\in \mathrm{L}(\mathscr{G})(R)$ by conjugation
can be computed in the algebra $h^*\OO_{\mathrm{c}}(\mathscr{G})$:
\[
u\diamond v\diamond u^{-1}=u\diamond (d_R+\delta)\diamond u^{-1}
=u\diamond d_R\diamond u^{-1}+u\diamond \delta\diamond u^{-1}
=d_R+u\diamond \delta\diamond u^{-1}.
\]
We see that $u$ is acting on $\delta$ by conjugation in the
group algebra. This is the adjoint action, as explained
in Proposition~\ref{prop:embeddings in OG}.

\smallskip

\noindent (4) When $\mathscr{G}=h^*G$, the adjunction map is the
infinitesimal translation as in
Proposition~\ref{prop:elementary properties of exp}(3).
The $R$-points of $\mathrm{L}(\mathscr{G})$ are the pairs
$(x,g)$ such that $\exp(x)g=1$ in $G(R)$.
This amounts to $g=\exp(-x)$ which proves (4).
\hfill $\Box$
\end{proof}

\subsection{Extension structure of the Weil restriction}
\label{subsection:extension structure}

Let $\mathscr{G}$ be a $k[I]$-group scheme. The notion of
{\em rigidification} for $\mathscr{G}$ and the property that
$\mathscr{G}$ be {\em rigid} are defined in \ref{item:results}.
Here are some remarks.

\begin{trivlist}
\itemn{1} If $\sigma:h^*\mathscr{G}_k\isomto \mathscr{G}$ is a rigidification,
then $\sigma(1)^{-1}\cdot \sigma$ is another. Therefore if there
exists a rigidification~$\sigma$, we may always assume moreover
that $\sigma(1)=1$.
\itemn{2} By the infinitesimal lifting criterion, all
smooth affine $k[I]$-group schemes are rigid. By Cartier
duality, $k[I]$-group schemes of multiplicative type are
rigid.
\itemn{3} If $\alpha:\mathscr{G}\to \mathscr{G}'$ is a morphism
between rigid $k[I]$-group schemes, it is not always possible
to choose rigidifications for $\mathscr{G}$ and $\mathscr{G}'$
that are compatible in the sense that
$\sigma'\circ h^*\alpha=\alpha\circ \sigma$.
For instance let $I=k\eps$ and let $\alpha:\GG_a\to \GG_a$ be
the morphism defined by $\alpha(x)=\eps x$. Then $\mathscr{G}$ and
$\mathscr{G}'$ are rigid but since $h^*\alpha=0$, there do not
exist compatible rigidifications.
\end{trivlist}

\begin{lemma} \label{lemma:rigidifications vs sections}
Let $\mathscr{G}$ be a $k[I]$-group scheme such that the
restriction homomorphism $\mathscr{G}_k=i^*\mathscr{G}$ is $k$-flat. Let
$\pi:=i^*\beta_\mathscr{G}:h_*\mathscr{G}\to\mathscr{G}_k$. Then the adjunction
$\Hom_{\Sch/k[I]}(h^*\mathscr{G}_k,\mathscr{G})=\Hom_{\Sch/k}(\mathscr{G}_k,h_*\mathscr{G})$
induces a bijection between rigidifications of $\mathscr{G}$ and sections
of $\pi$.
\end{lemma}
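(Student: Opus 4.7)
The plan is to use the $(h^*,h_*)$-adjunction together with a short Nakayama-style argument that exploits $I^2=0$. Under the adjunction, every $k[I]$-scheme morphism $\sigma:h^*\mathscr{G}_k\to \mathscr{G}$ corresponds to a $k$-scheme morphism $\tilde\sigma:\mathscr{G}_k\to h_*\mathscr{G}$ via $\sigma=\beta_\mathscr{G}\circ h^*\tilde\sigma$. Since $h\circ i=\id_{\Spec(k)}$, the functor $i^*h^*$ is the identity; applying $i^*$ to this triangle identity yields
\[
i^*\sigma \,=\, i^*\beta_\mathscr{G}\circ i^*h^*\tilde\sigma \,=\, \pi\circ\tilde\sigma.
\]
Thus the condition that $\sigma$ lifts $\id_{\mathscr{G}_k}$ translates exactly to the condition that $\tilde\sigma$ is a section of $\pi$. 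This reduces the lemma to the assertion that any scheme morphism $\sigma:h^*\mathscr{G}_k\to \mathscr{G}$ lifting the identity is automatically an isomorphism.

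To prove that last step, I would work ring-theoretically. Set $A:=\cO(\mathscr{G})$ so that $\cO(h^*\mathscr{G}_k)=(A/IA)\otimes_k k[I]=:B$, and let $\sigma^\flat:A\to B$ be the ring map of $\sigma$; by hypothesis it reduces modulo $I$ to $\id_{A/IA}$. The hypothesis that $\mathscr{G}_k$ is $k$-flat makes $A/IA$ flat over $k$, and hence $B$ flat over $k[I]$. For surjectivity, let $C=\coker(\sigma^\flat)$; right-exactness of $A/IA\to B/IB\to C/IC\to 0$ together with the fact that $A/IA\to B/IB$ is the identity gives $C/IC=0$, so $C=IC=I(IC)\subseteq I^2C=0$. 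For injectivity, once $\sigma^\flat$ is surjective the sequence $0\to K\to A\to B\to 0$ remains exact after applying $-\otimes_{k[I]}k$ thanks to $k[I]$-flatness of $B$; the resulting map $K/IK\to A/IA=B/IB$ is injective with zero image, so $K/IK=0$, and the same square-zero argument forces $K=0$.

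The main obstacle is really this automatic-isomorphism step. One cannot directly invoke the usual ``flat lift of an isomorphism modulo a nilpotent ideal'' criterion, because $\mathscr{G}$ itself is not assumed to be $k[I]$-flat; only the target $B$ of $\sigma^\flat$ is known to be flat. The two-stage Nakayama-type argument above is exactly what is needed to handle this asymmetry, using $I^2=0$ in the place where flatness of $A$ would otherwise be invoked.
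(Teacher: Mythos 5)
Your proof is correct and follows essentially the same route as the paper: the same $(h^*,h_*)$-adjunction identification of lifts of $\id_{\mathscr{G}_k}$ with sections of $\pi$, followed by the observation that any such lift out of the $k[I]$-flat scheme $h^*\mathscr{G}_k$ is automatically an isomorphism — your two-step Nakayama computation merely makes explicit what the paper asserts from flatness of $h^*\mathscr{G}_k$. The only point to add is that the lemma does not assume $\mathscr{G}$ affine, so before writing $A=\cO(\mathscr{G})$ you should note, as the paper does, that $\sigma$ is an affine morphism (it lifts the identity through the square-zero thickening), and then run your ring-theoretic argument over an affine open cover.
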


\begin{proof}
Let $\sigma:h^*\mathscr{G}_k\isomto \mathscr{G}$ be a rigidification and
$s:=h_*\sigma\circ\alpha_{\mathscr{G}_k}$.
Then we have $\sigma=\beta_\mathscr{G}\circ h^*s$ and by applying $i^*$
we find $\id_{\mathscr{G}_k}=i^*\sigma=\pi\circ s$ hence $s$ is a section
of $\pi$. Conversely let $s$ be a section of $\pi$ and
$\sigma:=\beta_\mathscr{G}\circ h^*s$. Then
$i^*\sigma=i^*\beta_\mathscr{G}\circ i^*h^*s=\pi \circ s=\id_{\mathscr{G}_k}$ hence
$\sigma$ lifts the identity. In particular $\sigma$ is an affine
morphism. Since moreover $h^*\mathscr{G}_k$ is flat, we conclude that
$\sigma$ is an isomorphism, hence a rigidification.
\hfill $\Box$
\end{proof}

\begin{lemma} \label{lemma:beta for rigid groups}
Let $\mathscr{G}$ be an affine, differentially flat and rigid
$k[I]$-group scheme. Then $\beta:h^*h_*\mathscr{G}\to \mathscr{G}$
is faithfully flat and we have an exact sequence:
\[
1 \too \mathrm{L}(\mathscr{G}) \too h^*h_*\mathscr{G} \stackrel{\beta}{\too} \mathscr{G} \too 1.
\]
If $\mathscr{G}$ is of finite type over $k[I]$, then $\beta$
is smooth.
\end{lemma}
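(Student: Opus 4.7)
My plan is to use rigidity to split $\beta$ scheme-theoretically into a trivial projection over $\mathscr{G}$; flatness and smoothness then reduce to the corresponding properties of a Lie algebra vector bundle, which follow from differential flatness.

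First, I invoke rigidity to fix a rigidification $\sigma: h^*\mathscr{G}_k \isomto \mathscr{G}$ with $\sigma(1)=1$. By Lemma~\ref{lemma:rigidifications vs sections}, $\sigma$ corresponds under adjunction to a section $s: \mathscr{G}_k \to h_*\mathscr{G}$ of $\pi=i^*\beta$, and as noted in the proof of that lemma, $\sigma=\beta\circ h^*s$. Consequently the scheme morphism $\tau:=h^*s\circ \sigma^{-1}: \mathscr{G}\to h^*h_*\mathscr{G}$ satisfies $\beta\circ\tau=\mathrm{id}_\mathscr{G}$, i.e. it is a scheme-theoretic section of $\beta$. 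Using $\tau$ as a trivialization, I would define
\[
\Phi: \mathrm{L}(\mathscr{G}) \times_{k[I]} \mathscr{G} \too h^*h_*\mathscr{G},\qquad (x,g)\longmapsto x\cdot\tau(g),
\]
with inverse $y\mapsto (y\cdot\tau(\beta(y))^{-1},\beta(y))$. Under $\Phi$, the morphism $\beta$ is identified with the second projection $\pr_2$, so $\beta$ is faithfully flat (respectively smooth) as soon as $\mathrm{L}(\mathscr{G})$ is flat (respectively smooth) over $k[I]$; surjectivity is already automatic from the existence of $\tau$.

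The key step is then to identify $\mathrm{L}(\mathscr{G})$ with the pullback of a Lie algebra vector bundle. Because $\sigma$ is a morphism of pointed $k[I]$-schemes, the pointed-scheme functoriality in Proposition~\ref{ker of beta}(1) yields an isomorphism $\mathrm{L}(h^*\mathscr{G}_k)\isomto \mathrm{L}(\mathscr{G})$. Combining this with the explicit description of the trivial-deformation case in Proposition~\ref{ker of beta}(4), namely $\mathrm{L}(h^*\mathscr{G}_k)\cong h^*\Lie(\mathscr{G}_k,I)$, I obtain a $k[I]$-scheme isomorphism $\mathrm{L}(\mathscr{G})\cong h^*\Lie(\mathscr{G}_k,I)$. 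This is the main subtlety of the argument: one must use the \emph{pointed-scheme} functoriality rather than the group functoriality, since rigidifications need not be group homomorphisms; thus $\mathrm{L}(\mathscr{G})$ has the underlying scheme of a trivial Lie algebra bundle even though its group structure is in general twisted.

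Finally, I read off the flatness properties from differential flatness. The hypothesis that $\Omega^1_{\mathscr{G}/k[I]}$ is flat over $k[I]$ implies by pullback along $i$ that $\Omega^1_{\mathscr{G}_k/k}$ is flat over $k$; hence the vector bundle $\Lie(\mathscr{G}_k,I)$ is flat, and so is its pullback $\mathrm{L}(\mathscr{G})\cong h^*\Lie(\mathscr{G}_k,I)$ over $k[I]$. This yields faithful flatness of $\beta$ and the exact sequence. If in addition $\mathscr{G}$ is of finite type, then $\Omega^1_{\mathscr{G}_k/k}$ is finitely presented and flat, hence locally free of finite rank, so $\mathrm{L}(\mathscr{G})$ is a smooth vector bundle and $\beta$ is smooth. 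The expected main obstacle lies in the identification $\mathrm{L}(\mathscr{G})\cong h^*\Lie(\mathscr{G}_k,I)$, requiring the careful combination of pointed-scheme functoriality with the case of trivial deformations.
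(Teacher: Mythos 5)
Your proof is correct, and it reaches the conclusion by a genuinely different route than the paper's. The paper also starts from the rigidification $\sigma$ and the corresponding section $s$ of $\pi=i^*\beta$ (Lemma~\ref{lemma:rigidifications vs sections}), but it only splits the special fibre: using Proposition~\ref{ker of beta}(3) it identifies $\ker\pi$ with $\Lie(\mathscr{G}_k,I)$, deduces that $h_*\mathscr{G}\simeq\Lie(\mathscr{G}_k,I)\times\mathscr{G}_k$ is $k$-flat and $\pi$ faithfully flat, and then invokes the crit\`ere de platitude par fibres in the nilpotent case (Stacks Project, Tag~06A5) to propagate flatness from $\pi$ to $\beta$; smoothness is then obtained from flatness of $\mathrm{L}(\mathscr{G})=\ker\beta$ together with smoothness of its special fibre. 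You instead split $\beta$ itself over $k[I]$ via the scheme section $\tau=h^*s\circ\sigma^{-1}$, getting $h^*h_*\mathscr{G}\cong\mathrm{L}(\mathscr{G})\times_{k[I]}\mathscr{G}$ with $\beta$ becoming $\pr_2$, and you identify $\mathrm{L}(\mathscr{G})\cong h^*\Lie(\mathscr{G}_k,I)$ as $k[I]$-schemes by combining the pointed-scheme functoriality of Proposition~\ref{ker of beta}(1) (applied to $\sigma$ and $\sigma^{-1}$, which is legitimate since $\sigma$ is only a pointed isomorphism, not a group homomorphism) with the trivial-deformation case (4). This avoids the fibral flatness criterion entirely and yields a bit more, namely the scheme structure of $\mathrm{L}(\mathscr{G})$ over all of $k[I]$ rather than only its special fibre, at the cost of using parts (1) and (4) of that proposition instead of part (3). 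One small remark on a step you and the paper both state at the same level of detail: passing from flatness of $\Omega^1_{\mathscr{G}_k/k}$ to flatness (resp.\ local freeness of finite rank, in the finite type case) of the fibre $\Lie(\mathscr{G}_k,I)$ really concerns $\omega_{\mathscr{G}_k}=e^*\Omega^1_{\mathscr{G}_k/k}$, and uses translation invariance $\Omega^1_{\mathscr{G}_k/k}\simeq\cO_{\mathscr{G}_k}\otimes_k\omega_{\mathscr{G}_k}$ together with faithful flatness of $\mathscr{G}_k$ over $k$; this is implicit in the paper's proof as well, so it is not a gap specific to your argument.
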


\begin{proof}
Again we put $\pi=i^*\beta:h_*\mathscr{G}\to \mathscr{G}_k$.
Let $\sigma:h^*\mathscr{G}_k\isomto \mathscr{G}$ be a rigidification and
$s:\mathscr{G}_k\to h_*\mathscr{G}$ the corresponding section of $\pi$,
see Lemma~\ref{lemma:rigidifications vs sections}.
By Proposition~\ref{ker of beta}(3) the kernel of $\pi$
is $\Lie(\mathscr{G}_k)$ which is $k$-flat. The section $s$ provides
an isomorphism of $k$-schemes $\Lie(\mathscr{G}_k)\times \mathscr{G}_k\simeq h_*\mathscr{G}$
which shows that $h_*\mathscr{G}$ is $k$-flat so that
$h^*h_*\mathscr{G}$ is $k[I]$-flat. It follows also that
$\pi:h_*\mathscr{G}\to \mathscr{G}_k$ is faithfully flat and by the ``crit\`ere de
platitude par fibres'' in the nilpotent case
(\cite[\href{https://stacks.math.columbia.edu/tag/06A5}{Tag~06A5}]{SP})
we deduce that the morphism $\beta$ is faithfully flat.
Finally if $\mathscr{G}$ is of finite type over $k[I]$, then the
special fibre of $\mathrm{L}(\mathscr{G})$ is the smooth vector group
$\Lie(\mathscr{G}_k)$, hence $\mathrm{L}(\mathscr{G})$ is smooth and so is
$\beta$.
\hfill $\Box$
\end{proof}

\begin{example} \label{example:failure of surjectivity}
Here is an example where the result above fails, for a
non-rigid group. Assume $k$ is a field of characteristic
$p>0$. Let $I=k\eps$ be free of rank~1.
Let $\mathscr{G}$ be the kernel of the endomorphism
$\GG_a\to \GG_a$, $x\mapsto x^p-\eps x$. Then $(h^*h_*\mathscr{G})(R)$
is the set of elements $a\oplus b\eps \in R\oplus R\eps$
such that $(a\oplus b\eps)^p=\eps(a\oplus b\eps)$. This equation
is equivalent to $a^p=\eps a$, hence $a=0$.
Thus $(a,b)\mapsto b$ is an isomorphism
$h^*h_*\mathscr{G}\isomto\GG_{a,k[\eps]}$. The map $(h^*h_*\mathscr{G})(R)\to \mathscr{G}(R)$
sends $b$ to $b\eps_R$. In other words, if we let
$K_\eps\simeq \Spec(k[\eps][x]/(\eps x))$ denote the kernel
of $\eps:\GG_{a,k[\eps]}\to\GG_{a,k[\eps]}$,
then the sequence of the lemma is
$1 \to K_\eps \to \GG_{a,k[\eps]} \stackrel{\eps}{\too} \mathscr{G} \to 1$.
Here, the map $\eps:\GG_{a,k[\eps]}\to \mathscr{G}$ is not flat
so this is not an exact sequence of flat group schemes.
\hfill $\square$
\end{example}

\begin{prop} \label{prop:weil restriction functor}
For each $\mathscr{G}\in\Gr\!/k[I]$, the restriction via $i^*$
of the exact sequence of Lemma~\ref{lemma:beta for rigid groups}
gives $E:=h_*\mathscr{G}$ the structure of an object of
$\Ext(I)/k$. Hence Weil restriction gives a functor:
\[
h_*:\Gr\!/k[I]\to \Ext(I)/k.
\]
\end{prop}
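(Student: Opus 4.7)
The strategy is to apply $i^*$ to the short exact sequence
\[
1 \too \mathrm{L}(\mathscr{G}) \too h^*h_*\mathscr{G} \stackrel{\beta}{\too} \mathscr{G} \too 1
\]
furnished by Lemma~\ref{lemma:beta for rigid groups}, identify each term using the results of Proposition~\ref{ker of beta}, and then record functoriality. Using the general identity $i^*h^*=\id$ and the definition $\mathscr{G}_k=i^*\mathscr{G}$, the middle and rightmost terms become $h_*\mathscr{G}$ and $\mathscr{G}_k$, and the map $i^*\beta$ is the canonical adjunction $\pi:h_*\mathscr{G}\to\mathscr{G}_k$. Since $\beta$ is faithfully flat and faithful flatness is stable under base change, $\pi$ is faithfully flat. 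By Proposition~\ref{ker of beta}(3) the kernel of $\pi$ identifies canonically with $\Lie(\mathscr{G}_k,I)$.

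Before invoking this as giving an object of $\Ext(I)/k$, I would check that $\mathscr{G}_k$ lies in $\Gr\!/k$: it is affine, flat over $k$ as the pullback of a flat scheme, and its sheaf of relative differentials $\Omega^1_{\mathscr{G}_k/k}$ is the pullback of $\Omega^1_{\mathscr{G}/k[I]}$ along $i$, hence flat over $k$ by the differential flatness of $\mathscr{G}$. Next, the sequence
\[
1 \too \Lie(\mathscr{G}_k,I) \too h_*\mathscr{G} \stackrel{\pi}{\too} \mathscr{G}_k \too 1
\]
is an exact sequence of affine $k$-group schemes whose right arrow is faithfully flat; such a quotient is automatically an fpqc torsor for the left-translation action of its kernel, so the torsor condition in the definition of $\Ext(I)/k$ is satisfied. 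Proposition~\ref{ker of beta}(3) further asserts that the conjugation action of $h_*\mathscr{G}$ on the kernel factors through $\mathscr{G}_k$ via the adjoint representation, which is precisely the $G$-module structure on $\Lie(G,I)$ used in the definition of $\Ext(I)/k$.

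For the functor statement, let $\varphi:\mathscr{G}\to\mathscr{G}'$ be a morphism in $\Gr\!/k[I]$. Naturality of the unit--counit maps gives a commutative square relating $\beta_\mathscr{G}$ and $\beta_{\mathscr{G}'}$, which upon applying $i^*$ and passing to kernels (using the functoriality of $\mathrm{L}$ from Proposition~\ref{ker of beta}(1)) yields a morphism of short exact sequences:
\[
\xymatrix{
1 \ar[r] & \Lie(\mathscr{G}_k,I) \ar[r] \ar[d] & h_*\mathscr{G} \ar[r] \ar[d]^{h_*\varphi} & \mathscr{G}_k \ar[r] \ar[d]^{\varphi_k} & 1 \\
1 \ar[r] & \Lie(\mathscr{G}'_k,I) \ar[r] & h_*\mathscr{G}' \ar[r] & \mathscr{G}'_k \ar[r] & 1.}
\]
The remaining task, and the only genuinely technical point, is to verify that the induced map on kernels coincides with $\Lie(\varphi_k)$. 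This requires unwinding the identification $i^*\mathrm{L}(\mathscr{G})\isomto\Lie(\mathscr{G}_k,I)$ of Proposition~\ref{ker of beta}(3), which sends a point $v$ over a $k$-algebra $R$ with $IR=0$ to the derivation $v-d_R$; since the map $i^*\mathrm{L}(\varphi)$ is given by precomposition with the Hopf-algebra map attached to $\varphi$, the image of $v-d_R$ is indeed the $\varphi_k$-pullback derivation, i.e.\ $\Lie(\varphi_k)(v-d_R)$. This identifies the left vertical arrow with $\diff(\varphi_k)$, completing the proof that $h_*\mathscr{G}\in\Ext(I)/k$ and that $h_*$ defines a functor $\Gr\!/k[I]\to\Ext(I)/k$.
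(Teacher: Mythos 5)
Your proposal is correct and follows essentially the same route as the paper: restrict the exact sequence of Lemma~\ref{lemma:beta for rigid groups} along $i^*$, identify the kernel with $\Lie(\mathscr{G}_k,I)$ and the induced action with the adjoint representation via Proposition~\ref{ker of beta}, and deduce functoriality from the explicit description of $\mathrm{L}(\mathscr{G})$. The extra verifications you spell out (that $\mathscr{G}_k$ lies in $\Gr\!/k$, the torsor condition, and that the map on kernels is $\Lie(\varphi_k)$) are points the paper leaves implicit, and your treatment of them is sound.
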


\begin{proof}
From Lemma~\ref{lemma:beta for rigid groups} we have an exact
sequence $1 \to \mathrm{L}(\mathscr{G}) \to h^*h_*\mathscr{G} \to \mathscr{G} \to 1$.
It follows from point~(2) in Proposition~\ref{ker of beta}
that when we restrict to the closed fibre, we obtain an exact
sequence:
\[
1\too \Lie(\mathscr{G}_k,I) \too E \too \mathscr{G}_k \too 1
\]
where the $\mathscr{G}_k$-action on $\Lie(\mathscr{G}_k)$ induced
by the extension is the adjoint representation. The same
reference proves that this extension is functorial in $\mathscr{G}$.
More precisely, if $u:\mathscr{G}\to \mathscr{G}'$ is a morphism of 
affine, differentially flat, rigid $k[I]$-group schemes,
then we obtain a morphism between the extensions $E=h_*\mathscr{G}$
and $E'=h_*\mathscr{G}'$ as follows:
\[
\xymatrix{
1 \ar[r] & \Lie(\mathscr{G}_k,I) \ar[r] \ar[d]^{\diff\!\psi}
& E \ar[r] \ar[d]^{\varphi} & \mathscr{G}_k \ar[r] \ar[d]^{\psi} & 1 \\
1 \ar[r] & \Lie(\mathscr{G}'_k,I) \ar[r] & E' \ar[r] & \mathscr{G}'_k \ar[r] & 1}
\]
where $\varphi=h_*u$ and $\psi=u_k=i^*u$, the restriction
of $u$ along $i:\Spec(k)\into\Spec(k[I])$.
\hfill $\Box$
\end{proof}

We draw a corollary that will be useful in
Section~\ref{section:eq of cats}.

\begin{corollary} \label{coro1}
Let $Y$ be an affine, flat, rigid $k[I]$-scheme and
$\VV(Y)=\VV(Y/k[I])$, $\VV(Y_k)=\VV(Y_k/k)$ the vector
bundle envelopes. Then we have a split exact sequence
of flat $k$-group schemes:
\[
0 \too \VV(Y_k)\otimes_{\OO_k}\VV(I^{\smallvee})
\too h_*\VV(Y) \stackrel{\pi}{\too} \VV(Y_k) \too 0.
\]
Moreover $h_*\VV(Y)$ is flat as an $\OO_k[I]$-module scheme
and the surjection $\pi$ is isomorphic to the map given by
reduction modulo~$I$.
\end{corollary}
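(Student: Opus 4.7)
The approach is to use the rigidity of $Y$ to reduce to a trivial deformation and then to compute the Weil restriction of a pullback vector bundle directly. First, since the formation of vector bundle envelopes commutes with flat base change (a direct consequence of flat base change for $f_*\cO_Y$), any rigidification $\sigma:h^*Y_k \isomto Y$ induces an isomorphism of $\OO_{k[I]}$-module schemes $h^*\VV(Y_k) \isomto \VV(Y)$. Applying $h_*$ and recalling that $h_*\OO_{k[I]} = \OO_k[I]$, this gives an isomorphism of $\OO_k[I]$-module schemes $h_*h^*\VV(Y_k) \simeq h_*\VV(Y)$, so it suffices to compute $h_*h^*\VV(Y_k)$.

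The key step, in the spirit of Proposition~\ref{prop:group algebra}(4) but for modules rather than algebras, is to establish a canonical isomorphism of $\OO_k[I]$-module schemes
\[
\VV(Y_k) \otimes_{\OO_k} h_*\OO_{k[I]} \isomto h_*h^*\VV(Y_k).
\]
This is a direct calculation: evaluated on a $k$-algebra $R$, the source is $\Hom_k(B,R) \otimes_R R[I]$ and the target is $\Hom_k(B, R[I])$, where $B$ denotes the function ring of $Y_k$; these coincide because $R[I]$ is a finite free $R$-module (using the hypothesis that $I$ is finite free over $k$). Compatibility with the $\OO_k$- and $\OO_k[I]$-module structures is automatic from the functorial construction.

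Combining the two steps with the decomposition $h_*\OO_{k[I]} = \OO_k \oplus \VV(I^\smallvee)$ recalled in~\ref{defs:base ring scheme} yields
\[
h_*\VV(Y) \simeq \VV(Y_k) \otimes_{\OO_k} \OO_k[I] \simeq \VV(Y_k) \oplus \big(\VV(Y_k) \otimes_{\OO_k} \VV(I^\smallvee)\big),
\]
which is the desired split short exact sequence of $k$-group schemes, the section coming from the first summand. The projection $\pi$ is induced by the augmentation $h_*\OO_{k[I]} \to \OO_k$, i.e.\ reduction modulo $I$. Flatness of $h_*\VV(Y)$ as an $\OO_k[I]$-module scheme is immediate from this identification together with the fact that $\VV(Y_k)$ is $\OO_k$-flat (since $Y_k = Y \otimes_{k[I]} k$ is $k$-flat); flatness of both outer terms over $k$ then gives flatness of the middle term over $k$. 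The only real work lies in the module-scheme identity above, which is a slight extension of Proposition~\ref{prop:group algebra}(4); no genuine obstacle is expected.
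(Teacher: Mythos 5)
Your proof is correct and takes essentially the same route as the paper: reduce via the rigidification and base change to $h_*h^*\VV(Y_k)$, identify it with $\VV(Y_k)\otimes_{\OO_k}\OO_k[I]$ exactly as in Proposition~\ref{prop:group algebra}(4), and read off the splitting, the $\OO_k[I]$-flatness and the reduction-mod-$I$ description of $\pi$. The only (harmless) difference is that the paper first invokes Proposition~\ref{prop:weil restriction functor} to obtain the exact sequence with its canonical surjection $\pi=i^*\beta_{\VV(Y)}$ and then identifies it, whereas you build the sequence directly from the explicit splitting, the canonical map visibly corresponding to the augmentation under your identification.
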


\begin{proof}
Recall from paragraph~\ref{defs:base ring scheme}
that $\OO_k[I]:=h_*\OO_{k[I]}$.
By the assumptions on $Y$, the vector bundle $\VV(Y)$
is affine, differentially flat and rigid over $k[I]$.
Thus Proposition~\ref{prop:weil restriction functor}
yields the displayed exact sequence. Using a rigidification
for $Y$ and base change for the functor~$\VV$, we have
$h_*\VV(Y)\simeq h_*\VV(h^*Y_k)\simeq h_*h^*\VV(Y_k)$.
Like in Proposition~\ref{prop:group algebra}(4), we have
an isomorphism of $\OO_k$-modules
$\VV(Y_k)\otimes_{\OO_k} \OO_k[I]\isomto h_*h^*\VV(Y_k)$ defined by
$x\otimes a\mapsto ax$. Given that $\VV(Y_k)$ is $\OO_k$-flat,
this proves that $h_*\VV(Y)$ is $\OO_k[I]$-flat. Even more,
we have an isomorphism
$h_*\VV(Y)\simeq\VV(Y_k)\oplus
(\VV(Y_k)\otimes_{\OO_k}\VV(I^{\smallvee}))$
and $\pi$ is the projection onto the first factor, i.e.
the map given by reduction modulo~$I$. \hfill $\Box$
\end{proof}

\section{Weil extension}
\label{section:functor tit}

In this section, we construct a functor $h^{\mbox{\tiny +}}$
called {\em Weil extension} which is a quasi-inverse
to the functor~$h_*$ of Weil restriction described in
the previous section. The idea
behind the construction is that one can recover a
$k[I]$-group scheme $\mathscr{G}$ from the extension $E=h_*\mathscr{G}$
by looking at the target of the adjunction
$\beta_{\mathscr{G}}:h^*E=h^*h_*\mathscr{G}\to\mathscr{G}$. In turn, in order to
reconstruct the faithfully flat morphism $\beta_{\mathscr{G}}$
it is enough to know its kernel $K$. In the case where
$\mathscr{G}$ is a constant group $h^*G$, which in other words is the
case where~$E$ is a
tangent bundle $\T(G,I)$, Proposition~\ref{ker of beta}(4)
hints the correct expression $K=\{(x,g) \in h^*E;\ g=\exp(-x)\}$.
The definition of $K$ for general extensions
$1\to \Lie(G,I) \to E \to G \to 1$
where $G$ is an affine, differentially flat $k$-group scheme,
builds on this intuition.

\subsection{Hochschild extensions}

The construction of an extension from a 2-cocycle is
well-known; we recall it to set up the notations.
Recall from \cite[chap.~II, \S~3, no~2]{DG70} that if $G$ is
a $k$-group functor and $M$ is a $k$-$G$-module functor,
then a {\em Hochschild extension} or simply {\em $H$-extension}
of $G$ by $M$ is an exact sequence of group functors
\[
1\too M\stackrel{i}{\too} E\stackrel{\pi}{\too} G
\]
such that $\pi$ has a section (which is not required to
be a morphism of groups). From a given section $s:G\to E$,
we can produce a unique morphism $c:G\times G\to M$ such that
$i(c(g,g')):=s(g)s(g')s(gg')^{-1}$.
This is a 2-cocycle, i.e. it satisfies the identity
\[
c(g,g')+c(gg',g'')=g\cdot c(g',g'')+c(g,g'g'').
\]
Note that we may always replace $s$ by the section
$G\to E$, $g\mapsto s(1)^{-1}s(g)$ to obtain a section
such that $s(1)=1$. When this is the
case, we have $c(g,1)=c(1,g')=0$ for all $g,g'$ and we say
that $c$ is {\em normalized}.
Conversely, starting from a cocycle $c$, the functor
$E_c=M\times G$ with multiplication defined by
\[
(x,g)\cdot (x',g'):=\big(x+g\cdot x'+c(g,g'),gg'\big)
\]
is an $H$-extension. 
The map $s:G\to E_c$, $g\mapsto (0,g)$ is a possible
choice of section for $\pi$. It follows from the previous
comments that we may always change the cocycle into a
normalized cocycle.

\subsection{Kernel of the adjunction, reprise}
\label{cocycles}

In this subsection, we prepare the construction of the
kernel of the adjunction map $\beta_{h^{\mbox{\tiny +}}E}$
of the (yet to be produced) Weil extension
$h^{\mbox{\tiny +}}E$.
The end result is in Proposition~\ref{prop:subgroup K}
of the next subsection. Note that in spite of the similarity
of titles, the viewpoint is different from that of
Subsection~\ref{subsection:the map beta}.

Let~$G$ be an affine $k$-group scheme, and $\Lie(G,I)$
its Lie algebra relative to $I$, viewed as an affine
$k$-group scheme with the adjoint action of $G$. To any
2-cocycle $c:G\times G\too \Lie(G,I)$ we attach as before
an $H$-extension $E_c=\Lie(G,I)\times G$ with multiplication:
\[
(x,g)\cdot (x',g'):=\big(x+\Ad(g)x'+c(g,g'),gg'\big).
\]
Our group $E_c$ has a structure of $H$-extension:
\[
\xymatrix@C=6mm{
1 \ar[r] & \Lie(G,I) \ar[rr]^-{x\mapsto (x,1)} & &
E_c \ar[rr]^-{(x,g)\mapsto g} & & G \ar[r] & 1.}
\]

The following result is the heart of the construction of
the Weil extension functor $h^{\mbox{\tiny +}}$. We point out
that among the groups $K_\lambda(E_c)$ introduced here, it is
especially $K_{-1}(E_c)$ that will be relevant in the sequel,
as Proposition~\ref{ker of beta}(4) shows. However, we include the
whole family $K_\lambda(E_c)$ since it comes with no extra cost
and brings interesting insight, in the sense that it ultimately
provides an explicit linear path in the $\OO_k$-module stack
$\Gr/k[I]$.

\begin{prop} \label{prop:sous-groupe Klambda(E_c)}
Let $h:\Spec(k[I])\to\Spec(k)$ be the $k$-scheme of dual
numbers. Fix $\lambda \in k$.
\begin{trivlist}
\itemn{1} Let $G$ be an affine $k$-group scheme and let $E_c$ be the
$H$-extension constructed out of a normalized 2-cocycle
$c:G\times G\too \Lie(G,I)$.
Let $K_\lambda(E_c)\subset h^*E_c$ be the subfunctor defined by:
\[
K_\lambda(E_c)=\{ (x,g) \in h^*E_c\,;\,g=\exp(\lambda x)\}.
\]
Then $K_\lambda(E_c)$ is a closed normal sub-$k[I]$-group
scheme of $h^*E_c$.
\itemn{2} Let $G,G'$ be affine $k$-group schemes and
$E_c,E_{c'}$ be the $H$-extensions constructed out of some
chosen normalized 2-cocycles $c,c'$. Let $f:E_c\to E_{c'}$
be a morphism of extensions:
\[
\xymatrix{
1 \ar[r] & \Lie(G,I) \ar[r] \ar[d]^{\Lie(\alpha)} &
E_c \ar[r] \ar[d]^f & G \ar[r] \ar[d]^{\alpha} & 1 \\
1 \ar[r] & \Lie(G',I) \ar[r] & E_{c'} \ar[r] & G' \ar[r] & 1}
\]
Then $(h^*f)(K_\lambda(E_c))\subset K_\lambda(E_{c'})$,
with equality if $f$ is an isomorphism.
\end{trivlist}
\end{prop}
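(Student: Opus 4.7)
The plan is to reduce everything to one key observation: for any morphism $\phi:X\to\Lie(G,I)$ of $k$-schemes vanishing at a base point $x_0$, any $k[I]$-algebra $R$, and any $R$-point $x$ of $X$ with $x\equiv x_0\pmod{IR}$, one has $\phi(x)=0$. The reason is that $\Lie(G,I)=\Lie(G)\otimes I$ has $R$-valued points annihilated by $IR$, while $x-x_0$ has coordinates in $IR$; expanding $\phi(x)-\phi(x_0)$ as a polynomial in $x-x_0$ produces only terms in $I\cdot IR=I^2R=0$ or in $(IR)^{\ge 2}=0$. From this I would derive three specialized consequences. First, $c(h,g)=c(g,h)=0$ whenever $g\equiv 1\pmod{IR}$, by applying the observation to $g\mapsto c(h,g)$ and $g\mapsto c(g,h)$, which vanish at $1$ by normalization. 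Second, $\Ad(g)=\id$ on $\Lie(G,I)$ whenever $g\equiv 1\pmod{IR}$, by applying it to $g\mapsto\Ad(g)y-y$ for fixed $y$. Third, any morphism of extensions $f:E_c\to E_{c'}$ has the shape $f(x,g)=(\Lie(\alpha)(x)+b(g),\alpha(g))$ for a unique $k$-morphism $b:G\to\Lie(G',I)$ with $b(1)=0$, hence $b(g)=0$ whenever $g\equiv 1\pmod{IR}$.

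For part (1), closedness of $K_\lambda(E_c)$ in $h^*E_c$ is automatic: it is the equalizer of $(x,g)\mapsto g$ and $(x,g)\mapsto\exp(\lambda x)$ with target the separated scheme $h^*G$. To show it is a subgroup, I would take $(x,g),(x',g')\in K_\lambda(E_c)(R)$ with $g=\exp(\lambda x)\equiv 1$ and $g'=\exp(\lambda x')\equiv 1\pmod{IR}$; then the product $(x+\Ad(g)x'+c(g,g'),gg')$ has Lie component $x+x'$ by the first two consequences, while its group component satisfies $gg'=\exp(\lambda x)\exp(\lambda x')=\exp(\lambda(x+x'))$ by the additivity of $\exp_{G,I}$ on the abelian group $\Lie(G,I)$ recalled in the Appendix (valid because $I^2=0$). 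Inversion works in the same way. For normality, I would conjugate a point $(x,g)\in K_\lambda(E_c)(R)$ by a general $(y,h)\in(h^*E_c)(R)$, observe that the $G$-component of the result is $hgh^{-1}$, and combine the 2-cocycle identity with the first two consequences to reduce the new Lie component exactly to $\Ad(h)x$; then the equivariance $h\exp(\lambda x)h^{-1}=\exp(\lambda\Ad(h)x)$ closes the argument.

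Part (2) will then follow from the third consequence together with the functoriality of $\exp$: for $(x,g)\in K_\lambda(E_c)(R)$ one has $\alpha(g)=\alpha(\exp(\lambda x))=\exp(\lambda\Lie(\alpha)(x))=\exp(\lambda(\Lie(\alpha)(x)+b(g)))$ since $b(g)=0$, hence $f(x,g)\in K_\lambda(E_{c'})$; when $f$ is an isomorphism, applying the same reasoning to $f^{-1}$ yields the reverse inclusion and equality. I expect the main obstacle to be the normality verification, where the cocycle identity must be carefully combined with the vanishing consequences to reduce the conjugate to exactly $\Ad(h)x$ on the Lie side; the underlying mechanism is always ``$I$-valued times $I$-small equals zero'', but the bookkeeping with the cocycle identity for $c$ is where the real work sits.
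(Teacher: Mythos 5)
Your overall strategy (stability under product, inverse and conjugation via exponential identities and the cocycle identity, then functoriality via the shape of morphisms of extensions) is the same as the paper's, but the key observation on which you base everything is false, and with it all three of your "consequences". A pointed $k$-morphism $\phi:X\to\Lie(G,I)$ does \emph{not} vanish on $R$-points congruent to the base point modulo $IR$: the $R$-points of $\Lie(G,I)\simeq\Lie G\otimes\VV(I^\smallvee)$ are not killed by $IR$, so the linear term in your expansion does not die. Concretely, take $G=\GGa$ in characteristic $p$, $I=k\eps$, identify $\Lie(\GGa,k\eps)\simeq\GGa$, and let $c(u,v)=\sum_{i=1}^{p-1}\tfrac1p{p\choose i}u^iv^{p-i}$ be the (normalized) Witt cocycle; for $R=k[\eps]$ and $g=\exp(r)=\eps r$ (congruent to the identity mod $IR$) one gets $c(g,h)=\eps r h^{p-1}$, which is nonzero for $r=h=1$. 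Likewise $\Ad(g)\neq\id$ for $g\equiv 1 \pmod{IR}$: in $\GL_2$, $\Ad(1+\eps X)y=y+\eps[X,y]$. What is true is only the weaker statement that such values are $I$-multiples, hence killed by a further multiplication by a section of $I$ or by applying $\exp$ again: this is exactly Lemma~\ref{lemma:cocycle vanishes with eps}, Lemma~\ref{lemma:properties of cocycle} and Proposition~\ref{prop:elementary properties of exp}(4)--(6), which are the tools the paper actually uses.

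Consequently your intermediate exact identities are wrong: the Lie component of $(x,g)\cdot(x',g')$ is $x+\Ad(g)x'+c(g,g')$ and genuinely differs from $x+x'$; the conjugate of $(x,g)$ by a general $(y,h)$ does not have Lie component exactly $\Ad(h)x$; and the $1$-cochain in a morphism of extensions does not vanish on exponentials. The proposition is nevertheless true because membership in $K_\lambda$ only tests the single equation $g=\exp(\lambda\cdot(\mbox{Lie component}))$, and every error term above (brackets, i.e.\ $I$-multiples, and cocycle or cochain values at exponentials) dies upon applying $\exp(\lambda\,\cdot\,)$. That is how the paper argues: it never simplifies the Lie components on the nose, but proves $\exp(\lambda\Ad(g)x')=\exp(\lambda x')$, $\exp(\lambda c(g,g'))=1$ when $g$ is an exponential (together with the variants in Lemma~\ref{lemma:properties of cocycle}(2), which are needed in the normality step where terms like $c(h,gh^{-1})$ with $h$ arbitrary appear), and $\exp_{G'}(\lambda\varphi(g))=1$ in the functoriality step. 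To repair your argument you must replace each claimed vanishing by the corresponding "vanishing after $\exp$" statement; as written, the proof has a genuine gap at its foundation.
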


\bigskip

When the extension $E_c$ is clear from context, we write
$K_\lambda$ instead of $K_\lambda(E_c)$.
We will prove the proposition after a few preliminaries.
First of all, for the convenience of the reader, we recall
the description of morphisms of extensions, in the abstract
group setting for simplicity.

\begin{lemma} \label{lemma:morphisms of extensions}
Let $\alpha:G\to G'$ be a morphism of groups and
$\delta:L\to L'$ be a morphism from a $G$-module to a
$G'$-module which is $\alpha$-equivariant.
Let $E\in \cE xt(G,L)$ and $E'\in \cE xt(G',L')$ be two
extensions.
\begin{trivlist}
\itemn{1} There exists a morphism of extensions
$f:E\to E'$, i.e. a diagram
\[
\xymatrix{
1 \ar[r] & L \ar[r] \ar[d]^{\delta} &
E \ar[r] \ar@{..>}[d]^f & G \ar[r] \ar[d]^{\alpha} & 1 \\
1 \ar[r] & L' \ar[r] & E' \ar[r] & G' \ar[r] & 1,}
\]
if and only if $\alpha^*[E']=\delta_*[E]$
in $\mathrm{H}^2(G,L')$, and if this condition holds then the set
of morphisms is a principal homogeneous space under
the set of 1-cocycles $\mathrm{Z}^1(G,L')$. More precisely, assume
that we describe $E$ with a normalized
cocycle $c:G\times G\to L$ so that $E\simeq L\times G$ with
multiplication $(x,g)\cdot (x',g')=(x+g\cdot x'+c(g,g'),gg')$,
and we describe $E'$ similarly with a normalized cocycle $c'$.
Then all morphisms $f:E\to E'$ are of the form
$f(x,g)=(\delta(x)+\varphi(g),\alpha(g))$
for a unique 1-cochain $\varphi:G\to L'$ such that
$\partial \varphi=c'\circ\alpha-\delta\circ c$.
\itemn{2} If $E,E'$ are two extensions of $G$ by $L$,
then the set of morphisms of extensions $E\to E'$ is a
principal homogeneous space under the group $\mathrm{Z}^1(G,L)$,
more precisely all morphisms are of the form
$f(x,g)=(x+\varphi(g),g)$ for a unique $\varphi\in \mathrm{Z}^1(G,L)$.
All of them are isomorphisms.
\itemn{3} Assume that the extension is trivial, so that
$[E]=0\in \mathrm{H}^2(G,L)$. Then all group sections $G\to E$ of
the extension are of the form $s(x,g)=(\varphi(g),g)$ for a
unique $\varphi\in \mathrm{Z}^1(G,L)$ such that $\partial \varphi=c$.
\end{trivlist}
\end{lemma}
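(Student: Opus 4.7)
The plan is to proceed in the order of the three assertions. For (1), I would use the normalized cocycle presentations $E \simeq L\times G$ and $E' \simeq L'\times G'$ to reduce the description of $f$ to a computation with coordinates. The left square of the diagram forces $f(x,1) = (\delta(x),1)$ and the right square forces the second coordinate of $f(0,g)$ to be $\alpha(g)$, so writing the first coordinate as $\varphi(g)$ for some set-theoretic map $\varphi:G\to L'$, multiplicativity applied to $(x,g) = (x,1)\cdot(0,g)$ yields $f(x,g) = (\delta(x)+\varphi(g), \alpha(g))$. Normalization $c(1,1)=0=c'(1,1)$ together with $f(1)=1$ forces $\varphi(1)=0$.

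Next I would compute $f((x,g)(x',g'))$ and $f(x,g)\cdot f(x',g')$ in $L'\times G'$ and compare first coordinates. Using the $\alpha$-equivariance of $\delta$ to simplify $\delta(g\cdot x') = \alpha(g)\cdot\delta(x')$, the terms involving $x,x'$ match automatically, and the remaining identity is exactly
\[
\varphi(g) + \alpha(g)\cdot\varphi(g') - \varphi(gg') = c'(\alpha(g),\alpha(g')) - \delta(c(g,g')),
\]
i.e.\ $\partial\varphi = c'\circ\alpha - \delta\circ c$, where the coboundary is taken with respect to the $G$-action on $L'$ induced by $\alpha$. Hence a morphism exists iff the class $[c'\circ\alpha - \delta\circ c]$ vanishes in $\mathrm{H}^2(G,L')$, i.e.\ $\alpha^*[E'] = \delta_*[E]$; and when it does, two solutions $\varphi,\varphi'$ satisfy $\partial(\varphi-\varphi') = 0$, so they differ by a $1$-cocycle. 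This gives the principal homogeneous structure under $\mathrm{Z}^1(G,L')$.

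For (2) I would specialize (1) to $\alpha = \id_G$, $\delta = \id_L$, so that the equation becomes $\partial\varphi = c-c = 0$ and $\varphi \in \mathrm{Z}^1(G,L)$ indeed. That any such $f$ is an isomorphism can be seen by explicit construction: the map $f^{-1}(x,g):=(x-\varphi(g),g)$ corresponds to the $1$-cocycle $-\varphi$ by the parametrization just established, hence is a morphism of extensions, and it is a left and right inverse to $f$ by direct calculation. Alternatively one can invoke the short five lemma.

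For (3), a section $s:G\to E$ of $\pi$ is in particular a scheme morphism lifting $\id_G$, so in the coordinates $E\simeq L\times G$ it takes the form $s(g)=(\varphi(g),g)$ for a uniquely determined map $\varphi:G\to L$. Computing $s(g)\cdot s(g') = (\varphi(g)+g\cdot\varphi(g')+c(g,g'),\,gg')$ and comparing with $s(gg')=(\varphi(gg'),gg')$, the homomorphism condition is precisely $\partial\varphi = c$. When $[E]=0$ in $\mathrm{H}^2(G,L)$ the cocycle $c$ is a coboundary, so such $\varphi$ exists. The only real obstacle throughout is bookkeeping of sign conventions for the coboundary operator, which must be chosen consistently with the displayed formula of the lemma; the underlying computations are mechanical.
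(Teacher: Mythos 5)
Your proposal is correct and follows essentially the same route as the paper: write a morphism in the coordinates given by the normalized cocycle presentations, deduce the form $f(x,g)=(\delta(x)+\varphi(g),\alpha(g))$, translate multiplicativity (using the $\alpha$-equivariance of $\delta$) into the coboundary identity $\partial\varphi=c'\circ\alpha-\delta\circ c$, and obtain (2) and (3) as specializations. The only quibble is the sign of the left-hand side in your displayed identity, which must match the convention for $\partial$ used in the statement, as you yourself note; this does not affect the existence criterion or the $\mathrm{Z}^1$-torsor structure.
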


\begin{proof}
Any morphism of extensions can be written  as
$f:L\times G\to L'\times G'$,
$(x,g)\mapsto (u(x,g),\alpha(g))$ with $u(x,1)=\delta(x)$.
The property that $f$ is a morphism of
groups translates into the identity:
\[
u(x_1+g_1\cdot x_2+c(g_1,g_2),g_1g_2)=
u(x_1,g_1)+\alpha(g_1)\cdot u(x_2,g_2)+c'(\alpha g_1,\alpha g_2).
\]
Setting $x_1=x$, $x_2=0$, $g_1=1$, $g_2=g$, and
$\varphi(g):=u(0,g)$ we find $u(x,g)=\delta(x)+\varphi(g)$
for all $x,g$. The above identity implies
$\varphi(g_1g_2)-\varphi(g_1)-\alpha(g_1)\cdot\varphi(g_2)
=c'(\alpha g_1,\alpha g_2)-\delta(c(g_1,g_2))$.
This means that $\partial \varphi=c'\circ\alpha-\delta\circ c$
as claimed in~(1).
Considering the particular case of morphisms
\[
\xymatrix{
1 \ar[r] & L \ar[r] \ar[d]^{\id} &
E \ar[r] \ar@{..>}[d] & G \ar[r] \ar[d]^{\id} & 1 \\
1 \ar[r] & L \ar[r] & E' \ar[r] & G \ar[r] & 1}
\]
we get~(2), and considering the case of morphisms
\[
\xymatrix{
1 \ar[r] & 0 \ar[r] \ar[d]^{0} &
G \ar[r]^{\id} \ar@{..>}[d] & G \ar[r] \ar[d]^{\id} & 1 \\
1 \ar[r] & L \ar[r] & E \ar[r] & G \ar[r] & 1}
\]
we get~(3).
\hfill $\Box$
\end{proof}

We come back to the extension $E_c$. The lemma tells us that
the group $\Aut_{\ext}(E_c)$ of automorphisms of~$E_c$ as an
extension is isomorphic to the group of 1-cocycles
$\mathrm{Z}^1(G,\Lie(G,I))$. Item (2) of
Proposition~\ref{prop:sous-groupe Klambda(E_c)} says in particular
that $K_\lambda(E_c)$ is stable under these particular automorphisms.

Now we record a few technical properties concerning the
exponential and the cocycles. For simplicity we write
$\exp$ instead of $\exp_G$.

\begin{lemma} \label{lemma:properties of cocycle}
Let $G$ be an affine $k$-group scheme and
$c:G\times G\too \Lie(G,I)$ a normalized 2-cocycle.
Let $\exp:h^*\Lie(G,I)\to h^* G$ be the exponential morphism
as defined in Subsection~\ref{subsection:exp and inf translation}.
Let $R$ be a $k[I]$-algebra and $g,g',g''\in G(R)$. Assume
that $g$ is an exponential i.e. an element in the image of
$\exp$. Then, we have:
\begin{trivlist}
\itemn{1} $\exp(c(g,g'))=\exp(c(g',g))=1$,
\itemn{2} 
$\exp(c(gg',g''))=\exp(c(g'g,g''))
=\exp(c(g',gg''))=\exp(c(g',g''g))
=\exp(c(g',g''))$.
\end{trivlist}
The same statements hold with $c$ replaced by $\lambda c$,
for each $\lambda \in k$.
\end{lemma}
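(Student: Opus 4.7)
The plan is to reduce everything to two core observations about the exponential, combined with the normalization of $c$. The first is that the image of $\exp: h^*\Lie(G,I) \to h^*G$ consists precisely of the $k[I]$-points of $G$ that reduce to the identity modulo~$I$; this follows from the discussion in the proof of Proposition~\ref{ker of beta}(4). In particular, writing $\bar g \in G(R/IR)$ for the reduction, the hypothesis $g = \exp(x)$ gives $\bar g = 1$. The second observation, which is the key technical input, is that $\exp$ factors through reduction modulo~$I$ on the source: any $y \in h^*\Lie(G,I)(R)$ whose image in $\Lie(G,I)(R/IR)$ vanishes satisfies $\exp(y) = 1$. This follows from the explicit shape of $\exp$ (as recalled in Appendix~\ref{section:differential calculus}) via $\exp(y) = e_R + y$, together with the fact that such a $y$ takes values in $IR \cdot I \subseteq I^2 R = 0$.

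With these in place, I combine them with the naturality of $c$: since $c$ is a morphism of $k$-schemes $G \times G \to \Lie(G,I)$, the reduction of $c(g_1,g_2)$ modulo~$I$ equals $c(\bar g_1, \bar g_2)$. Part~(1) is then immediate. Indeed, normalization of $c$ yields $c(1,\bar g')=0$ and $c(\bar g',1)=0$, so both $c(g,g')$ and $c(g',g)$ vanish modulo~$I$, and the second observation gives $\exp(c(g,g')) = \exp(c(g',g)) = 1$.

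For part~(2), since $\bar g = 1$ in $G(R/IR)$, we have $\overline{gg'} = \overline{g'g} = \bar g'$ and $\overline{gg''} = \overline{g''g} = \bar g''$; hence all five values $c(gg',g'')$, $c(g'g,g'')$, $c(g',gg'')$, $c(g',g''g)$, $c(g',g'')$ reduce to the common element $c(\bar g', \bar g'')$ modulo~$I$, and all their pairwise differences vanish mod~$I$. Now $I^2 = 0$ forces the Lie bracket on $\Lie(G,I)$ to vanish, so $\Lie(G,I)$ is an abelian group scheme; $\exp$ is therefore a group homomorphism onto an abelian subgroup of $h^*G$, and applying the second observation to the differences yields the five required equalities. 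Finally, the statement with $\lambda c$ in place of $c$ is automatic, because $\lambda c$ is again a normalized $k$-valued $2$-cocycle valued in $\Lie(G,I)$, and every step applies verbatim.

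I expect the main obstacle to be the clean verification of the second observation. Once one unwinds the identification $h^*\Lie(G,I)(R) \simeq \Lie G(R) \otimes_k I$ and the description of the exponential from Appendix~\ref{section:differential calculus}, the computation collapses to a single application of $I \cdot IR = 0$, but stating it rigorously requires tracking the various base changes and the precise form of $\exp$ in coordinates. Everything else is a mechanical consequence of naturality and the cocycle normalization.
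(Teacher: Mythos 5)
Your proof is correct in substance, and it takes a genuinely different route from the paper's. The paper proves (1) by applying Lemma~\ref{lemma:cocycle vanishes with eps} to the pointed morphisms $c(-,g')$ and $c(g',-)$, and proves (2) by a chain of manipulations with the cocycle identity, the adjoint action of exponentials (Proposition~\ref{prop:elementary properties of exp}(4)), the kernel property (Proposition~\ref{prop:elementary properties of exp}(6)), and conjugation tricks such as replacing $g'g$ by $\gamma g'$ with $\gamma=g'g(g')^{-1}$, after first reducing to $x=y\otimes i$. You instead use three facts: an exponential $g=\exp(x)$ reduces to $1$ in $G(R/IR)$; the morphism $c$ commutes with the reduction $R\to R/IR$ by naturality; and any point of $\Lie(G,I)(R)$ whose reduction modulo $I$ vanishes is killed by $\exp$, because its image under $I\otimes_k R\to R$ lands in $I^2R=0$. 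Combined with the additivity of $\exp$, this gives (1) from the normalization of $c$ and (2) from naturality alone. Notably, you never invoke the cocycle identity, so your argument actually proves a slightly stronger statement -- the conclusions hold for any morphism of schemes $c:G\times G\to\Lie(G,I)$ vanishing on $\{1\}\times G$ and $G\times\{1\}$ -- and it renders the final claim about $\lambda c$ immediate. The paper's route stays entirely inside the formal calculus of exponentials and avoids any explicit reduction-mod-$I$ computation, which is why it needs the cocycle identity to move $g$ around.

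Two points should be restated, though neither affects the outcome. First, the image of $\exp$ need not be \emph{precisely} the set of points reducing to the identity modulo $I$: for a general affine (non-smooth) $G$, an $e_R$-derivation valued in $IR$ need not lift to a derivation valued in $I\otimes_k R$. You only use, and only need, the easy containment. Second, your justification of the additivity of $\exp$ is off: the bracket of the $k$-Lie algebra scheme $\Lie(G,I)$ does not vanish, and the additive group $\Lie(G,I)$ is abelian in any case. The correct statement you need is that $\exp_{G,I}=\beta_{h^*G}\circ h^*\gamma_G$ is a composite of morphisms of group schemes, hence a homomorphism from the additive group $h^*\Lie(G,I)$ to $h^*G$; equivalently, in the explicit description $\exp(x)=e_R+x_R$ the cross term $(x_R\otimes y_R)\circ\Delta$ lies in $(IR)^2=0$, so $\exp(x+y)=\exp(x)\exp(y)$. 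With that substitution your argument is complete.
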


\begin{proof}
(1) Apply Lemma~\ref{lemma:cocycle vanishes with eps} to the
morphism of pointed schemes
$\phi=c(-,g'):G\otimes_k R\to (\Lie(G,I))\otimes_k R$ to obtain $\exp(c(g,g'))=1$. Similarly $\exp(c(g',g))=1$.

\smallskip

\noindent (2) Write $g=\exp(x)$. Since
$\Lie(G,I)=\Lie(G)\otimes\VV(I^\smallvee)$ we can write $x$
as a sum of tensors $y\otimes i$. Working inductively on the
number of tensors in the sum, we can assume that $x=y\otimes i$.
We prove successively that
each of the first four terms equals $\exp(c(g',g''))$.
\begin{enumerate}
\item[a.] The \ cocycle \ identity \
\ $c(g,g')\,+\,c(gg',g'')\,=\,\Ad(g)\,c(g',g'')\,+\,c(g,g'g'')$ \ together \ with \ (1) \
imply \ $\exp(c(gg',g''))=\exp(\Ad(g)c(g',g''))$.
Since $g$ is an exponential, according
to Proposition~\ref{prop:elementary properties of exp}(4)
its adjoint action
is given by $\Ad(g)c(g',g'')=c(g',g'')+i[x,c(g',g'')]$.
Since multiples of $i$ lie in the kernel of $\exp$,
see Proposition~\ref{prop:elementary properties of exp}(6),
we deduce $\exp(\Ad(g)c(g',g''))=\exp(c(g',g''))$.
\item[b.] Since $\gamma:=g'g(g')^{-1}$ is an exponential,
$\exp(c(g'g,g''))=\exp(c(\gamma g',g''))=\exp(c(g',g''))$
by a.
\item[c.] The cocycle identity with $g$ and $g'$ exchanged reads
$c(g',g)+c(g'g,g'')=\Ad(g')c(g,g'')+c(g',gg'')$. We deduce
$\exp(c(g'g,g''))=\exp(c(g',gg''))$. We conclude with b.
\item[d.] Again this follows from the fact that
$g''g(g'')^{-1}$ is an exponential.
\end{enumerate}
The final claim of the lemma holds because $\lambda c$ is again
a normalized cocycle.
\hfill $\Box$
\end{proof}

\begin{proof-of}{Proposition~\ref{prop:sous-groupe Klambda(E_c)}}
Let us write $K=K_\lambda(E_c)$ for simplicity.
Obviously $K$ is a closed subfunctor of $h^*E_c$
which is isomorphic to $h^*\Lie(G,I)$ as a $k[I]$-scheme.
For the verification of points (1) and (2) we let $R$ be
an arbitrary $k[I]$-algebra.

\medskip

\noindent (1) First, let us prove that $K$
is a subgroup scheme. Let $(x,g)$ and $(x',g')$ be two
$R$-valued points of $K$ so we have
$g=\exp(\lambda x)$ and $g'=\exp(\lambda x')$. On one hand,
using Proposition~\ref{prop:elementary properties of exp}(4)
we see that $\Ad(g)x'=x'+\eps [x,x']$ and by
Proposition~\ref{prop:elementary properties of exp}(6)
we deduce $\exp(\lambda \Ad(g)x')=\exp(\lambda x')$.
On the other hand,
by Lemma~\ref{lemma:properties of cocycle}(1) we have
$\exp(\lambda c(g,g'))=1$. Putting all this together we get:
\begin{align*}
\exp\big(\lambda x+\lambda \Ad(g)x'+\lambda c(g,g')\big)
& =\exp(\lambda x)\, \exp(\lambda \Ad(g)x')\,\exp(\lambda c(g,g')) \\
& =\exp(\lambda x)\,\exp(\lambda x') \\
& =gg'.
\end{align*}
This proves that the product $(x,g)\cdot (x',g')$ is a point
of $K$. Using the same arguments we prove that the inverse
$(x,g)^{-1}=(-\Ad(g^{-1})x-c(g^{-1},g),g^{-1})$ is a point
of $K$. Hence $K$ is a subgroup scheme.

\smallskip

Second, let us prove that $K$ is stable by inner
automorphisms. Let $(x,g)$ and $(x',g')$ be $R$-valued points
of $h^*E_c$ and $K$ respectively. We must prove that
$(x'',g''):=(x,g)\cdot(x',g')\cdot(x,g)^{-1}$ lies in $K$.
Writing $x'$ as a sum of tensors $x_s=y_s\otimes i_s$ and
setting $g_s=\exp(\lambda x_s)$, we have $(x_s,g_s)\in K(R)$
and since~$K$ is a subgroup scheme,
it is enough to prove that $(x,g)\cdot(x_s,g_s)\cdot(x,g)^{-1}$
lies in $K$. In other words, we may and do assume in the sequel
that $x'=y\otimes i$.
We first consider $(x_1,g_1):=(x',g')\cdot(x,g)^{-1}$.
Using the fact that $g'=\exp(\lambda x')$ and
Proposition~\ref{prop:elementary properties of exp}(4), we find
\[
\Ad(g')\big(-\Ad(g^{-1})x-c(g^{-1},g)\big)
=-\Ad(g^{-1})x-c(g^{-1},g)+b
\]
where $b\in I\!\cdot\!\Lie(G,I)(R)$ is a certain bracket, and hence:
\begin{align*}
(x_1,g_1) & =(x',g')\cdot(-\Ad(g^{-1})x-c(g^{-1},g),g^{-1}) \\
& = \big(x'-\Ad(g^{-1})x-c(g^{-1},g)+b+c(g',g^{-1}),g'g^{-1}\big).
\end{align*}
Now
$(x'',g'')=(x,g)\cdot (x_1,g_1)=(x+\Ad(g)x_1+c(g,g_1),gg_1)$
and our task is to check that
\[
\exp\big(\lambda x+\lambda \Ad(g)x_1+\lambda c(g,g_1)\big)=gg_1.
\]
We note the following:
\begin{enumerate}
\item[a.] We have: \
$\lambda x+\lambda \Ad(g)x_1=\lambda \Ad(g)x'
-\lambda \Ad(g)c(g^{-1},g)+b+\lambda \Ad(g)c(g',g^{-1})$. \
By Proposition \ref{prop:elementary properties of exp}(6),
the term $b$ will disappear upon taking exponentials,
so we may disregard it. Similarly,
by Lemma~\ref{lemma:properties of cocycle}(1) the exponential
of $\lambda \Ad(g)c(g',g^{-1})$ equals 1. Also, using the
cocycle relation we see
that $\Ad(g)c(g^{-1},g)=c(g,g^{-1})$. Hence:
\[
\exp(\lambda x+\lambda \Ad(g)x_1)
=\exp\big(\lambda \Ad(g)x'-\lambda c(g,g^{-1})\big).
\]
\item[b.] By Lemma~\ref{lemma:properties of cocycle}(2) we have
$\exp(\lambda c(g,g_1))=\exp(\lambda c(g,g'g^{-1}))
=\exp(\lambda c(g,g^{-1}))$.
\item[c.] Using Proposition~\ref{prop:elementary properties of exp}(3)
we have
$gg_1=gg'g^{-1}=g\exp(\lambda x')g^{-1}=\exp(\lambda \Ad(g)x')$.
\end{enumerate}
Putting a-b-c together we get
$\exp\big(\lambda x+\lambda \Ad(g)x_1+\lambda c(g,g_1)\big)=
\exp(\lambda \Ad(g)x')=gg_1$
as desired.

\smallskip

\noindent (2) Let us write $K=K_\lambda(E_c)$ and
$K'=K_\lambda(E_{c'})$ for simplicity. According to
Lemma~\ref{lemma:morphisms of extensions},
any morphism of extensions $f:E_c\to E_{c'}$ is of the form
$f(x,g)=(\Lie(\alpha)(x)+\varphi(g),\alpha(g))$ for a unique
$\varphi:G\to \Lie G'$ satisfying
$\varphi(gg')-\varphi(g)-\Ad(g)\varphi(g')
=c'(\alpha g,\alpha g')-\Lie(\alpha)(c(g,g'))$. Setting $g=g'=1$
we see that such a $\varphi$, hence also $\lambda \varphi$,
is a map of pointed schemes. This being said, if $(x,g)$
is an $R$-valued point of $K$, the following computation
shows that $f(x,g)$ is a point of $K'$:
\begin{align*}
\exp_{G'}(\lambda \Lie(\alpha)(x)+\lambda \varphi(g))
& =\exp_{G'}(\lambda \Lie(\alpha)(x))
\exp_{G'}(\lambda \varphi(g)) \\
& =\exp_{G'}(\lambda \Lie(\alpha)(x))
\ \mbox{by Lemma~\ref{lemma:cocycle vanishes with eps},} \\
& =\exp_{G'}(\Lie(\alpha)(\lambda x)) \\
& = \alpha(\exp_{G}(\lambda x))
\ \mbox{by functoriality of $\exp$,} \\
& =\alpha(g) \ \mbox{because $g=\exp_G(\lambda x)$}.
\end{align*}
When $f$ is an isomorphism, applying the statement to
$f^{-1}$, we find $(h^*f)(K)=K'$.
\hfill $\Box$
\end{proof-of}

\subsection{Weil extension functor}

Now let $G$ be an affine and differentially flat $k$-group
scheme. Thus $G$ as well as the adjoint representation
$\Lie(G,I)$ are $k$-flat. We consider an arbitrary extension:
\[
1\too \Lie(G,I)\stackrel{i}{\too} E \stackrel{\pi}{\too} G \too 1.
\]
Then $E\to G$ is an fpqc torsor under $\Lie(G,I)$.
It has a cohomology class in $\mathrm{H}^1(G,\Lie(G,I))$ which
vanishes, being quasi-coherent cohomology of an affine scheme.
It follows that $\pi$ has a section $s:G\to E$, and the
extension becomes an $H$-extension. We may and do replace
$s$ by $s(1)^{-1}\cdot s$ in order to ensure that $s(1)=1$.
From $s$ we build a normalized cocycle $c:G\times G\too \Lie(G,I)$
as follows:
\[
i(c(g,g')):=s(g)s(g')s(gg')^{-1}.
\]
These data give rise to the group $E_c$ as defined
in Subsection~\ref{cocycles}.

\begin{prop} \label{prop:subgroup K}
Let $1\to \Lie(G,I) \to E \to G \to 1$ be an object of the
category $\Ext(I)/k$. Let $s:G\to E$,
with $s(1)=1$, be as chosen above, and $c$ the normalized
cocycle derived from it. Let $\lambda \in k$.
\begin{trivlist}
\itemn{1} The map $\tau_s:E_c\to E$, $(x,g)\mapsto i(x)s(g)$
is an isomorphism of extensions.
\itemn{2} The closed normal subgroup scheme
$K_\lambda(E):=(h^*\tau_s)(K_\lambda(E_c)) \subset h^*E$
does not depend on the choice of~$s$.
\itemn{3} For all morphisms $f:E\to E'$ in $\Ext(I)/k$
we have $(h^*f)(K_\lambda(E))\subset K_\lambda(E')$, with equality
if $f$ is an isomorphism.
\end{trivlist}
\end{prop}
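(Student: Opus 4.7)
The plan is to work systematically with the cocycle model $E_c$ furnished by the chosen section and then reduce everything to Proposition~\ref{prop:sous-groupe Klambda(E_c)}.

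For~(1), the map $\tau_s\colon E_c\to E$ is manifestly a morphism of schemes over $G$ that restricts to the identity on $\Lie(G,I)$ (set $g=1$ and use $s(1)=1$). To verify it is a group homomorphism I expand
\[
\tau_s\big((x,g)\cdot(x',g')\big)=i\big(x+\Ad(g)x'+c(g,g')\big)\,s(gg');
\]
using the defining relation $i(c(g,g'))\,s(gg')=s(g)s(g')$, this reduces to checking the identity $i(\Ad(g)x')\,s(g)=s(g)\,i(x')$, which is precisely the fact that the conjugation action of $G$ on $\Lie(G,I)$ inside $E$ is the adjoint action (this is built into the definition of $\Ext(I)/k$). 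On the other hand $\tau_s$ has an obvious scheme-theoretic inverse $e\mapsto \big(e\cdot s(\pi(e))^{-1},\pi(e)\big)$, so it is an isomorphism of schemes, hence of group schemes. Being a morphism of extensions is then immediate.

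For~(2), let $s'$ be a second section with $s'(1)=1$. Since $s'(g)s(g)^{-1}$ lies in $i(\Lie(G,I))$, there is a unique pointed morphism $\varphi\colon G\to \Lie(G,I)$ with $s'(g)=i(\varphi(g))\,s(g)$ and $\varphi(1)=0$. A direct computation expanding $s'(g)s'(g')$ in two ways yields the coboundary relation
\[
c'(g,g')=c(g,g')+\Ad(g)\varphi(g')+\varphi(g)-\varphi(gg'),
\]
and the same computation shows that the map $\alpha\colon E_c\to E_{c'}$, $(x,g)\mapsto(x-\varphi(g),g)$, is an isomorphism of extensions (in the sense of Lemma~\ref{lemma:morphisms of extensions}(2)) satisfying $\tau_{s'}\circ\alpha=\tau_s$. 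By Proposition~\ref{prop:sous-groupe Klambda(E_c)}(2) applied to $\alpha$, we get $(h^*\alpha)(K_\lambda(E_c))=K_\lambda(E_{c'})$, whence
\[
(h^*\tau_s)(K_\lambda(E_c))=(h^*\tau_{s'}\circ h^*\alpha)(K_\lambda(E_c))=(h^*\tau_{s'})(K_\lambda(E_{c'})),
\]
so $K_\lambda(E)$ depends only on $E$.

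For~(3), choose normalized sections $s,s'$ of $\pi\colon E\to G$ and $\pi'\colon E'\to G'$, with associated cocycles $c,c'$ and isomorphisms of extensions $\tau_s\colon E_c\isomto E$, $\tau_{s'}\colon E_{c'}\isomto E'$. Set $\tilde f:=\tau_{s'}^{-1}\circ f\circ \tau_s\colon E_c\to E_{c'}$; this is a morphism of extensions of the form considered in Proposition~\ref{prop:sous-groupe Klambda(E_c)}(2), so $(h^*\tilde f)(K_\lambda(E_c))\subset K_\lambda(E_{c'})$, with equality whenever $\tilde f$ is an isomorphism, i.e.\ whenever $f$ is. Transporting through $\tau_s$ and $\tau_{s'}$ yields the desired inclusion $(h^*f)(K_\lambda(E))\subset K_\lambda(E')$, with equality in the isomorphism case. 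The main obstacle is part~(2): one has to identify precisely how the cocycle $c$ changes under a change of section and exhibit the intertwining automorphism $\alpha$. Once this bookkeeping is in place, Proposition~\ref{prop:sous-groupe Klambda(E_c)} carries out the rest of the argument for both~(2) and~(3).
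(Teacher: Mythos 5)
Your proposal is correct and follows essentially the same route as the paper: transport everything to the cocycle model via $\tau_s$ and invoke Proposition~\ref{prop:sous-groupe Klambda(E_c)}(2); your explicit change-of-section computation in (2), producing $\alpha$ with $\tau_{s'}\circ\alpha=\tau_s$, is just the unwinding of the paper's application of the same argument to the identity morphism $f=\id:E\to E$ (where $\rho=\tau_{s'}^{-1}\circ\tau_s$ is your $\alpha$). The only detail worth adding is the remark that closedness and normality of $K_\lambda(E)$ follow from Proposition~\ref{prop:sous-groupe Klambda(E_c)}(1) because $\tau_s$ is an isomorphism of group schemes, which your part (1) already supplies.
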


If the extension $E$ is clear from context, we write $K_\lambda$
instead of $K_\lambda(E)$. Note that if $E$ is the trivial
extension and $s=\alpha$, the map $\tau_\alpha$ is the map
$\varrho_G$ defined in paragraph~\ref{TG and Lie G}.

\begin{proof}
(1) follows from the constructions of $c$ and $E_c$.

\smallskip

\noindent For the proof of (2) and (3) we will rely on
the following basic remark. Let $f:E\to E'$ be a morphism
in $\Ext(I)/k$. Let $\tau_s:E_c\to E$ and $\tau_{s'}:E_{c'}\to E'$
be the isomorphisms associated to choices of sections $s,s'$
preserving $1$ and corresponding normalized cocycles $c,c'$.
Let $K_{\lambda,s}(E):=(h^*\tau_s)(K_\lambda(E_c)) \subset h^*E$ and similarly
$K_{\lambda,s'}(E'):=(h^*\tau_{s'})(K_\lambda(E_{c'})) \subset h^*E'$.
We have a morphism of extensions:
\[
\rho=\tau_{s'}^{-1}\circ f\circ\tau_s:E_c \too E_{c'}.
\]
According to Proposition~\ref{prop:sous-groupe Klambda(E_c)}(2) we have
$(h^*\rho)(K_\lambda(E_c))\subset K_\lambda(E_{c'})$. It follows that:
\[
(h^*f)(K_{\lambda,s}(E))=(h^*f)((h^*\tau_s)(K_\lambda(E_c)))
=(h^*\tau_{s'})((h^*\rho)(K_\lambda(E_c)))
\subset (h^*\tau_{s'})(K_\lambda(E_{c'}))=K_{\lambda,s'}(E').
\]
When $f$ is an isomorphism, applying the statement to
$f^{-1}$ gives equality.

\smallskip

\noindent (2) Applying the basic remark to $E=E'$ and
$f=\id:E\to E$ proves that $K_{\lambda,s}(E)=K_{\lambda,s'}(E)$,
that is, the subgroup $K_{\lambda,s}(E)$ does not depend on the
choice of~$s$.
Since $\tau_s$ is an isomorphism of groups, the fact
that $K_\lambda(E)$ is a closed normal subgroup scheme
follows from Proposition~\ref{prop:sous-groupe Klambda(E_c)}(1).

\smallskip

\noindent (3) Applying the basic remark to a general $f$
gives the statement.
\hfill $\Box$
\end{proof}

For an extension $1\to \Lie(G) \to E \to G \to 1$,
we let $K_\lambda:=K_\lambda(E)$ be the normal subgroup defined
in the proposition. Point~(4) in Proposition~\ref{ker of beta}
gives motivation to consider $K_{-1}$.
The fpqc quotient sheaf $h^{\mbox{\tiny +}}E:=h^*E/K_{-1}$ is
representable by an affine flat
$k[I]$-scheme (see Perrin~\cite[Cor.~0.2]{Per76}).

\begin{definition}
We call {\em Weil extension} the quotient
$h^{\mbox{\tiny +}}E:=h^*E/K_{-1}$.
\end{definition}

\begin{lemma}
Weil extension is a functor $\Ext(I)/k\to \Gr\!/k[I]$.
\end{lemma}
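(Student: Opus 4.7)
The statement comprises two assertions: first, that $h^{\mbox{\tiny +}}E$ belongs to $\Gr\!/k[I]$ for every object $E$ of $\Ext(I)/k$; and second, that a morphism $f:E\to E'$ in $\Ext(I)/k$ induces a well-defined morphism of $k[I]$-group schemes $h^{\mbox{\tiny +}}E\to h^{\mbox{\tiny +}}E'$, compatibly with identities and compositions.

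For the first part, we already know from Proposition~\ref{prop:subgroup K}(1) that $K:=K_{-1}(E)$ is a closed normal subgroup scheme of $h^*E$, and from Perrin's representability result cited in the definition that the quotient $h^{\mbox{\tiny +}}E=h^*E/K$ is representable by an affine, flat $k[I]$-scheme and inherits a $k[I]$-group scheme structure. The work is then to verify that $h^{\mbox{\tiny +}}E$ is rigid and differentially flat. I would start by identifying the closed fibre of $K$: on a $k[I]$-algebra $R$ with $IR=0$, every $x\in\Lie(G,I)(R)$ satisfies $\exp(-x)=1$ (by the same kind of calculation as at the end of Proposition~\ref{ker of beta}(3), relying on the vanishing property of the exponential in Proposition~\ref{prop:elementary properties of exp}(6)), giving an isomorphism $i^*K\simeq\Lie(G,I)$. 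Since fpqc quotients commute with base change, this yields $(h^{\mbox{\tiny +}}E)_k\simeq E/\Lie(G,I)=G$.

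Armed with this, I would produce a rigidification from the section $s:G\to E$ with $s(1)=1$: the composition
\[
\sigma:\ h^*G\ \stackrel{h^*s}{\tooo}\ h^*E\ \stackrel{q}{\tooo}\ h^{\mbox{\tiny +}}E,
\]
where $q$ denotes the quotient map, reduces to $\pi\circ s=\id_G$ over the closed point $\Spec(k)\into\Spec(k[I])$. Since $h^*G$ and $h^{\mbox{\tiny +}}E$ are both $k[I]$-flat, the ``crit\`ere de platitude par fibres'' along the nilpotent thickening forces $\sigma$ to be an isomorphism of $k[I]$-schemes; hence $\sigma$ is a rigidification of $h^{\mbox{\tiny +}}E$. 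Differential flatness then comes for free: via $\sigma$ we have $\Omega^1_{h^{\mbox{\tiny +}}E/k[I]}\simeq h^*\Omega^1_{G/k}$, which is $k[I]$-flat since $\Omega^1_{G/k}$ is $k$-flat by the hypothesis $G\in\Gr\!/k$. Hence $h^{\mbox{\tiny +}}E\in\Gr\!/k[I]$.

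For functoriality, any morphism $f:E\to E'$ in $\Ext(I)/k$ satisfies $(h^*f)(K_{-1}(E))\subset K_{-1}(E')$ by Proposition~\ref{prop:subgroup K}(3), so $h^*f$ descends to a unique morphism of $k[I]$-group schemes $h^{\mbox{\tiny +}}f:h^{\mbox{\tiny +}}E\to h^{\mbox{\tiny +}}E'$; identities and compositions are preserved by the universal property of the quotient. The most delicate step in this plan is the computation of the closed fibre of $K$ together with the verification that $\sigma$ is an isomorphism: both hinge on reading $K_{-1}$ in the normal form $E_c$ as the graph of the morphism $x\mapsto(x,\exp(-x))$, and on the annihilation of $\Lie(G,I)$ by $\exp$ on the closed fibre.
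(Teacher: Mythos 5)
Your argument is correct and follows essentially the same route as the paper: representability and flatness of the quotient via Perrin, a rigidification obtained as $q\circ h^*s$ which is the identity on the special fibre (hence an isomorphism), and functoriality from Proposition~\ref{prop:subgroup K}(3). You merely make explicit some points the paper leaves implicit, namely the identification $i^*K_{-1}\simeq\Lie(G,I)$ and the verification of differential flatness via the rigidification.
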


\begin{proof}
The $k[I]$-group scheme $G:=h^*E/K_{-1}$ is affine and flat.
Let $s:G\to E$ be a section of $E\to G$
such that $s(1)=1$.
By pullback, this induces a morphism $h^*G\to h^*E\to G$
which is the identity on the special fibre, hence an
isomorphism, hence a rigidification. This proves that the
functor of the statement is well-defined on objets.
Proposition~\ref{prop:subgroup K} proves that the functor
is well-defined on morphisms.
\hfill $\Box$
\end{proof}

\section{The equivalence of categories}
\label{section:eq of cats}

This section is devoted to the proof of
Theorem~\ref{main_theorem_0}, which we recall below for
ease of reading. The plan is as follows.
In Subsection~\ref{subsection:equivariance} we
prove a preliminary result used in the proof of~(1).
In Subsection~\ref{section:proof} we prove (1), (2), (4).
Finally in Subsection~\ref{subsection:iso of module stacks}
we prove (3).

\begin{theorem} \label{main_theorem}
{\rm (1)} The Weil restriction/extension functors
provide quasi-inverse equivalences:
\[
\xymatrix@C=12mm{
\Gr\!/k[I]\ \ar@<.6ex>[r]^-{h_*}
& \ \Ext(I)/k \ar@<.6ex>[l]^-{h^{\mbox{\rm\tiny +}}}.}
\]
These equivalences commute with base
changes $\Spec(k')\to\Spec(k)$.

\smallskip

\noindent {\rm (2)} If $1\to\mathscr{G}'\to\mathscr{G}\to\mathscr{G}''\to 1$ is an
exact sequence in $\Gr\!/k[I]$, then
$1\to h_*\mathscr{G}'\to h_*\mathscr{G}\to h_*\mathscr{G}''$ is exact in $\Ext(I)/k$.
If moreover $\mathscr{G}'$ is smooth then
$1\to h_*\mathscr{G}'\to h_*\mathscr{G}\to h_*\mathscr{G}''\to 1$ is exact. In particular,
$h_*$ is an exact equivalence between the subcategories
of smooth objects endowed with their natural exact structure.

\smallskip

\noindent {\rm (3)} The equivalence $h_*$ is a morphism
of $\OO_k$-module stacks fibred over $\Gr\!/k$, i.e.
it transforms
the addition and scalar multiplication of deformations
of a fixed $G\in\Gr\!/k$ into the Baer sum and scalar
multiplication of extensions of $G$ by $\Lie(G,I)$.

\smallskip

\noindent {\rm (4)} Let $P$ be one of the properties:
of finite type, smooth, connected, unipotent, split unipotent,
solvable, commutative. Then $\mathscr{G}\in \Gr\!/k[I]$ has the
property $P$ if and only if the $k$-group scheme $E=h_*\mathscr{G}$
has $P$.
\end{theorem}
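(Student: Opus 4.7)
The plan is to establish the four parts in the order they are stated, with the identification of $\mathrm{L}(\mathscr{G})$ and $K_{-1}(h_*\mathscr{G})$ inside $h^*h_*\mathscr{G}$ as the linchpin of part (1), from which the rest will flow.

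For (1), I would construct the two natural isomorphisms separately. For $h^{\mbox{\rm\tiny +}}\circ h_* \isomto \id$, start from $\mathscr{G}\in \Gr\!/k[I]$ and exploit the short exact sequence $1\to \mathrm{L}(\mathscr{G}) \to h^*h_*\mathscr{G} \stackrel{\beta}{\to} \mathscr{G}\to 1$ from Lemma~\ref{lemma:beta for rigid groups}. The key identification to prove is that $\mathrm{L}(\mathscr{G}) = K_{-1}(h_*\mathscr{G})$ as closed normal subgroups of $h^*h_*\mathscr{G}$; granted this, $\beta$ induces the desired isomorphism $h^*h_*\mathscr{G}/K_{-1}(h_*\mathscr{G})\isomto \mathscr{G}$. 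To prove the identification, choose a rigidification $\sigma: h^*\mathscr{G}_k\isomto\mathscr{G}$ (which exists by assumption), apply Lemma~\ref{lemma:rigidifications vs sections} to extract a section $s$ of $i^*\beta_{\mathscr{G}}$ and hence a cocycle description $h_*\mathscr{G}\simeq E_c$ via Proposition~\ref{prop:subgroup K}(1). Then Proposition~\ref{prop:sous-groupe Klambda(E_c)} gives an explicit description of $K_{-1}(E_c)\subset h^*E_c=h^*h_*\mathscr{G}$ in terms of $(x,g)$ with $g=\exp(-x)$, while Proposition~\ref{ker of beta}(4) together with naturality of $\mathrm{L}$ (Proposition~\ref{ker of beta}(1)) under the rigidification gives the same description of $\mathrm{L}(\mathscr{G})$. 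For the other direction $h_*\circ h^{\mbox{\rm\tiny +}}\isomto \id$, start from $E\in\Ext(I)/k$, pick a pointed section $s$ and present $E\simeq E_c$; then $h^{\mbox{\rm\tiny +}}E=h^*E/K_{-1}$ is rigid (the scheme projection $h^*E_c\to h^*G$ becomes a rigidification in the quotient), and the explicit description of $h_*\VV(-)$ from Corollary~\ref{coro1}, combined with tracking of the multiplicative structure through $\theta_{\mathscr{G}}$, yields $h_*(h^{\mbox{\rm\tiny +}}E)\simeq E$ canonically. Compatibility with base change $\Spec(k')\to\Spec(k)$ reduces to the same compatibility for Weil restriction, for formation of $\Lie(-,I)$, and for the exponential—each of which is routine.

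For (2), left exactness of $h_*$ is its right adjointness to $h^*$. For the surjectivity of $h_*\mathscr{G}\to h_*\mathscr{G}''$ when $\mathscr{G}'$ is smooth, observe that $\mathscr{G}\to\mathscr{G}''$ is an fpqc torsor under the smooth group $\mathscr{G}'$, hence is itself smooth and admits sections étale-locally. Since Weil restriction of a smooth morphism along the finite free map $h$ is smooth, the induced map $h_*\mathscr{G}\to h_*\mathscr{G}''$ retains étale-local sections and is therefore fpqc surjective. Kernel identification follows by comparing with $h^*h_*$ and using the adjunction $\beta$ on all three terms.

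For (3), I would work at the level of the explicit descriptions given in~\ref{subsection:the O-module stack structures}. For addition: the pushout $\mathscr{G}_1\amalg_G\mathscr{G}_2$ is a group scheme over $k[I\oplus I]$, and Weil restriction along $\Spec(k[I\oplus I])\to\Spec(k)$ sends it to an extension of $G$ by $\Lie(G,I\oplus I)=\Lie(G,I)\oplus\Lie(G,I)$, which is naturally identified with the fibre product $h_*\mathscr{G}_1\times_G h_*\mathscr{G}_2$. Pulling back along $j$ corresponds, under $h_*$, to pushing out along the addition map $\Lie(G,I)\oplus\Lie(G,I)\to\Lie(G,I)$, which is exactly the recipe for the Baer sum. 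Scalar multiplication is handled identically, via the scaling map $s_\lambda$ and its effect on Weil restrictions. For (4), each property is tested on special fibres, and the sequence $1\to\Lie(\mathscr{G}_k,I)\to h_*\mathscr{G}\to\mathscr{G}_k\to 1$ reduces the verification to noting that $\Lie(\mathscr{G}_k,I)$ is a smooth, connected, commutative, split unipotent vector group and thus transmits every property on the list from $\mathscr{G}_k$ to $h_*\mathscr{G}$, and vice versa by projection.

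The main obstacle, which will absorb the bulk of the work, is the identification $\mathrm{L}(\mathscr{G})=K_{-1}(h_*\mathscr{G})$ in part (1): the former is defined via $I$-compatible derivation-type conditions on $k$-linear maps $A\to I\otimes_k R$ (Proposition~\ref{ker of beta}(2)), whereas the latter is defined via an exponential identity inside a cocycle-twisted product. Bridging the two requires a careful compatibility check between the multiplication $\diamond$ on $\OO_{\mathrm{c}}(\mathscr{G})$ introduced in Proposition~\ref{prop:group algebra O(h!)}, the multiplication on $E_c$ coming from the chosen cocycle, and the infinitesimal translation formula of Proposition~\ref{prop:elementary properties of exp}(3) that relates both descriptions back to the exponential in the trivial case.
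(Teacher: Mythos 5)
Your overall architecture matches the paper's, but there is a genuine gap at the very step you correctly single out as the linchpin of part (1), namely the identification $\mathrm{L}(\mathscr{G})=K_{-1}(h_*\mathscr{G})$ inside $h^*h_*\mathscr{G}$. You propose to get it from Proposition~\ref{ker of beta}(4) plus "naturality of $\mathrm{L}$ under the rigidification", but these two facts describe the two subgroups through \emph{different} identifications: $K_{-1}(h_*\mathscr{G})$ is by definition $h^*\tau_s\{(x,g):g=\exp(-x)\}$, where $\tau_s(x,g)=\gamma(x)\cdot s(g)$ uses the section $s$ attached to the rigidification $\sigma$, whereas naturality of $\mathrm{L}$ only tells you that $h^*h_*\sigma$ carries $\mathrm{L}(h^*\mathscr{G}_k)$ (described via $\tau_\alpha$, $\tau_\alpha(x,g)=\gamma(x)\cdot\alpha(g)$) onto $\mathrm{L}(\mathscr{G})$. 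To conclude that the two subgroups coincide you need $h_*\sigma\circ\tau_\alpha=\tau_s$, i.e. $h_*\sigma(\gamma(x)\cdot\alpha(g))=\gamma(x)\cdot s(g)$ — and since $\sigma$ is only an isomorphism of pointed schemes, \emph{not} of groups, this is not formal. It is exactly the content of the paper's Proposition~\ref{prop:sigma is Lie-equivariant} ($\Lie(\mathscr{G}_k,I)$-equivariance of $h_*\sigma$), whose proof is the real work of Section~\ref{section:eq of cats}: one embeds everything in the Weil-restricted group algebras (Corollary~\ref{coro1}), and proves the translation identity by a flatness argument with a second square-zero ideal $J$. Your closing remark that the bridge needs "a careful compatibility check between $\diamond$, the $E_c$-multiplication, and infinitesimal translation" acknowledges the obstacle but does not supply this equivariance, and nothing in your outline produces it; as written, "gives the same description of $\mathrm{L}(\mathscr{G})$" is an unjustified leap.

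Two smaller points. In (4), your claim that $\Lie(\mathscr{G}_k,I)$ "transmits every property on the list" from $\mathscr{G}_k$ to $h_*\mathscr{G}$ fails for commutativity: an extension of a commutative group by a vector group need not be commutative; here one must argue directly that $h_*$ preserves commutativity (which is immediate on points), as the paper does. On the other hand, your argument for the surjectivity in (2) — $\mathscr{G}\to\mathscr{G}''$ is smooth surjective since it is a torsor under the smooth group $\mathscr{G}'$, and étale covers of $R[I]$ lift uniquely from $R$, so sections lift through $h_*$ — is a legitimate and arguably cleaner alternative to the paper's diagram chase with the Lie-algebra and special-fibre sequences.
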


\subsection{Equivariance of rigidifications under
Lie algebra translation}
\label{subsection:equivariance}

Let $\mathscr{G}$ be an affine, differentially flat, rigid $k[I]$-group
scheme. Let $\sigma:h^*\mathscr{G}_k\isomto \mathscr{G}$ be a rigidification
such that $\sigma(1)=1$. We consider the morphism of $k$-schemes:
\[
h_*\sigma:h_*h^*\mathscr{G}_k \too h_*\mathscr{G}.
\]
This is not a morphism of group schemes, because source and
target are not isomorphic groups in general. However, it
satisfies an important equivariance property. To state it,
note that source and target are extensions of $\mathscr{G}_k$ by
$\Lie(\mathscr{G}_k,I)$; in particular both carry an action of
$\Lie(\mathscr{G}_k,I)$ by left translation.

\begin{prop} \label{prop:sigma is Lie-equivariant}
With notation as above, the morphism of $k$-schemes
\[
h_*\sigma:h_*h^*\mathscr{G}_k \too h_*\mathscr{G}
\]
is $\Lie(\mathscr{G}_k,I)$-equivariant.
\end{prop}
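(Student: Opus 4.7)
The plan is to verify $\Lie(\mathscr{G}_k, I)$-equivariance by a direct Hopf-algebra computation on $R$-points for an arbitrary $k$-algebra $R$, leaning throughout on the identity $I^2 = 0$. I will write $\mathscr{G} = \Spec A$ and $\mathscr{G}_k = \Spec A_k$ with $A_k = A/IA$, and encode the rigidification as a $k[I]$-algebra isomorphism $\sigma^{\#}: A \to A_k \otimes_k k[I]$ lifting the quotient $q: A \to A_k$. Decomposing $\sigma^{\#} = q + \sigma_1$ with $\sigma_1: A \to A_k \otimes_k I$, multiplicativity of $\sigma^{\#}$ combined with $I^2 = 0$ gives that $\sigma_1$ is a $q$-derivation, $\sigma_1(ab) = q(a)\sigma_1(b) + \sigma_1(a)q(b)$. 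This is the only structural fact about $\sigma$ needed in what follows.

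Next I will write an explicit formula for $h_*\sigma$. Identifying $h_*h^*\mathscr{G}_k(R)$ with the set of $k$-algebra maps $\tilde\phi: A_k \to R[I]$ and $h_*\mathscr{G}(R)$ with the set of $k[I]$-algebra maps $A \to R[I]$, a direct computation using $I^2 = 0$ gives
\[
h_*\sigma(\tilde\phi)(a) = \tilde\phi(q(a)) + \Psi_{\bar\phi}(a), \qquad \Psi_{\bar\phi} := (\bar\phi \otimes \id_I)\circ \sigma_1 : A \to I\otimes_k R,
\]
where $\bar\phi := \tilde\phi \bmod I : A_k \to R$ is the reduction. The crucial feature, again thanks to $I^2 = 0$, is that $\Psi_{\bar\phi}$ depends on $\tilde\phi$ only through $\bar\phi$; in particular it is constant on $\Lie(\mathscr{G}_k, I)$-orbits under left translation.

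For $x \in \Lie(\mathscr{G}_k, I)(R)$ with embeddings $\tilde x \in h_*h^*\mathscr{G}_k(R)$ and $\chi_x \in h_*\mathscr{G}(R)$ given by Proposition~\ref{ker of beta}(3), I then expand both $h_*\sigma(\tilde x \cdot \tilde\phi)$ and $\chi_x \cdot h_*\sigma(\tilde\phi)$ via the comultiplications $\Delta_{A_k}$ and $\Delta_A$ in Sweedler notation $\Delta_A(a) = \sum a_{(1)} \otimes a_{(2)}$. Systematically discarding every term landing in $I^2 \otimes R = 0$ and collapsing the Sweedler tails of $\tilde\phi$ via the counit identity for $A_k$ reduces the desired equality to the single identity
\[
\Psi_{\bar\phi}(a) \ =\ \sum \bar e_R(a_{(1)})\, \Psi_{\bar\phi}(a_{(2)}) \ +\ \sum d_R(a_{(1)})\, \bar\phi(q(a_{(2)})),
\]
where $e = \bar e + d$ is the canonical decomposition of the counit of $A$ along $k[I] = k \oplus I$.

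The closing observation is that $\bar\phi\circ q + \Psi_{\bar\phi} = (\bar\phi \otimes \id_{k[I]})\circ \sigma^{\#}$ is itself a $k[I]$-algebra homomorphism $A \to R[I]$, being the pullback of $\bar\phi$ along $\sigma^{\#}$. Applying the counit axiom $\psi(a) = \sum e_R(a_{(1)})\,\psi(a_{(2)})$ to this $\psi$, and then separating the $\bar e$ and $d$ contributions while killing everything in $I^2 = 0$, produces exactly the identity above. The principal obstacle is the bookkeeping: one must juggle three Hopf algebras ($A$, $A_k$, and $A_k\otimes_k k[I]$), the two different embeddings $\tilde x$ and $\chi_x$, and the repeated $I^2=0$ simplifications; but the only structural inputs are the $q$-derivation property of $\sigma_1$ and the counit axiom applied to the pullback $(\bar\phi\otimes \id_{k[I]})\circ\sigma^{\#}$.
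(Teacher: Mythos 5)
Your proposal is correct, and my check of the reduction confirms it: expanding both $h_*\sigma(\tilde x\cdot\tilde\phi)$ and $\chi_x\cdot h_*\sigma(\tilde\phi)$ and discarding $I^2$-terms does leave exactly the identity you state, and that identity does drop out of the counit axiom applied to the $k[I]$-algebra map $(\bar\phi\otimes\id_{k[I]})\circ\sigma^{\#}$. However, your route is genuinely different from the paper's. The paper never touches Sweedler notation: it embeds both Weil restrictions into group algebra schemes, identifies $h_*\OO_{k[I]}[h^*\mathscr{G}_k]$ with $\OO_k[\mathscr{G}_k][I]$, and reduces equivariance to the Claim that $\sigma'(y+x)=\sigma'(y)+x$ for $x$ in the ideal $I\cdot\OO_k[\mathscr{G}_k]$; that Claim is proved not by computation but by a rigidity trick, introducing a second square-zero module $J$, noting the two maps agree modulo $I$ and modulo $J$, invoking flatness of $h_*\OO_{k[I]}[\mathscr{G}][J]$ over $\OO_k[I][J]$ (via Corollary~\ref{coro1}, hence via differential flatness of $\mathscr{G}$) to force the difference into $IJ$, and then setting $J=I$ so that $IJ=I^2=0$. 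What the paper's argument buys is conceptual economy and consistency with the rest of the article (everything runs through the group algebra, and the computation at the end is three lines); what yours buys is elementariness and slightly greater generality, since it uses neither the group algebra machinery nor any flatness hypothesis, and in fact does not even invoke the normalization $\sigma(1)=1$ — the only inputs are the decomposition $\sigma^{\#}=q+\sigma_1$, the fact that the correction term $\Psi_{\bar\phi}$ depends only on the reduction $\bar\phi$, and $k[I]$-linearity of the pullback. Your role for $\Psi_{\bar\phi}$ is the pointwise analogue of the paper's Claim. One bookkeeping point to make explicit in a full write-up: the comultiplication of $\mathscr{A}$ lands in $\mathscr{A}\otimes_{k[I]}\mathscr{A}$, and the individual summands $\sum\bar e_R(a_{(1)})\Psi_{\bar\phi}(a_{(2)})$ and $\sum d_R(a_{(1)})\bar\phi(q(a_{(2)}))$ are not separately balanced over $k[I]$ (only their sum is), so one should either fix a $k$-linear lift of $\Delta_{\mathscr{A}}(a)$ once and for all or keep the balanced combinations grouped; this is exactly the kind of care your "bookkeeping" caveat refers to and does not affect the validity of the argument.
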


\begin{proof}
We write simply $\OO$ instead of $\OO_k$ or $\OO_{k[I]}$
when the base is clear from context.
Consider the extension of $\sigma$ to the group algebras:
\[
\sigma':=h_*\OO[\sigma]\ :h_*\OO[h^*\mathscr{G}_k]\to h_*\OO[\mathscr{G}].
\]
Note that by compatibility of $\OO[-]$ with base change and
Weil restriction (see Proposition~\ref{prop:group algebra},
(2)-(4)), we have
$h_*\OO[h^*\mathscr{G}_k] \isomto h_*h^*\OO[\mathscr{G}_k]
\isomto \OO[\mathscr{G}_k][I]$.
We obtain a commutative diagram:
\[
\xymatrix@C=12mm{
\underset{\ }{h_*h^*\mathscr{G}_k} \ar[r]^-{h_*\sigma} \ar@{^(->}[d]
& \underset{\ }{h_*\mathscr{G}} \ar@{^(->}[d] \\
\overset{\ }{\OO[\mathscr{G}_k][I]} \ar[r]^-{\sigma'}
& \overset{\ }{h_*\OO[\mathscr{G}]}.}
\]
We identify
$I\cdot\OO[\mathscr{G}_k]:=\OO[\mathscr{G}_k]\otimes_{\OO_k}\VV(I^{\smallvee})$
as the ideal generated
by $I$ inside both algebras $\OO[\mathscr{G}_k][I]$ and
$h_*\OO[\mathscr{G}]$, see Corollary~\ref{coro1}. With this
convention we formulate:

\bigskip

\noindent {\bf Claim.}
{\em We have $\sigma'(y+x)=\sigma'(y)+x$
for all points $y\in\OO[\mathscr{G}_k][I]$ and
$x\in I\cdot\OO[\mathscr{G}_k]$.}

\bigskip

\noindent To prove this, we introduce another copy $J=I$ of
our square-zero ideal as follows:
\[
\xymatrix@C=16mm{
h_*h^*h_*h^*\mathscr{G}_k \ar[r]^-{h_*h^*h_*\sigma} \ar[d]
& h_*h^*h_*\mathscr{G} \ar[d] \\
\OO[\mathscr{G}_k][I][J] \ar[r]^-{\sigma''}
& h_*\OO[\mathscr{G}][J]}
\]
where we have set $\sigma''=h_*h^*\sigma'$ for brevity.
Let $s,t:\OO[\mathscr{G}_k][I]\times J\cdot\OO[\mathscr{G}_k][I][J]
\to h_*\OO[\mathscr{G}][J]$ be defined by
\[
s(y,x)=\sigma''(y+x)
\quad\mbox{and}\quad
t(y,x)=\sigma''(y)+x.
\]
Then $s$ and $t$ are equal modulo $I$
because of the fact that $\sigma$ is the identity
modulo $I$. Clearly they are also equal modulo $J$.
Since $h_*\OO[\mathscr{G}][J]$ is flat as an $\OO_k[I][J]$-module,
we deduce that $s-t$ takes its values in the ideal
$IJ\cdot h_*\OO[\mathscr{G}][I]$. Setting $J=I$, that is composing with
the morphism $h_*\OO[\mathscr{G}][J]\to h_*\OO[\mathscr{G}]$ that takes a
section of $J$ to the corresponding section of $I$,
we obtain the claim.

\bigskip

\noindent We now conclude the proof of the proposition.
We write $\star_1$ and $\star_2$ the multiplications
of $h_*h^*\mathscr{G}_k$ and $h_*\mathscr{G}$ respectively, extended
to $h_*\OO[h^*\mathscr{G}_k]$ and $h_*\OO[\mathscr{G}]$. It is enough
to show more generally that
\[
h_*\sigma:h_*\OO[h^*\mathscr{G}_k] \too h_*\OO[\mathscr{G}]
\]
is $(1+I\cdot\OO[\mathscr{G}_k])$-equivariant. We take points
$x\in I\cdot\OO[\mathscr{G}_k]$ and $y\in h_*\OO[h^*\mathscr{G}_k]$
and compute:
\[
\setlength{\arraycolsep}{.6mm}
\begin{array}{rll}
(h_*\sigma)((1+x)\star_1 y)
& = (h_*\sigma)(y+x\star_1 y) \\
& = (h_*\sigma)(y)+x\star_1 y
\ \mbox{ by the Claim above,} \\
& = (h_*\sigma)(y)+x\star_2 y
\ \mbox{ by Corollary~\ref{coro1} since $\star_1=\star_2$
modulo $I$,} \\
& = (h_*\sigma)(y)+x\star_2 (h_*\sigma)(y)
\ \mbox{ by Corollary~\ref{coro1} since $\sigma=\id$
modulo $I$,} \\
& = (1+x)\star_2 (h_*\sigma)(y).
\end{array}
\]
This proves that $h_*\sigma$ is $\Lie(\mathscr{G}_k,I)$-equivariant.
\hfill $\Box$
\end{proof}

\subsection{Proof of the main theorem: equivalence and exactness}
\label{section:proof}

\begin{item-title}{Proof of~\ref{main_theorem}(1)}
We shall prove that the functors
\[
\xymatrix@C=12mm{
\Gr\!/k[I]\ \ar@<.6ex>[r]^-{h_*}
& \ \Ext(I)/k \ar@<.6ex>[l]^-{h^{\mbox{\rm\tiny +}}}}
\]
provide quasi-inverse equivalences that commute with base
changes $\Spec(k')\to\Spec(k)$.
Firstly, we prove that $h^{\mbox{\rm\tiny +}}\circ h_*$ is
isomorphic to the identity. Let $\mathscr{G}\to\Spec(k[I])$ be an
affine, differentially flat, rigid $k[I]$-group scheme.
Let $E=h_*\mathscr{G}$ be the associated extension:
\[
1\too \Lie(\mathscr{G}_k,I) \too E \too \mathscr{G}_k \too 1.
\]
We fix a rigidification $\sigma:h^*\mathscr{G}_k\isomto \mathscr{G}$ such
that $\sigma(1)=1$. We know from
Proposition~\ref{prop:sigma is Lie-equivariant} that the map
$h_*\sigma:h_*h^*\mathscr{G}_k \too h_*\mathscr{G}$ is $\Lie(\mathscr{G}_k,I)$-equivariant.
If we use the letter $\gamma$ to denote the inclusions of
$\Lie(\mathscr{G}_k,I)$ into the relevant extensions, this can be written:
\[
(h_*\sigma)(\gamma_{h_*h^*\mathscr{G}_k}(x)\cdot y)
=\gamma_{h_*\mathscr{G}}(x)\cdot (h_*\sigma)(y),
\quad\mbox{all } x\in\Lie(\mathscr{G}_k,I), \ y\in h_*h^*\mathscr{G}_k.
\]
Restricting to $y$ in the image of
$\alpha=\alpha_{\mathscr{G}_k}:\mathscr{G}_k\into h_*h^*\mathscr{G}_k$, so
$\tau_\alpha=\varrho_{\mathscr{G}_k}$, we obtain:
\[
h_*\sigma\circ \tau_\alpha=\tau_s.
\]
Using functoriality of $\beta$ and the fact that
$\sigma(1)=1$, we build a commutative diagram:
\[
\xymatrix@C=15mm{
h^*F=h^*(\Lie(\mathscr{G}_k,I)\times \mathscr{G}_k)
\ar[r]^-{h^*\tau_\alpha} \ar[d]_{\id}
& h^*h_*h^* \mathscr{G}_k \ar[r]^-{\beta_{h^*\mathscr{G}_k}}
\ar[d]_-{h^*h_*\sigma} & h^*\mathscr{G}_k \ar[d]_-{\sigma} \ar[r] & 1 \\
h^*E_c=h^*(\Lie(\mathscr{G}_k,I)\times \mathscr{G}_k) \ar[r]^-{h^*\tau_s}
& h^*h_*\mathscr{G} \ar[r]^-{\beta_\mathscr{G}} & \mathscr{G} \ar[r] & 1}
\]
Here the horizontal maps are morphisms of groups and the vertical
maps are {\em not} morphisms of groups (not even the
leftmost map $\id:h^*F\to h^*E_c$).
Note also that $F$ is $E_0=\T(\mathscr{G}_k,I)$, that is, the extension
$E_c$ with the zero cocycle $c=0$. Now we consider
$K_{-1}(E)$ as defined in Proposition~\ref{prop:subgroup K}.
According to Proposition~\ref{ker of beta}(4), the group
$K_{-1}(E_0)\subset E_0$ is the kernel of
$\beta_{h^*\mathscr{G}_k}\circ h^*\tau_\alpha$. On one hand the identity
takes $K_{-1}(E_0)$ to $K_{-1}(E_c)$, and on the other hand the map
$h_*\sigma$ takes $\ker(\beta_{h^*\mathscr{G}_k})$ onto $\ker(\beta_\mathscr{G})$
since it takes~1 to~1. By commutativity of the left-hand
square, we find $K_{-1}(E)=\ker(\beta_\mathscr{G})$ and thefore
$\beta_\mathscr{G}$ induces an isomorphism
$h^{\mbox{\rm\tiny +}}E=h^*E/K_{-1}(E)\simeq \mathscr{G}$ which is
visibly functorial.

\medskip

Secondly, we prove that $h_* \circ h^{\mbox{\rm\tiny +}}$
is isomorphic to the identity.
Let $1\to \Lie(G,I) \to E \to G \to 1$ be an extension.
We fix a section $s:G\to E$ such that $s(1)=1$
and we let $c$ be the normalized cocycle defined by $s$.
Let $K_{-1}=K_{-1}(E)\subset h^*E$ be the closed normal subgroup
defined in Proposition~\ref{prop:subgroup K}, and let
$\mathscr{G}:=h^{\mbox{\rm\tiny +}}E=h^*E/K_{-1}$ with quotient map
$\pi:h^*E\to \mathscr{G}$. Define $\sigma=\pi\circ h^*s:h^*G\to\mathscr{G}$.
Since $i^*K_{-1}=\Lie(G,I)$ as a subgroup of $E$, we see that
$i^*\mathscr{G}\simeq G$ and $i^*\sigma$ is the identity of $G$.
Since $G$ is $k$-flat, it follows that $\sigma$ is an
isomorphism. From the construction of $K_{-1}$, we see that
after we compose with the isomorphisms
\[
h^*\tau_s:h^*h_*h^*G\isomto h^*E \quad\mbox{and}\quad
\sigma:h^*G\isomto \mathscr{G},
\]
the flat surjection $\pi:h^*E\to \mathscr{G}$ is identified with the
counit of the adjunction:
\[
\beta_{h^*G}:h^*h_*h^*G\too h^*G.
\]
We apply $h_*$ and obtain the commutative diagram:
\[
\xymatrix@R=10mm@C=15mm{
h_*h^*G \ar[r]^-{\alpha_{h_*h^*G}}
\ar[d]^{\mbox{\rotatebox{90}{$\sim$}}} \ar[d]_{\tau_s}
& h_*h^*h_*h^*G \ar[r]^-{h_*\beta_{h^*G}} 
\ar[d]^{\mbox{\rotatebox{90}{$\sim$}}} \ar[d]_{h_*h^*\tau_s}
& h_*h^*G \ar[d]^{\mbox{\rotatebox{90}{$\sim$}}}
\ar[d]_{h_*\sigma} \\
E \ar[r]^-{\alpha_E} & h_*h^*E \ar[r]^{h_*\pi} & h_*\mathscr{G}}
\]
Since the top row is the identity, we see that the bottom
row is an isomorphism, i.e. $E\isomto h_*\mathscr{G}$. Again,
it is clear that this isomorphism is functorial.

Finally, we consider the commutation with base changes.
For Weil restriction, this is a standard fact. For Weil
extension, this follows from base change commutation for
pullbacks and for quotients by flat subgroups.
\end{item-title}

\begin{item-title}{Proof of~\ref{main_theorem}(2)}
Let $1\to \mathscr{G}'\to\mathscr{G}\to\mathscr{G}''\to 1$
be an exact sequence in $\Gr\!/k[I]$. Then the exact
sequences with solid arrows are exact:
\[
\renewcommand{\arraystretch}{1.5}
\begin{array}{c}
1\too \mathscr{G}'_k\too\mathscr{G}_k\too\mathscr{G}''_k\too 1 \\
\xymatrix@C=6mm{
1 \ar[r] & \Lie(\mathscr{G}'_k,I) \ar[r] & \Lie(\mathscr{G}_k,I) \ar[r] &
\Lie(\mathscr{G}''_k,I) \ar@{..>}[r] & 1}
\end{array}
\]
Moreover, if $\mathscr{G}'$ is smooth then the second sequence is
exact also if we include the dotted arrow. By an easy diagram
chase, we find that the commutative diagram below has exact
rows (again including dotted arrows if $\mathscr{G}'$ is smooth):
\[
\xymatrix@C=6mm@R=6mm{
& 1 \ar[d] & 1 \ar[d] & 1 \ar[d] & \\
1 \ar[r] & \Lie(\mathscr{G}'_k,I) \ar[r] \ar[d]
& \Lie(\mathscr{G}_k,I) \ar[r] \ar[d]
& \Lie(\mathscr{G}''_k,I) \ar@{..>}[r] \ar[d] & 1 \\
1 \ar[r]& h_*\mathscr{G}'_k \ar[r] \ar[d] & h_*\mathscr{G}_k \ar[r] \ar[d]
& h_*\mathscr{G}''_k \ar@{..>}[r] \ar[d] & 1 \\
1 \ar[r] & \mathscr{G}'_k \ar[r] \ar[d] & \mathscr{G}_k \ar[r] \ar[d]
& \mathscr{G}''_k \ar[r] \ar[d] & 1 \\
& 1 & 1 & 1 & }
\]
This proves the claim.
\end{item-title}

\begin{item-title}{Proof of~\ref{main_theorem}(4)}
If $\mathscr{G}$ is of finite type, or smooth, or connected, or unipotent,
or split unipotent, or solvable, then $G=i^*\mathscr{G}$ as well as
$\Lie(G,I)$ have the same property. It follows that $E=h_*\mathscr{G}$
has the property. Moreover, if $\mathscr{G}$ is commutative then $E$ also.

Conversely, if $E$ is of finite type, or smooth, or connected, or unipotent, or split unipotent, or solvable, or commutative, then
$h^*E$ has the same property. Therefore the quotient
$h^{\mbox{\tiny +}}E:=h^*E/K_{-1}$ has the same property.
\end{item-title}

\subsection{Proof of the main theorem:
isomorphism of $\OO_k$-module stacks}
\label{subsection:iso of module stacks}

In this paragraph, we prove~\ref{main_theorem}(3), i.e.
that the Weil restriction functor
$h_*:\Gr\!/k[I]\to\Ext(I)/k$ exchanges the addition and the
scalar multiplication on both sides. Before we start, we point
out that these properties will imply that the image of a trivial
deformation group scheme $\mathscr{G}=h^*G$ under Weil restriction is
the tangent bundle (i.e. trivial) extension $\T(G,I)$,
a fact which can be shown directly using
Proposition~\ref{ker of beta}(4).

We work in the fibre
category over a fixed $G\in\Gr\!/k$ and we set $L:=\Lie(G,I)$.
Let $\mathscr{G}_1,\mathscr{G}_2\in\Gr\!/k[I]$
with identifications $i^*\mathscr{G}_1\simeq G\simeq i^*\mathscr{G}_2$.
For clarity, we introduce three copies $I_1=I_2=I$ of the
same finite free $k$-module. For $c=1,2$ we have obvious maps:
\[
\xymatrix@C=5mm{
\Spec(k[I_c]) \ar[rr]^{j_c} \ar[rd]_{h_c}
& & \Spec(k[I_1\oplus I_2]) \ar[ld]^{\ell} \\
& \Spec(k) & }
\]
Set $\mathscr{G}'=\mathscr{G}_1\amalg_G\mathscr{G}_2 \in \Gr\!/k[I_1\oplus I_2]$,
so $\mathscr{G}_1+\mathscr{G}_2=j^*\mathscr{G}'$ where
$j:\Spec(k[I])\into \Spec(k[I\oplus I])$ is the closed
immersion induced by the addition morphism $I\oplus I\to I$.
We have a morphism
\[
\xi:\ell_*\mathscr{G}'\too h_{1,*}\mathscr{G}_1\times_Gh_{2,*}\mathscr{G}_2
\]
whose component $\xi_c:\ell_*\mathscr{G}'\to h_{c,*}\mathscr{G}_c$ is
the $\ell_*$-pushforward of the adjunction
$\mathscr{G}'\to j_{c,*}j_c^*\mathscr{G}'=j_{c,*}\mathscr{G}_c$. Besides, we have
a morphism
\[
\omega:\ell_*\mathscr{G}'\too h_*(\mathscr{G}_1+\mathscr{G}_2)
\]
obtained as the $\ell_*$-pushforward of the adjunction
$\mathscr{G}'\to j_*j^*\mathscr{G}'=j_*(\mathscr{G}_1+\mathscr{G}_2)$.

\begin{lemma} \label{lemma:compatibility with addition}
The morphism $\xi$ is an isomorphism and it induces an
isomorphism of extensions on the bottom row of the
following commutative square:
\[
\xymatrix@C=16mm{
\ell_*\mathscr{G}' \ar@{->>}[d]_-{\omega} \ar[r]^(.45){\overset{\xi}{\sim}} &
h_{1,*}\mathscr{G}_1\times_Gh_{2,*}\mathscr{G}_2 \ar@{->>}[d] \\
h_*(\mathscr{G}_1+\mathscr{G}_2) \ar[r]^(.45){\sim} & h_{1,*}\mathscr{G}_1+h_{2,*}\mathscr{G}_2}
\]
\end{lemma}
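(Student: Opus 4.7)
The plan is in three steps: (1) show that $\xi$ is an isomorphism by exploiting the pushout description of $\mathscr{G}'$; (2) identify both vertical arrows of the square as pushforwards of extensions along the addition map $+\colon L\oplus L\to L$, where $L=\Lie(G,I)$; (3) conclude that pushforward along $+$ of isomorphic extensions yields canonically isomorphic quotients, so $\xi$ descends to the desired iso on the bottom row and the square commutes.

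For step~(1), write $\mathscr{G}_c=\Spec(A_c)$ and $G=\Spec(A)$; then the pushout $\mathscr{G}'=\mathscr{G}_1\amalg_G\mathscr{G}_2$ is affine with ring $A'=A_1\times_A A_2$. The key fact is the canonical isomorphism of $k[I_1\oplus I_2]$-algebras
\[
R[I_1\oplus I_2]\isomto R[I_1]\times_R R[I_2],\qquad r+i_1+i_2\longmapsto(r+i_1,r+i_2),
\]
which combined with the universal property of fibred products yields, for every $k$-algebra $R$,
\[
(\ell_*\mathscr{G}')(R)=\Hom_{k[I_1\oplus I_2]\text{-alg}}(A',R[I_1]\times_R R[I_2])=\mathscr{G}_1(R[I_1])\times_{G(R)}\mathscr{G}_2(R[I_2])=(h_{1,*}\mathscr{G}_1\times_G h_{2,*}\mathscr{G}_2)(R).
\]
Since $\xi$ is the homomorphism of group schemes obtained from the canonical adjunctions $\mathscr{G}'\to j_{c,*}\mathscr{G}_c$, it realizes precisely this bijection, hence is an isomorphism of $k$-group schemes.

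For step~(2), by Proposition~\ref{prop:weil restriction functor} the extension $\ell_*\mathscr{G}'$ has quotient $\mathscr{G}'_k=G$ and kernel $\Lie(G,I_1\oplus I_2)=L\oplus L$, with $G$ acting diagonally via the adjoint representation. The iso $\xi$ respects this extension structure, identifying the left-hand side with the extension $E_1\times_G E_2$ of $G$ by $L\times L=L\oplus L$. By the very definition of the Baer sum, the right vertical arrow is the pushforward along $+\colon L\oplus L\to L$. The left vertical $\omega$ arises through $h_*=\ell_*\circ j_*$ from the canonical map $\mathscr{G}'\to j_*j^*\mathscr{G}'=j_*(\mathscr{G}_1+\mathscr{G}_2)$; by functoriality of the extension structure of Proposition~\ref{prop:weil restriction functor}, its restriction to the kernel $L\oplus L$ is the morphism induced on $\Lie(G,-)$ by the $k$-linear map of square-zero ideals $I_1\oplus I_2\to I$, $(i_1,i_2)\mapsto i_1+i_2$, which is precisely $+\colon L\oplus L\to L$. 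Hence $\omega$ is also the pushforward along $+$.

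Step~(3) is then formal: both vertical maps realize the pushout along $+$ of the same (via $\xi$) extension of $G$ by $L\oplus L$, so their targets are canonically isomorphic and the iso is the sought bottom arrow. The main obstacle is the identification of $\omega$ with the pushforward along $+$ in step~(2); once the functoriality of the extension structure attached to $h_*$ is granted, this reduces to the elementary computation that $\Lie(G,-)$ carries the addition $I_1\oplus I_2\to I$ to $+\colon L\oplus L\to L$, which can be verified directly using a rigidification of $\mathscr{G}'$ and the induced cocycles.
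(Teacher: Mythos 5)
Your proof is correct and, at its computational core, follows essentially the same route as the paper: the isomorphism $\xi$ is obtained from the decomposition $R[I_1\oplus I_2]\isomto R[I_1]\times_R R[I_2]$ together with the fibre-product description of the function ring of $\mathscr{G}'$ (the paper packages this via $I$-compatible maps and the group algebra, you do it directly on algebra homomorphisms), and the identification of the bottom row with the Baer sum comes down to the fact that $\omega$ is, on points, composition with the addition map $R[I_1\oplus I_2]\to R[I]$, whose effect on the Lie-algebra kernels is $+:L\times L\to L$. Where you differ is in the packaging of the second half: the paper proves directly that $\omega$ is surjective (using the $R$-algebra sections of $R[I_1\oplus I_2]\to R[I]$) and computes its kernel, identifying the quotient with the Baer sum; you instead exhibit $\omega$ as a morphism of extensions inducing $+$ on kernels and the identity on $G$, and conclude by the universal property of pushforward. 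That conclusion is indeed formal (a morphism of extensions of $G$ by $L$ inducing the identity on $L$ and $G$ is an isomorphism, cf.\ Lemma~\ref{lemma:morphisms of extensions}(2)), so no separate surjectivity argument is needed, but you should say this rather than leave it implicit, since declaring $\omega$ ``the pushforward along $+$'' is precisely the point at issue. Two smaller caveats: the appeal to the functoriality in Proposition~\ref{prop:weil restriction functor} is not literally applicable to $\omega$, which is not a morphism of group schemes over a fixed ring of dual numbers but the $\ell_*$-pushforward of the unit of the $(j^*,j_*)$-adjunction; the direct verification you fall back on (that $\omega$ commutes with the projections to $G$ and restricts on $\Lie(G,I_1\oplus I_2)$, described via derivations as in Proposition~\ref{ker of beta}(3), to the map induced by $I_1\oplus I_2\to I$) is exactly the kernel computation the paper carries out, so it should be written out rather than only asserted; and the claim that $\xi$ is compatible with the extension structures (kernels to kernels, projections to $G$) deserves the same one-line check from your explicit formula for $\xi$.
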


\begin{proof}
Write $G=\Spec(A)$ and $\mathscr{G}_c=\Spec(\mathscr{A}_c)$ so
$\mathscr{G}'=\Spec(\mathscr{A}_1\times_A\mathscr{A}_2)$. There is a morphism of algebra
schemes $\xi':\ell_*\OO[\mathscr{G}']\to
h_{1,*}\OO[\mathscr{G}_1]\times_{\OO_k[G]}h_{2,*}\OO[\mathscr{G}_2]$
contructed in the same way as $\xi$. It order to describe $\xi'$
we can express the Weil restrictions in terms of $I$-compatible maps
as in Lemma~\ref{lemma:description of Weil res isom}. For a
$k$-algebra~$R$, we have:
\[
\setlength{\arraycolsep}{.5mm}
\renewcommand{\arraystretch}{.5}
\begin{array}{rcl}
\Homc_k(\mathscr{A}_1\times_A\mathscr{A}_2,I_1R\oplus I_2R)
& \stackrel{\xi'(R)}{\tooo} &
\Homc_k(\mathscr{A}_1,I_1R)\underset{\Hom_k(A,R)}{\times}\Homc_k(\mathscr{A}_2,I_2R) \\
v & \longmapsto & (v_1,v_2)
\end{array}
\]
where $v_1$ (resp. $v_2$) is $v$ modulo $I_2$ (resp. $I_1$).
This is a bijection whose inverse sends a pair $(v_1,v_2)$ with
$v_1^*=v_2^*:A\to R$ to the map
$v:\mathscr{A}_1\times_A\mathscr{A}_2\to I_1R\oplus I_2R$,
$(a_1,a_2)\mapsto v_1(a_1)+v_2(a_2)$. The morphism $\xi$
is the bijection obtained by restriction of $\xi'$ to the subsets
of algebra maps as in
Lemma~\ref{lemma:description of Weil res isom}(2). Namely,
an algebra map is of the form $f={\bar v}+v$ where $v$ is $I$-compatible,
and $\omega(R)$ sends $f$ to $(f_1,f_2)$ while $\omega(R)^{-1}$
sends $(f_1={\bar v}+v_1,f_2={\bar v}+v_2)$ to $f={\bar v}+v_1+v_2$.

In order to describe $\omega$ note that
$(h_*(\mathscr{G}_1+\mathscr{G}_2))(R)=
\Hom_{k[I_1\oplus I_2]\mbox{-}\Alg}(\mathscr{A}_1\times_A\mathscr{A}_2,R[I])$
where $R[I]$ is a $k[I_1\oplus I_2]$-algebra via the
map $k[I_1\oplus I_2]\to k[I]$ induced by addition
$+:I\oplus I\to I$. Then $\omega(R)$ sends $f$ to the composition
\[
\mathscr{A}_1\times_A\mathscr{A}_2 \stackrel{f}{\too} R[I_1\oplus I_2]
\stackrel{+}{\too} R[I].
\]
Thus $\omega(R)$ is surjective because $R[I_1\oplus I_2]\too R[I]$
has $R$-algebra sections, i.e. $\omega$ is a surjection of functors.
Its kernel is the set of maps $f={\bar v}+v_1+v_2$ such that
$v_1+v_2=d_1+d_2:\mathscr{A}_1\times_A\mathscr{A}_2\to R[I]$, with
$e_c=d_c^*+d_c:\mathscr{A}_c\to k[I_c]$ the counits of the Hopf algebras.
After translation by the derivations as indicated by
Proposition~\ref{ker of beta}(3), on the
side of extensions the kernel is $\ker(+:L\times L\to L)$,
giving rise to a quotient isomorphic to the Baer sum
extension $h_{1,*}\mathscr{G}_1+h_{2,*}\mathscr{G}_2$.
\hfill $\Box$
\end{proof}

It remains to prove that $h_*:\Gr\!/k[I]\to\Ext(I)/k$ exchanges
the scalar multiplication on both sides. Let $\mathscr{G}\in\Gr\!/k[I]$
with an identification $i^*\mathscr{G}\simeq G$. We will reduce to a
situation similar as that of
Lemma~\ref{lemma:compatibility with addition} thanks to the
following trick.

\begin{lemma}
Let $j_\lambda:\Spec(k[I])\into \Spec(k[I_1\oplus I_2])$
be the closed immersion defined by the surjective $k$-algebra
map $I_1\oplus I_2\to I$, $i_1\oplus i_2\mapsto \lambda i_1+i_2$.
Then we have
$s_\lambda^*\mathscr{G}\isomto j_\lambda^*(\mathscr{G}\amalg_G h^*G)$
canonically.
\end{lemma}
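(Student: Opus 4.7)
I would factor the closed immersion $j_\lambda$ as $S_\lambda \circ j$, where $j\colon \Spec(k[I]) \intoo \Spec(k[I_1 \oplus I_2])$ is the closed immersion from the definition of the Baer sum and $S_\lambda\colon \Spec(k[I_1 \oplus I_2]) \to \Spec(k[I_1 \oplus I_2])$ is induced by the $k$-algebra endomorphism that scales $I_1$ by $\lambda$ and fixes $I_2$. On rings one checks $j^\# \circ S_\lambda^\#$ sends $\eps_\alpha^{(1)} \mapsto \lambda\eps_\alpha^{(1)} \mapsto \lambda\eps_\alpha$ and $\eps_\alpha^{(2)} \mapsto \eps_\alpha^{(2)} \mapsto \eps_\alpha$, matching $j_\lambda^\#$. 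Thus $j_\lambda^* = j^* \circ S_\lambda^*$. I would first establish $S_\lambda^*(\mathscr{G} \amalg_G h^*G) \cong (s_\lambda^* \mathscr{G}) \amalg_G h^*G$, since $S_\lambda$ affects only the first factor of the pushout. Applying $j^*$ then yields the Baer sum $(s_\lambda^* \mathscr{G}) + h^*G$ by the very definition of addition on the $\OO_k$-module stack $\Gr\!/k[I]$. Finally, since $h^*G$ is the neutral element for this addition, $(s_\lambda^* \mathscr{G}) + h^*G \cong s_\lambda^* \mathscr{G}$, yielding the claim.

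The delicate step is the first commutation, since tensor product does not commute with ring-theoretic pullbacks in general. I would handle it by a direct computation on coordinate rings: writing $\mathscr{G} = \Spec\mathscr{A}$ and $G = \Spec A$, and using the canonical $k$-module decomposition $\mathscr{A} \times_A A[I_2] = \mathscr{A} \oplus (I_2 \otimes_k A)$ (under which $(a_1, a_2)$ corresponds to $(a_1, a_2 - \bar{a}_1)$), one constructs an explicit $k[I_1 \oplus I_2]$-algebra morphism $\mathscr{A} \times_A A[I_2] \to (s_\lambda^* \mathscr{A}) \times_A A[I_2]$ whose base-changed version (along $S_\lambda^\#$) is surjective, well-defined by virtue of $I^2=0$, and becomes the identity of $A$ modulo $I$. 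Both sides are flat over $k[I]$: the target because $\mathscr{A}$ is $k[I_1]$-flat, and the source because pullback of a flat scheme along a closed immersion is flat, the pushout itself being $k[I_1 \oplus I_2]$-flat by Stacks Project \href{https://stacks.math.columbia.edu/tag/07RW}{Tag~07RW}. A surjection between flat $k[I]$-modules that induces an isomorphism modulo $I$ has flat kernel $K$ with $K = IK$, hence $K = I^2 K = 0$, so the map is an isomorphism.
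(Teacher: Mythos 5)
Your proposal is correct and follows essentially the same route as the paper: factor $j_\lambda=(s_\lambda\amalg\id)\circ j$ with the rescaling automorphism of $\Spec(k[I_1\oplus I_2])$, commute its pullback with the pushout $\mathscr{G}\amalg_G h^*G$, and conclude by applying $j^*$ and using that $h^*G$ is the neutral element for the sum in the fibre over $G$. The only difference is that the paper takes the identification $(s_\lambda\amalg\id)^*(\mathscr{G}\amalg_G h^*G)\simeq s_\lambda^*\mathscr{G}\amalg_G h^*G$ as immediate from the coproduct description, whereas you verify it by an explicit (and sound) ring-level computation together with a flatness argument.
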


\begin{proof}
If we think of $\Spec k[I_1\oplus I_2]$ as the coproduct
$\Spec k[I_1]\amalg_{\Spec k} \Spec k[I_2]$, the map
$j_\lambda$ is the composition $(s_\lambda\amalg \id)\circ j$
as follows:
\[
\xymatrix@C=12mm{
\Spec(k[I]) \ar[r]^-j & \Spec(k[I_1\oplus I_2])
\ar[r]^-{s_\lambda\amalg \id} &
\Spec(k[I_1\oplus I_2]).}
\]
It follows that:
\[
j_\lambda^*(\mathscr{G}\amalg_G h^*G)
=j^*((s_\lambda\amalg \id)^*(\mathscr{G}\amalg_G h^*G))
=j^*(s_\lambda^*\mathscr{G}\amalg h^*G)
=s_\lambda^*\mathscr{G}+h^*G
=s_\lambda^*\mathscr{G},
\]
because $h^*G$ is the neutral element for the sum in the
fibre category of $\Gr\!/k[I]\to\Gr\!/k$ at $G$.
\hfill $\Box$
\end{proof}

Set $\mathscr{G}_1=\mathscr{G}$ and $\mathscr{G}_2=h^*G$. Recall that $L=\Lie(G,I)$.
The Weil restrictions are $E:=E_1=h_*\mathscr{G}$ and the trivial
extension $E_2=h_*h^*G=L\rtimes G$. As in
Lemma~\ref{lemma:compatibility with addition}, there are
morphisms $\xi:\ell_*\mathscr{G}'\to E_1\times_G E_2=E\times L$.

\begin{lemma} \label{lemma:compatibility with scal mult}
The morphism $\xi$ is an isomorphism which induces an
isomorphism of extensions on the bottom row of the following
commutative square:
\[
\xymatrix@C=16mm{
\ell_*\mathscr{G}' \ar@{->>}[d]_-{\omega}
\ar[r]^(.45){\overset{\xi}{\sim}} & h_*\mathscr{G}\times L \ar@{->>}[d] \\
h_*(\lambda\mathscr{G}) \ar[r]^(.45){\sim} & \lambda h_*\mathscr{G}.}
\]
\end{lemma}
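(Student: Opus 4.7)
The plan is to mirror the argument of Lemma~\ref{lemma:compatibility with addition} and then insert one new step which identifies the resulting pushforward as the scalar multiple $\lambda h_*\mathscr{G}$.

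First I would set up notation: write $\mathscr{G}_1=\mathscr{G}=\Spec(\mathscr{A}_1)$ and $\mathscr{G}_2=h^*G=\Spec(A\otimes_k k[I_2])$, so $\mathscr{G}'=\Spec(\mathscr{A}_1\times_A A[I_2])$. Exactly as in the proof of Lemma~\ref{lemma:compatibility with addition}, I would identify $\xi$ with the bijection furnished by Lemma~\ref{lemma:description of Weil res isom}: every $k[I_1\oplus I_2]$-algebra map $f\colon\mathscr{A}_1\times_A A[I_2]\to R[I_1\oplus I_2]$ decomposes uniquely as $f=\bar v+v_1+v_2$ where $v_c$ is an $I_c$-compatible $\bar v$-derivation on $\mathscr{A}_c$, and $\xi$ sends $f$ to the pair $(v_1,v_2)$. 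This realises $\xi$ as the scheme isomorphism $\ell_*\mathscr{G}'\isomto h_*\mathscr{G}\times_G h_*h^*G=h_*\mathscr{G}\times L$, using $h_*h^*G\simeq L\rtimes G$.

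Next I would describe $\omega$ explicitly. The closed immersion $j_\lambda$ corresponds to the $k$-algebra surjection $k[I_1\oplus I_2]\to k[I]$ sending $i_1\oplus i_2\mapsto\lambda i_1+i_2$, so $\omega(R)$ sends $f=\bar v+v_1+v_2$ to the map $\bar v+\lambda v_1+v_2\colon\mathscr{A}_1\times_A A[I_2]\to R[I]$, which factors through the coordinate ring of $\lambda\mathscr{G}=j_\lambda^*\mathscr{G}'$. Since the structural surjection admits an algebra section $i\mapsto i_2$, the morphism $\omega$ is an fpqc surjection of flat $k$-group schemes. Translating each $v_c$ as $v_c=d_c+\delta_c$ with $d_c$ the derivation part of the counit $e_c$ and $\delta_c\in L$, in the spirit of Proposition~\ref{ker of beta}(3), I would then identify the kernel of $\omega$, viewed through $\xi$ inside $h_*\mathscr{G}\times_G h_*h^*G$, with $\ker(+_\lambda)$, where $+_\lambda\colon L\times L\to L$ is the $G$-equivariant homomorphism $(x_1,x_2)\mapsto\lambda x_1+x_2$. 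This presents $h_*(\lambda\mathscr{G})$ as the pushforward of the extension $h_*\mathscr{G}\times_G h_*h^*G$ of $G$ by $L\times L$ along $+_\lambda$.

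Finally, it remains to recognise this pushforward as $\lambda h_*\mathscr{G}$. Factoring $+_\lambda$ as the composition $L\times L\to L\times L$, $(x_1,x_2)\mapsto(\lambda x_1,x_2)$, followed by $+\colon L\times L\to L$, and using functoriality of pushforward together with the fact that $L\rtimes G=h_*h^*G$ is the neutral element of the Baer sum in $\cE xt(G,L)$, one obtains
\[
(+_\lambda)_*\bigl(h_*\mathscr{G}\times_G h_*h^*G\bigr)=+_*\bigl((\lambda h_*\mathscr{G})\times_G h_*h^*G\bigr)=(\lambda h_*\mathscr{G})+(L\rtimes G)=\lambda h_*\mathscr{G}.
\]
The commutativity of the square then follows from the constructions. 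The main subtle point I anticipate is verifying that the $G$-action on $L$ coming from the extension $h_*\mathscr{G}$ agrees with the adjoint action, so that $(\lambda,\id)_*$ really produces $(\lambda h_*\mathscr{G})\times_G h_*h^*G$ in the displayed chain above; this is exactly what Proposition~\ref{ker of beta}(3) delivers, as in the parallel step of Lemma~\ref{lemma:compatibility with addition}.
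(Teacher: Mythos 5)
Your proposal is correct and follows essentially the same route as the paper: repeat the proof of Lemma~\ref{lemma:compatibility with addition} with the map $i_1\oplus i_2\mapsto\lambda i_1+i_2$, identify the kernel of $\omega$ with $\ker\bigl(L\times L\to L,\ (x_1,x_2)\mapsto\lambda x_1+x_2\bigr)$, and recognise the quotient as the pushout $\lambda E$. The only difference is cosmetic: where the paper simply asserts that this quotient is the pushforward $\lambda h_*\mathscr{G}$, you make the identification explicit by factoring $+_\lambda$ through $\lambda\times\id$ and using that the trivial extension is neutral for the Baer sum, which is a fine (and slightly more detailed) way to justify the same step.
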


\begin{proof}
The proof is the same as that of
Lemma~\ref{lemma:compatibility with addition} except that
in the final step we use the map $I\oplus I\to I$,
$i_1\oplus i_2\mapsto \lambda i_1+i_2$. Again this morphism
is surjective and on the side of extensions, the kernel
corresponds to the kernel of $L\times L\to L$,
$(v_1,v_2)\mapsto \lambda v_1+v_2$. The quotient of $E\times L$
by this kernel is exactly the extension $\lambda E$, the
pushout of the diagram:
\[
\xymatrix{
L \ar[r] \ar[d]_-{\lambda}
\ar@{}[rd]|{\mbox{\LARGE$\ulcorner$}}
& E \ar[d] \\
L \ar[r] & \lambda E .}
\]
This finishes the proof.
\hfill $\Box$
\end{proof}

\section{Dieudonn\'e theory for unipotent groups
over the dual numbers} \label{section:application}

In this section, as an application of Theorem~\ref{main_theorem_0},
we give a classification of smooth, unipotent group schemes
over the dual numbers of a perfect field $k$, in terms
of extensions of Dieudonn\'e modules. So throughout the
section, the ring $k$ is a perfect field of characteristic
$p>0$.

\subsection{Reminder on Dieudonn\'e theory}

We denote by $W$ the Witt ring scheme over $k$ and $\Fr,\V$
its Frobenius and Verschiebung endomorphisms.
For all $n\ge 1$, we write $W_n:=W/V^n W$ the ring scheme
of Witt vectors of length $n$. We use the same notation also
for these operators over the $R$-points, with $R$ a $k$-algebra.
We also define $\tilde{\V}:W_n\too W_{n+1}$ as the morphism
induced on $W_n$ by the composition 
\[
\xymatrix{
W \ar[r]^-{\V} & W \ar[r]^-{\pi_{n+1}} & W_{n+1}
}
\]
where $\pi_{n+1}$ is the natural projection.

The {\em Dieudonn\'e ring $\DD$} is the $W(k)$-algebra
generated by two variables $\FF$ and $\VV$ with the relations:
\[
\setlength{\arraycolsep}{.7mm}
\renewcommand{\arraystretch}{1.2}
\begin{array}{rl}
\FF x & =\Fr(x)\FF\\
x\VV  & =\VV \Fr(x) \\
\FF \VV & =\VV\FF =p,
\end{array}
\]
for varying $x\in W(k)$.
A {\em Dieudonn\'e module} is a left $\DD$-module.
A Dieudonn\'e module $M$ is called {\em erasable} if for any
$m\in M$ there exists a positive integer $n$ such that $\VV^nm=0$.

Let $R$ be a $k$-algebra. Then, for any $n\ge 1$, we make
$W_n(R)$ a left $\DD$-module with the rules:
\[
\setlength{\arraycolsep}{.7mm}
\renewcommand{\arraystretch}{1.2}
\begin{array}{rl}
\FF \cdot u & =\Fr(u) \\
\VV \cdot u & =\V(u) \\
x \cdot u & = \Fr^{1-n}(x)u
\end{array}
\]
for all $u\in W_n(R)$ and $x\in W(k)$.
The twist in the latter definition is designed to make
$\tilde{V}:W_n(R)\to W_{n+1}(R)$ a morphism of $\DD$-modules,
see Demazure and Gabriel ~\cite[chap.~V, \S~1, no~3.3]{DG70}.
All of this is functorial
in $R$ and gives $W_n$ a structure of $\DD$-module scheme.
In particular, $\End_k(W_n)$ is a $\DD$-module. According
to~\cite[chap.~V, \S~1, no~3.4]{DG70} the morphism
$\DD \to \End_k(W_n)$ induces an isomorphism of $\DD$-modules:
\[
\DD/\DD V^n \isomto \End_k(W_n).
\]
If $U$ is a commutative, unipotent $k$-group scheme, the set
$\Hom_k(U,W_n)$ is a Dieudonn\'e module with its structure
given by postcomposition, i.e. for any $f:U\to W_n$:
\[
\setlength{\arraycolsep}{.7mm}
\renewcommand{\arraystretch}{1.2}
\begin{array}{rl}
\FF\cdot f &=\Fr \circ f \\
\VV\cdot f &=\V\circ f \\
x\cdot f &= \Fr^{1-n}(x)f,
\end{array}
\]
all $x\in W(k)$.
We define the {\em Dieudonn\'e module of $U$} as:
\[
\underline M(U):=\lim_{\stackrel{\too}{n}} \Hom_k(U,W_n)
\]
where the transition maps of the inductive system are
induced by $\tilde{V}:W_n\to W_{n+1}$. Since $\Hom_k(U,W_n)$
is killed by $\VV^n$ and $\underline M(U)$ is a union of these
subgroups, we see that $\underline M(U)$ is erasable. If $M$
is a Dieudonn\'e
module, we define its {\em Frobenius twist $M^{(p)}$} as the
module with underlying group $M^{(p)}=M$ and $\DD$-module
structure given by:
\[
\FF_{M^{(p)}}=\FF_M, \quad \VV_{M^{(p)}}=\VV_M,
\quad x_{M^{(p)}}=\Fr^{-1}(x)_M \quad \mbox{for all } x\in W(k).
\]
Then the maps $\FF_M:M^{(p)}\to M$ and $\VV_M:M\to M^{(p)}$
are visibly $\DD$-linear. Moreover, let $\CU/k$ be the category
of commutative unipotent $k$-group schemes; according to
\cite[chap.~V, \S~1, 4.5]{DG70} we have a canonical isomorphism
$\underline M(U)^{(p)}\isomto \underline M(U^{(p)})$ for all
$U\in \CU$ with Frobenius twist $U^{(p)}$.

\begin{theorem} \label{theo: Dieudonne modules}
Let $\DD\mbox{-}\Mod$ be the category of Dieudonn\'e
modules. The contravariant functor 
$$
\underline M:\CU/k \too \DD\mbox{-}\Mod
$$
is exact, fully faithful with essential image the full
subcategory of erasable Dieudonn\'e modules. It
transforms the Frobenius (resp. Verschiebung) of $U$ into
the Frobenius (resp. Verschiebung) of $\underline M(U)$.
Moreover a unipotent group scheme $U$ is of finite type
if and only if $\underline M(U)$ is of finite type.
\end{theorem}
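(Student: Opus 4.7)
This statement is the classical Dieudonn\'e equivalence over a perfect field of characteristic~$p>0$, due essentially to Gabriel; see~\cite[chap.~V, \S~1, no.~4]{DG70}. The backbone of the argument is to show that the truncated Witt schemes $W_n$ form a system of injective cogenerators of $\CU/k$, compatible via $\tilde{\V}$ with the filtered colimit defining $\underline M$.

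The first step is to compute $\underline M(W_n)$. From the isomorphism $\DD/\DD \VV^n \isomto \End_k(W_n)$ recalled above, the identity endomorphism produces a canonical element of $\underline M(W_n)$; tracking its $\DD$-action and checking that the transition maps induced by $\tilde{\V}:W_n\to W_{n+1}$ are the natural surjections, one obtains $\underline M(W_n)\simeq \DD/\DD\VV^n$ as $\DD$-modules. The second step is to show that every $U\in\CU/k$ admits a $\tilde{\V}$-equivariant embedding into a product of Witt groups $W_{n_i}$. This is done by d\'evissage along the Verschiebung filtration: every section of $\cO_U$ is killed by some $\V^n$, and reducing modulo~$\V$ brings the problem down to subgroups of $\GG_a^{\oplus r} = W_1^{\oplus r}$, whose structure is controlled by additive polynomials. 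This cogenerator property immediately yields faithfulness of $\underline M$, and is the geometric input behind exactness on short exact sequences: one uses that $W_n$ is injective in the subcategory of commutative unipotent groups killed by $\V^n$.

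The third step produces a quasi-inverse and identifies the essential image. To an erasable Dieudonn\'e module $M$, pick a presentation $\bigoplus_j \DD/\DD\VV^{m_j}\to \bigoplus_i \DD/\DD\VV^{n_i}\to M\to 0$ (possible precisely because $M$ is a filtered colimit of finitely generated submodules, each a quotient of a sum of $\DD/\DD\VV^n$'s), and associate to $M$ the kernel $U_M$ of the morphism $\prod_i W_{n_i}\to \prod_j W_{m_j}$ obtained contravariantly from step one; a direct computation gives $\underline M(U_M)\simeq M$. Compatibility with $\FF$ and $\VV$ is tautological from the postcomposition definition of the $\DD$-action. For the finite type criterion, $U\in\CU/k$ is of finite type iff the presentation above can be chosen with finitely many generators and relations, equivalently iff $\underline M(U)$ is finitely generated over $\DD$.

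The main obstacle is the cogenerator step, which requires the structure theory of commutative unipotent group schemes to be fine enough to cover non-finite-type objects; this is precisely what dictates the ``erasable'' condition in the description of the essential image, namely that $U$ is a pro-object in the subcategory of commutative unipotent groups killed by powers of~$\V$. The remaining assertions are then essentially formal consequences of this cogenerator property combined with the computation of $\underline M(W_n)$.
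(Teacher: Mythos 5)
The paper does not reprove this statement: it is Demazure--Gabriel's classical Dieudonn\'e equivalence and is simply cited there (\cite[chap.~V, \S~1, 4.3, 4.4, 4.5]{DG70}), and your sketch follows exactly the argument of that reference --- computing $\underline M(W_n)\simeq \DD/\DD\VV^n$, using the Witt groups $W_n$ as injective cogenerators of the subcategories of groups killed by $\V^n$, and building a quasi-inverse from presentations to identify the essential image as the erasable modules and to read off the finite-type criterion. So your proposal is correct and takes essentially the same (i.e.\ the cited) route; no comparison beyond that is needed.
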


\begin{proof}
See~\cite[chap.~V, \S~1, 4.3, 4.4, 4.5]{DG70}.
\hfill $\Box$
\end{proof}

\subsection{Dieudonn\'e theory over the dual numbers}

Before stating our Dieudonn\'e classification, we need to
define the notions of {\em Lie algebra} and {\em smoothness}
of Dieudonn\'e modules. We let
${\DD\mbox{-}\Mod}^{e}\subset {\DD\mbox{-}\Mod}$ be the
subcategory of erasable $\DD$-modules.

\begin{definition}
Let $M\in \DD\mbox{-}\Mod$ be a Dieudonn\'e module.
We define the {\em Lie algebra of $M$} by:
\[
\Lie M:=(M/\FF M)\otimes_k k[\FF].
\]
If $I$ a finite dimensional $k$-vector space,
the {\em $I$-Lie algebra of $M$} is
$\Lie(M,I):=\Lie M\otimes_k I^\smallvee$.
This gives rise to endofunctors $\Lie(-)$ and $\Lie(-,I)$
of the category ${\DD\mbox{-}\Mod}^{e}$.
\end{definition}

\begin{prop} \label{prop:M commutes with Lie}
We have an isomorphism, functorial in $U\in \CU/k$:
\[
\Lie(\underline M(U),I) \isomto \underline M(\Lie(U,I)).
\]
\end{prop}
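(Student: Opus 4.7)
The plan is to reduce to the case $I = k$ and then identify both sides of the proposed isomorphism through the Frobenius short exact sequence and the Dieudonn\'e theory of height-one groups.

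\emph{Reduction to $I = k$.} By definition $\Lie(\underline M(U), I) = \Lie\underline M(U) \otimes_k I^\smallvee$, while $\Lie(U, I) = \Lie U \otimes \VV(I^\smallvee)$ is, after choosing a basis of $I$, (non-canonically) isomorphic to $(\Lie U)^{\oplus r}$ with $r = \dim_k I$. Hence additivity of the Dieudonn\'e functor together with its functoriality in $I$ give $\underline M(\Lie(U, I)) \cong \underline M(\Lie U) \otimes_k I^\smallvee$. So the proposition reduces to constructing a natural isomorphism $\Lie\underline M(U) \isomto \underline M(\Lie U)$.

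\emph{Frobenius kernel and key identifications.} Since $U$ is smooth, $F_U : U \to U^{(p)}$ is faithfully flat, giving the short exact sequence $1 \to U^{[F]} \to U \to U^{(p)} \to 1$. Applying the exact contravariant functor $\underline M$ (which sends $F_U$ to $\FF_{\underline M(U)}$ by Theorem~\ref{theo: Dieudonne modules}) yields
$$0 \too \underline M(U)^{(p)} \stackrel{\FF}{\too} \underline M(U) \too \underline M(U^{[F]}) \too 0,$$
hence a canonical isomorphism $\underline M(U)/\FF\underline M(U) \isomto \underline M(U^{[F]})$. Since $\Lie F_U = 0$ in characteristic $p$, the inclusion $U^{[F]} \hookrightarrow U$ induces $\Lie U^{[F]} \isomto \Lie U$. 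The group $U^{[F]}$ is infinitesimal commutative unipotent of height one, and by the classical equivalence of such groups with restricted Lie algebras, its Dieudonn\'e module is canonically isomorphic to $(\Lie U^{[F]})^\smallvee = (\Lie U)^\smallvee$ as a $k$-vector space. Separately, for any vector group $V \cong \GG_a^d$, additivity together with the elementary computation $\underline M(\GG_a) = k[\FF]$ gives $\underline M(V) \cong (\Lie V)^\smallvee \otimes_k k[\FF]$; applied to $V = \Lie U$ (using $\Lie \Lie U = \Lie U$ for vector groups), this yields $\underline M(\Lie U) \cong (\Lie U)^\smallvee \otimes_k k[\FF]$.

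\emph{Conclusion and main obstacle.} Assembling the identifications:
$$\Lie\underline M(U) = (\underline M(U)/\FF \underline M(U)) \otimes_k k[\FF] \cong (\Lie U)^\smallvee \otimes_k k[\FF] \cong \underline M(\Lie U),$$
and every step is natural in $U$. The main obstacle is the verification that the identification $\underline M(U^{[F]}) \cong (\Lie U)^\smallvee$ respects the $\DD$-module structure (not merely the underlying $k$-vector space), up to any required Frobenius twist of the $W(k)$-action. This is feasible over the perfect field $k$ because twists are invertible, and it is alleviated by the fact that after tensoring with $k[\FF]$ (on which $\VV$ acts trivially) the Verschiebung structure is forgotten, so only the $\FF$-action (which vanishes on both sides for height-one groups) and the semilinear $W(k)$-action must be tracked. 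Once these compatibilities are checked, the resulting isomorphism is functorial in $U$ and the proposition follows.
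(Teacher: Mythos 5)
Your route is essentially the paper's: reduce to rank one, use the Frobenius-kernel sequence to identify $\underline M(U)/\FF\underline M(U)$ with $\underline M(U^{[F]})$, identify the latter with $(\Lie U)^\smallvee$, compute $\underline M$ of the vector group $\Lie U$ as $(\Lie U)^\smallvee\otimes_k k[\FF]$, and tensor with $k[\FF]$; the observation that $\VV$ acts as zero on both sides so only $\FF$ and the semilinear $W(k)$-action need tracking is also how the compatibility is disposed of. Two caveats. First, the statement is for all $U\in\CU/k$, and this category contains non-smooth (e.g.\ finite) unipotent groups, so you may not invoke faithful flatness of $\Fr_U$; but you never actually need injectivity of $\FF$ — only the cokernel identification $\underline M(U)/\FF\underline M(U)\isomto \underline M(U^{[F]})$, which follows from exactness of $\underline M$ applied to $0\to U^{[F]}\to U\to U/U^{[F]}\to 0$, the monomorphism $U/U^{[F]}\hookrightarrow U^{(p)}$, and the fact that $\underline M(\Fr_U)=\FF$ (Theorem~\ref{theo: Dieudonne modules}); this is exactly how the paper avoids any smoothness hypothesis. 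Second, the identification $\underline M(U^{[F]})\cong(\Lie U^{[F]})^\smallvee$ is the genuine input of the proof and does not follow from the equivalence of height-one groups with restricted Lie algebras by itself (that equivalence says nothing about Dieudonn\'e modules); the paper obtains it, in the form $M'/\FF M'\isomto (\Lie U')^\smallvee$ with $\FF M'=0$, from Fontaine \cite[chap.~III, 4.2]{Fo77}, so you should cite that (or prove the comparison) rather than the restricted-Lie-algebra classification. With those two repairs your argument coincides with the one in the paper, including the final reduction from $k\eps$ to general $I$ by additivity.
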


\begin{proof}
We start with the case of dimension one $I=k$, so
$\Lie(U,k\eps)=\Lie(U)$. Let $U'$ be the kernel of Frobenius
in $U$ and $M=\underline M(U)$, $M'=\underline M(U')$.
We have an exact sequence:
\[
0 \too U'\too U\stackrel{\Fr_U}{\too} U^{(p)}.
\]
We deduce isomorphisms $\Lie U\isomto \Lie U'$ and
$M'=M/\FF M$. In the sequel set $L_U:=(\Lie U)(k)$,
a $k$-vector space. Since $U'$ is a finite commutative group scheme, according to Fontaine~\cite[chap.~III, 4.2]{Fo77} there is a canonical isomorphism:
\[
\eta_{U'}:M'/\FF M' \isomto L_{U'}^\smallvee.
\]
We deduce a composed isomorphism $\eta_U$ as follows:
\[
M/\FF M \isomto M'/\FF M' \stackrel{\eta_{U'}}{\isomto}
L_{U'}^\smallvee \isomto L_U^\smallvee.
\]
From the isomorphism of $k$-group schemes
$\Lie U\simeq \VV(L_U^\smallvee)$, we deduce
$$
M(\Lie U)=\Hom_k(\VV(L_U^\smallvee),\GG_a)\isomto L_U^\smallvee\otimes k[\FF].
$$
By tensoring $\eta_U$ with $k[\FF]$, we find
\[
\Lie M=(M/\FF M)\otimes_k k[\FF] \isomto
L_U^\smallvee\otimes k[\FF] \isomto M(\Lie U).
\]
The result for general $I$ follows since
$\Lie(U,I)\simeq \Lie U\otimes_{\OO_k}\VV(I^\smallvee)$
and
$\Lie(M,I)=\Lie M\otimes_k I^\smallvee$.
\hfill $\Box$
\end{proof}

We can characterize the functor $\Lie$
on Dieudonn\'e modules by its values on the modules
$\DD/\DD\VV^n$.

\begin{prop} \label{prop:Lie algebra of D-modules}
There exists a unique covariant functor
$\mathscr{L}:{\DD\mbox{-}\Mod}^{e}\to {\DD\mbox{-}\Mod}^{e}$
with the following properties:
\begin{trivlist}
\itemn{1} $\mathscr{L}$ is right exact and commutes with filtering
inductive limits;
\itemn{2} $\mathscr{L}(\DD/\DD\VV^n)=k[\FF]^n$ for all $n\ge 1$;
\itemn{3} $\mathscr{L}:\End(\DD/\DD\VV^n) \to\End(k[\FF]^n)$ sends
\begin{itemize}
\item $\FF$ to $0$;
\item $\VV$ to the endomorphism
$(a_0,a_1,\dots,a_{n-1})\mapsto(0,a_0,a_1,\dots,a_{n-2})$;
\item and multiplication by $x=(x_0,x_1,x_2,\dots)\in W(k)$
to the diagonal endomorphism with diagonal entries
$(x_0,x_0^p,\dots,x_0^{p^{n-1}})$.
\end{itemize}
\end{trivlist}
\end{prop}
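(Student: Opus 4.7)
The plan is to prove uniqueness and existence separately.

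For uniqueness, I would use property~(1) to reduce to describing $\mathscr{L}$ on the generating family $\{\DD/\DD\VV^n\}_{n\ge 1}$ and on morphisms between its members. Every erasable $\DD$-module $M$ is a filtered colimit of finitely generated ones, and every finitely generated erasable module admits a presentation
\[ \bigoplus_j \DD/\DD\VV^{m_j} \longrightarrow \bigoplus_i \DD/\DD\VV^{n_i} \longrightarrow M \longrightarrow 0 \]
obtained by picking generators annihilated by various $\VV^{n_i}$ and writing the kernel itself as a filtered colimit of similar presentations. By~(1), $\mathscr{L}$ commutes with these colimits and is right exact, so it is determined on $M$ once we know it on the objects and morphisms of the generating family. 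The objects are fixed by~(2). As to morphisms, the endomorphism ring $\End_{\DD\text{-}\Mod}(\DD/\DD\VV^n)\cong(\DD/\DD\VV^n)^{\mathrm{op}}$ (via right multiplication, through the Demazure--Gabriel identification $\DD/\DD\VV^n\cong\End_k(W_n)$ already recalled) is generated as a $W(k)$-algebra by $\FF$, $\VV$, and the scalars $x\in W(k)$, whose images under $\mathscr{L}$ are fixed by~(3). Homomorphisms between distinct members $\DD/\DD\VV^n$ and $\DD/\DD\VV^m$ factor through these endomorphisms together with the canonical projections and inclusions induced by $\tilde\V$.

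For existence, I would set $\mathscr{L}(M):=(M/\FF M)\otimes_k k[\FF]$. The subset $\FF M$ is a $\DD$-submodule of $M$: closure under left multiplication by $\FF$ is trivial, under $\VV$ follows from $\VV\FF=p=\FF\VV$, and under $W(k)$ from $x\cdot\FF m=\FF\cdot\sigma^{-1}(x)m$, using that $\sigma$ is invertible on $W(k)$ since $k$ is perfect. Hence $M/\FF M$ is a $\DD$-module on which $\FF$ acts as zero and consequently $p=\FF\VV$ also, making it a $k$-vector space. I would equip $(M/\FF M)\otimes_k k[\FF]$ with the $\DD$-module structure where $\FF$ acts as zero, $\VV$ acts as the induced operator on the first tensor factor, and $W(k)$ acts through a Frobenius twist that produces the diagonal prescribed in~(3). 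Property~(1) is then immediate, as $M\mapsto M/\FF M$ is a cokernel (right exact, preserving filtered colimits) and $-\otimes_k k[\FF]$ is exact. For property~(2), a direct computation using the $W(k)$-basis $\{\VV^i\}_{0\le i\le n-1}\cup\{\FF^j\}_{j\ge 1}$ of $\DD/\DD\VV^n$ (with relations $p^{n-i}\VV^i=0$ and $p^n\FF^j=0$) shows that $\FF\cdot(\DD/\DD\VV^n)$ contains all $\overline{\FF^j}$ and all $p\overline{\VV^i}$, leaving the $k$-basis $\overline 1,\overline\VV,\ldots,\overline{\VV^{n-1}}$ in the quotient, so tensoring yields $k[\FF]^n$. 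Property~(3) is then verified on this basis using $\overline{\VV^i}\cdot\FF=p\overline{\VV^{i-1}}=0$ and $\overline{\VV^i}\cdot\VV=\overline{\VV^{i+1}}$ for the $\FF$ and $\VV$ entries.

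The main obstacle lies in equipping $\mathscr{L}(M)$ with the correct $\DD$-module structure so that the $W(k)$-action matches~(3). Right multiplication of $\DD/\DD\VV^n$ by $x\in W(k)$ gives $\overline{\VV^i}\cdot x=\sigma^{-i}(x)\overline{\VV^i}$, whose naive reduction to $M/\FF M$ would produce the ``wrong'' diagonal $(x_0^{p^{-i}})$ rather than the required $(x_0^{p^i})$. Reconciling this requires transporting the $\DD$-action through the identification $\mathscr{L}(M)\cong\underline M(\Lie U)$ of Proposition~\ref{prop:M commutes with Lie}, where the Dieudonn\'e convention $x\cdot u=\Fr^{1-n}(x)u$ on $W_n$ automatically introduces the appropriate Frobenius powers in the second tensor factor $k[\FF]$. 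Once this twist is fixed, the remaining verifications become mechanical.
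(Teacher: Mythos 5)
Your overall strategy coincides with the paper's (reduce uniqueness to finitely generated modules and presentations by the $\DD/\DD\VV^n$, using right exactness and the fact that $\End(\DD/\DD\VV^n)$ is generated by $W(k)$, $\FF$, $\VV$; obtain existence from the functor $\Lie M=(M/\FF M)\otimes_k k[\FF]$ together with Proposition~\ref{prop:M commutes with Lie}), but your uniqueness argument has a real gap in the treatment of morphisms. You allow presentations $\bigoplus_j \DD/\DD\VV^{m_j}\to\bigoplus_i\DD/\DD\VV^{n_i}\to M\to 0$ with varying exponents, so determining $\mathscr{L}(M)$ requires knowing $\mathscr{L}$ on maps $\DD/\DD\VV^{m}\to\DD/\DD\VV^{n}$ with $m\neq n$. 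Properties (2) and (3) only prescribe $\mathscr{L}$ on objects and on \emph{endomorphisms} of a fixed $\DD/\DD\VV^n$; your remark that cross maps ``factor through these endomorphisms together with the canonical projections and inclusions induced by $\tilde\V$'' does not close the gap, because nothing in (1)--(3) tells you what $\mathscr{L}$ does to those canonical projections and inclusions. The paper avoids the issue entirely: a finitely generated erasable module $M$ is killed by a single $\VV^n$, and $\DD/\DD\VV^n$ is a noetherian projective generator of the subcategory of modules killed by $\VV^n$, so one may take all exponents equal to that $n$ (enlarging $n$ when comparing presentations of $M$ and $M'$ along a morphism); then every map in sight is a matrix of endomorphisms of $\DD/\DD\VV^n$, which (3) together with the additivity forced by right exactness determines. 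Alternatively, your route can be repaired by deducing $\mathscr{L}$ of the canonical surjection $\DD/\DD\VV^m\to\DD/\DD\VV^n$ ($n\le m$) from right exactness, since that surjection is the cokernel of the endomorphism ``right multiplication by $\VV^n$'' of $\DD/\DD\VV^m$; but some such argument must be supplied.

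On existence you are in step with the paper: the candidate is $\Lie(-)$, the Frobenius-twist subtlety you point out in the $W(k)$-action is genuine, and the paper resolves it exactly as you propose, by invoking Proposition~\ref{prop:M commutes with Lie} and reducing the verification of (2)--(3) to a computation on the Witt groups $W_n$ and their endomorphisms (which the paper, like you, leaves as a mechanical check). One small point to keep in mind if you define the $\DD$-structure by transport through $\underline M(\Lie(\underline U(-)))$: property (1) is still immediate, since exactness and filtered colimits are detected on underlying groups and the underlying group-valued functor remains $(M/\FF M)\otimes_k k[\FF]$.
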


\begin{proof}
Uniqueness. The key
is the fact that $\DD/\DD\VV^n$ is a projective generator of
the full subcategory $C_n:=({\DD\mbox{-}\Mod}^{e})^{\VV^n=0}$
of objects killed by $\VV^n$. More precisely, since any
erasable $\DD$-module is a filtering union of its submodules
of finite type, property (1) implies that $\mathscr{L}$ is determined
by its restriction to the subcategory of modules of finite type.
Any finite type module $M$ is killed by $\VV^n$ for some
$n\ge 1$. Since $\DD/\DD\VV^n$ is noetherian, for any
$M\in C_n$ there exist $r,s$ and an exact sequence:
\[
(\DD/\DD\VV^n)^s \too (\DD/\DD\VV^n)^r\too M\too 0,
\]
and for any morphism $f:M\to M'$ in $C_n$ there is a
commutative diagram:
\[
\xymatrix{
(\DD/\DD\VV^n)^s \ar[r] \ar[d]^{h} &
(\DD/\DD\VV^n)^r \ar[r] \ar[d]^{g} & M \ar[r] \ar[d]^f & 0 \\
(\DD/\DD\VV^n)^{s'} \ar[r] &
(\DD/\DD\VV^n)^{r'} \ar[r] & M' \ar[r] & 0.}
\]
Since $\mathscr{L}$ is right exact, this implies that $\mathscr{L}(M)$
is determined by the values of $\mathscr{L}((\DD/\DD\VV^n)^r)$ for
variable~$r$, and $\mathscr{L}(f)$ is determined by the values
of $\mathscr{L}(g)$ for variable $g$ as above. Again since $\mathscr{L}$
is right exact, it is additive. Hence $\mathscr{L}((\DD/\DD\VV^n)^r)$
is determined by $\mathscr{L}(\DD/\DD\VV^n)$ which is prescribed in (2).
Similarly $\mathscr{L}(g)$ is determined by the values of $\mathscr{L}$ on the
various maps
$\DD/\DD\VV^n\into (\DD/\DD\VV^n)^r \twoheadrightarrow
\DD/\DD\VV^n$. Since $\End(\DD/\DD\VV^n)=\DD/\DD\VV^n$
is generated by $W(k)$, $F$ and $V$, the assignment in (3)
ensures uniqueness of $\mathscr{L}$.

\smallskip

\noindent Existence. Because of
Proposition~\ref{prop:M commutes with Lie}, it is enough to
check that the functors $\mathscr{L}(M(-))$ and $M(\Lie(-))$ take
the same values on the Witt groups $U=W_n$ and the
endomorphisms of these groups. This is a simple computation
which is left to the reader.
\hfill $\Box$
\end{proof}

We come to the notion of smoothness.
It is known that a $k$-group scheme of finite type $U$ is
smooth if and only if its relative Frobenius
$\Fr_{U/k}:U\to U^{(p)}$ is an epimorphism of $k$-group schemes.
This motivates the following definition.

\begin{definition}
An erasable Dieudonn\'e module $M$ is called {\em smooth}
if it is of finite type and its Frobenius morphism
$\Fr_M:M^{(p)}\to M$ is a monomorphism.
\end{definition}

\begin{definition}
An {\em $I$-extension} in ${\DD\mbox{-}\Mod}^{e}$ is
an extension of smooth erasable Dieudonn\'e modules of
the form
\[
0 \too M \too M'\too \Lie(M,I) \too 0.
\]
A {\em morphism of $I$-extensions} is a morphism of
extensions in the usual sense, that is, a morphism of
short exact sequences of $\DD$-modules.
\end{definition}

\begin{theorem} \label{theorem:Dieudonne for SCU}
Let $\SCU/k[I]$ be the category of smooth, commutative,
unipotent (i.e. with unipotent special fibre)
$k[I]$-group schemes. Let
$\DD\mbox{-}I\mbox{-}\Mod$ be the category of 
$I$-extensions of smooth erasable Dieudonn\'e modules.
Then the Dieudonn\'e functor $M$ induces a contravariant
equivalence of categories:
\[
\mathscr{M}:\SCU/k[I] \too \DD\mbox{-}I\mbox{-}\Mod
\]
that sends $\mathscr{U}$ to the extension
$0 \to \underline M(\mathscr{U}_k)\to \underline M(h_*\mathscr{U}))
\to \underline M(\Lie(\mathscr{U}_k,I))\to 0$.
A quasi-inverse functor is obtained by sending an extension
$0 \to M \to M'\to \Lie(M,I) \to 0$ to the Weil extension
$h^{\mbox{\rm\tiny +}}(\underline U(M'))$ of the extension
$0 \to \Lie(\underline U(M),I)\to \underline U(M')\to
\underline U(M)\to 0$, where $\underline U$ is a quasi-inverse
for $\underline M$.
\end{theorem}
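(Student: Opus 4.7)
The plan is to obtain the desired equivalence by composing two equivalences: the Weil restriction equivalence of Theorem~\ref{main_theorem_0} and the classical Dieudonn\'e equivalence of Theorem~\ref{theo: Dieudonne modules}, then transporting the extension structure through both functors.

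First, I would restrict Theorem~\ref{main_theorem_0} to the subcategory $\SCU/k[I]$. By property (4) of that theorem, an object $\mathscr{U}\in\Gr/k[I]$ is smooth, commutative and unipotent if and only if both $\mathscr{U}_k$ and $h_*\mathscr{U}$ are smooth, commutative and unipotent over $k$. Consequently, $h_*$ induces an equivalence between $\SCU/k[I]$ and the full subcategory $\Ext^{\rm scu}(I)/k\subset\Ext(I)/k$ of extensions
\[
1\too\Lie(U,I)\too E\too U\too 1
\]
in which $U$ is smooth, commutative, unipotent over $k$ (note that then $\Lie(U,I)$ and~$E$ are automatically smooth commutative unipotent as well, using part~(4) again together with the fact that $\Lie(U,I)$ is a finite-dimensional vector group).

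Second, I would apply the classical Dieudonn\'e functor $\underline{M}$ term-wise to such an extension. By Theorem~\ref{theo: Dieudonne modules}, $\underline{M}$ is a contravariant exact equivalence between $\CU/k$ and $\DD\text{-}\Mod^e$. Applied to the short exact sequence above, and using the identification $\underline{M}(\Lie(U,I))\simeq \Lie(\underline{M}(U),I)$ provided by Proposition~\ref{prop:M commutes with Lie}, it yields a short exact sequence of erasable Dieudonn\'e modules
\[
0\too \underline{M}(U)\too \underline{M}(E)\too \Lie(\underline{M}(U),I)\too 0.
\]
The Dieudonn\'e modules $\underline{M}(U)$ are of finite type (since $U$ is of finite type) and their Frobenius morphisms are monomorphisms: indeed, $U$ being smooth is equivalent to the relative Frobenius $\Fr_{U/k}:U\to U^{(p)}$ being an epimorphism, which becomes, after applying the contravariant $\underline{M}$ and using the canonical isomorphism $\underline{M}(U^{(p)})\simeq \underline{M}(U)^{(p)}$, a monomorphism $\Fr_{\underline{M}(U)}:\underline{M}(U)^{(p)}\to\underline{M}(U)$; hence $\underline{M}(U)$ is smooth in the sense of the definition. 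Thus the output is genuinely an $I$-extension in the sense of the statement.

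Third, I would produce a quasi-inverse by running the construction in reverse. Given an $I$-extension $0\to M\to M'\to\Lie(M,I)\to 0$, apply the quasi-inverse $\underline{U}$ of $\underline{M}$ to get a short exact sequence of $k$-group schemes; by Proposition~\ref{prop:M commutes with Lie} again (applied to $U:=\underline{U}(M)$, whose Dieudonn\'e module is~$M$), this takes the form
\[
0\too \Lie(\underline{U}(M),I)\too \underline{U}(M')\too \underline{U}(M)\too 0,
\]
which is an object of $\Ext^{\rm scu}(I)/k$ (the outer terms are smooth, so by the same argument as in Theorem~\ref{main_theorem_0}(4) they are in $\Gr/k$). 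Applying the Weil extension functor $h^{\mbox{\rm\tiny +}}$ of Section~\ref{section:functor tit} then yields an object of $\SCU/k[I]$. The fact that the two constructions are quasi-inverse follows formally from the fact that each of $h_*$, $\underline{M}$ and their quasi-inverses are mutually quasi-inverse.

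The only real subtlety I anticipate is the compatibility of notions of smoothness on both sides: namely, that the definition of a smooth erasable Dieudonn\'e module (finite type, Frobenius injective) matches exactly the condition that $\underline{U}(M)$ be a smooth unipotent $k$-group scheme, and moreover that both outer terms of an $I$-extension being smooth implies (via the five-lemma-type argument used in Theorem~\ref{main_theorem_0}(2)) that the middle term is smooth too. The remaining verifications -- exactness of the composite, naturality, and that $\Lie(\underline{M}(U),I)$ is itself a smooth erasable module (which is clear since it is $\Lie(U,I)^\vee\otimes k[\FF]$ with $\FF$ acting as the shift) -- are straightforward consequences of Theorem~\ref{theo: Dieudonne modules} and Proposition~\ref{prop:Lie algebra of D-modules}.
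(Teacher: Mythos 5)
Your proposal is correct and follows essentially the same route as the paper: the proof is exactly the composition of the equivalence of Theorem~\ref{main_theorem_0} (restricted via its part~(4) to the smooth commutative unipotent objects) with the classical Dieudonn\'e equivalence of Theorem~\ref{theo: Dieudonne modules}, using Proposition~\ref{prop:M commutes with Lie} to identify $\underline M(\Lie(U,I))$ with $\Lie(\underline M(U),I)$. Your extra verification that smoothness of $U$ translates, via the contravariance of $\underline M$ and $\underline M(U^{(p)})\simeq\underline M(U)^{(p)}$, into injectivity of Frobenius on $\underline M(U)$ is a detail the paper leaves implicit but is entirely consistent with its argument.
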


\begin{proof}
It suffices to put together Theorem~\ref{main_theorem_0}
and Theorem~\ref{theo: Dieudonne modules}. In little more
detail, let $\mathscr{U}$ be a smooth, commutative, unipotent group
scheme over the ring of dual numbers $k[I]$, and let $U=\mathscr{U}_k$
be its special fibre. By Theorem~\ref{main_theorem_0}
this datum is equivalent to an extension
$$
0\too \Lie(U,I)\too E\too U\too 0
$$
with $E=h_*\mathscr{U}$ smooth, commutative, unipotent. By
Theorem~\ref{theo: Dieudonne modules} this is equivalent
to an extension
$$
0\too \underline M(U)\too \underline M(E)
\too \underline M(\Lie(U,I))\too 0.
$$
Since
$\Lie(\underline M(U),I) \isomto \underline M(\Lie(U,I))$
by Proposition~\ref{prop:M commutes with Lie},
we obtain an $I$-extension as desired.
\hfill $\Box$
\end{proof}

\appendix{A}{Differential calculus on group schemes}
\label{section:differential calculus}

In this appendix
we review the notions of tangent bundle and Lie algebra in the
required generality. We introduce the exponential morphism
of a $k$-group scheme and we establish its main properties,
including some special properties needed in the paper. Finally
we show how the use of the group algebra allows to recover
easily the deformation theory of smooth affine group schemes.
Proofs are often omitted, especially in~\ref{TG and Lie G}
and~\ref{subsection:exp and inf translation}.

\subsection{Tangent bundle and Lie algebra}
\label{TG and Lie G}

Let $k$ be a ring and $I$ a free $k$-module of finite rank $r\ge 1$
with dual $I^\smallvee=\Hom_k(I,k)$.
Let $k[I]$ be the algebra of dual numbers, i.e. $k[I]:=k\oplus I$
with multiplication determined by the condition $I^2=0$.
Let $h:\Spec(k[I])\to\Spec(k)$ be the structure map and
$i:\Spec(k)\to\Spec(k[I])$ the closed immersion. Basic
structure facts on the ring schemes $\OO_k$ and $\OO_{k[I]}$
are recalled in Paragraph~\ref{defs:base ring scheme}.

Let $G$ be a $k$-group scheme with unit section
$e:\Spec(k)\to G$. The {\em tangent bundle of $G/k$ relative
to~$I$} is the $k$-group scheme defined by:
\[
\T(G,I):=h_*h^*G.
\]
The $(h^*,h_*)$ adjunction (see
Subsection~\ref{subsection:Weil restriction}) provides
two morphisms of group schemes:
\[
\alpha_G:G\too \T(G,I) \quad , \quad
\beta_{h^*G}:h^*\T(G,I)=h^*h_*h^*G\too h^* G.
\]
From these we derive $\pi_G:=i^*\beta_{h^*G}:\T(G,I)\to G$
and the {\em Lie algebra of $G/k$ relative to $I$}:
\[
\Lie(G,I):= \ker(\pi_G).
\]
The map $\T(G,I)\to G$ is a vector bundle which can be
described in terms of derivations. If we identify the
$k$-module $I$ with the corresponding locally free sheaf
on $\Spec(k)$, then for all points $f:S\to G$ with
values in a $k$-scheme $S$ we have
(\cite[Expos\'e~II, Prop.~3.3]{SGA3.1}):
\[
\Hom_G(S,\T(G,I)) \isomto \Der^f(\cO_G,f_*\cO_S\otimes_k I).
\]
In particular $\Lie(G,I)\to\Spec(k)$ is an $\OO_k$-Lie algebra
scheme such that for all $S/k$ we have
\[
\Hom_k(S,\Lie(G,I))\isomto \Der^e(\cO_G,e_*\cO_S\otimes_kI).
\]
It supports the {\em adjoint representation}, i.e. the action
of $G$ by conjugation inside $\T(G,I)$:
\[
\Ad:G\to\GL(\Lie(G,I)) \quad,\quad
\Ad(g)(x)=\alpha_G(g)x\alpha_G(g)^{-1}
\]
for all points $g$ of $G$ and $x$ of $\Lie(G,I)$.
Applying the functor Lie, that is differentiating
at the unit section of $G$, we obtain the infinitesimal
adjoint representation of the Lie algebra:
\[
\ad:\Lie G\to\End(\Lie(G,I))).
\]
When $I=k\eps$, from $\ad$ we deduce the bilinear form
called {\em bracket} $[-,-]:\Lie G\times \Lie G\to\Lie G$.
That is, we have
$[x,y]:=(\ad x)(y)$ for all points $x,y$ of $\Lie G$.

The tangent bundle $\T(G,I)$ carries a structure of extension
as follows. By the triangle identity of adjunction,
$h^*\alpha_G$ is a section of $\beta_{h^*G}$, hence $\alpha_G$
is a section of $\pi_G$. Letting $\gamma_G$
be the inclusion of $\Lie(G,I)$ into $\T(G,I)$,
we have a split exact sequence:
\[
\xymatrix@C=8mm{
1 \ar[r] & \Lie(G,I) \ar[r]^-{\gamma_G}
& \T(G,I) \ar@<.5ex>[r]^-{\pi_G} & G \ar[r]
\ar@<.5ex>[l]^-{\alpha_G} & 1.}
\]
This is an exact sequence of functors, hence
also an exact sequence of group schemes (i.e. of fppf
sheaves). Let $m$ be the multiplication of $\T(G,I)$.
The splitting gives rise to an isomorphism of $k$-schemes:
\[
\xymatrix@C=12mm{
\varrho_G:\Lie(G,I)\times G
\ar[r]^-{\gamma_G\times\alpha_G} &
\T(G,I)\times\T(G,I) \ar[r]^-m & \T(G,I),}
\]
that is, any point of $\T(G,I)$ may be written uniquely as
a product $\gamma_G(x)\cdot\alpha_G(g)$ for some points
$x\in\Lie(G,I)$ and $g\in G$. We will sometimes
write briefly $(x,g)=\gamma_G(x)\cdot\alpha_G(g)$ to denote
this point of $\T(G,I)$. The conjugation action of $G$ on
$\Lie(G,I)$ related
to the extension structure is given by the adjoint action,
thus the group structure of $\T(G,I)$ can be described by:
\[
(x,g)\cdot (x',g')=(x+\Ad(g)x',gg').
\]

The dependence of $\T(G,I)$ and $\Lie(G,I)$ on $I$ can be
described further. If $I=k\eps$ so that $k[I]=k[\eps]$ with
$\eps^2=0$, we write simply $\T\!G=\T(G,k\eps)$ and
$\Lie G=\Lie(G,k\eps)$ and we call them the {\em tangent bundle}
and the {\em Lie algebra} of $G$. Let $u:G\to \Spec(k)$ be the structure map. For general $I$, the isomorphisms
$\Der^f(\cO_G,f_*\cO_S)\otimes_kI\isomto
\Der^f(\cO_G,f_*\cO_S\otimes_k I)$
functorial in $S/G$ induce an isomorphism of vector bundles:
\[
\T\!G\otimes_{\,\OO_G}\!u^*\VV(I^\smallvee)\isomto \T(G,I).
\]
Similarly, the isomorphisms $\Der(\cO_G,f_*\cO_S)\otimes_kI\isomto
\Der(\cO_G,f_*\cO_S\otimes_kI)$
functorial in $S/k$ induce an isomorphism of $\OO_k$-Lie
algebra schemes:
\[
\Lie G\otimes\VV(I^\smallvee)\isomto \Lie(G,I).
\]
With $G$ acting trivially on $\VV(I^\smallvee)$, this
isomorphism is $G$-equivariant.

\begin{item-title}{Idempotence of Lie}
An important property for us will be the idempotence
of the Lie functor, namely the existence of an isomorphism
$d:\Lie(G,I) \to \Lie(\Lie(G,I))$ as in
\cite[Expos\'e~II, 4.3.2]{SGA3.1}. To describe it, let $I,J$
be two finite free $k$-modules. Note that
$R[I][J]=R\oplus IR\oplus JR\oplus IJR$. If $G=\Spec(A)$
is affine, for a $k$-algebra $R$, the elements of the set
$G(R[I][J])$ are the maps $f:A\to R[I][J]$ written
$f(x)=r(x)+s(x)+t(x)+u(x)$ where $r:A\to R$ is an algebra map,
$s:A\to IR$ and $t:A\to JR$ are $r$-derivations, and
$u:A\to IJR$ satisfies the identity:
\[
u(xy)=r(x)u(y)+r(y)u(x)+s(x)t(y)+s(y)t(x).
\]
Thus $u$ is an $r$-derivation if $s=0$ or $t=0$.
Consider the two maps
$p:R[I][J]\to R[I]$, $J\mapsto 0$ and
$q:R[I][J]\to R[J]$, $I\mapsto 0$.
Unwinding the definition we see that:
\[
(\Lie(\Lie(G,I),J)(R)
=\ker\big(G(R[I][J])
\stackrel{(p,q)}{\tooo} G(R[I])\times G(R[J])\big).
\]
For varying $R$, the maps $R[I\otimes_k J]\to R[I][J]$,
$i\otimes j\mapsto ij$ induce a morphism of Lie algebra schemes
$d:\Lie(G,I\otimes_kJ) \too \Lie(\Lie(G,I),J)$.

\begin{lemma} \label{lemma:isom d}
The morphism $d:\Lie(G,I\otimes_kJ) \to \Lie(\Lie(G,I),J)$
is an isomorphism. \hfill $\square$
\end{lemma}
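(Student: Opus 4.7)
The plan is to check that $d$ is an isomorphism on functors of points by making the description of both sides completely explicit. Since the assertion is local near the identity and derivations can be computed on an affine neighbourhood, it is enough to treat the case $G = \Spec(A)$ affine; the general case will follow by working over any affine open containing the unit section.

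First I would unwind the iterated kernel that defines $\Lie(\Lie(G,I),J)$. By definition, an $R$-point of $\Lie(\Lie(G,I),J)$ is the kernel of the map $G(R[I][J]) \to G(R[I]) \times G(R[J])$ induced by the two projections $J \mapsto 0$ and $I \mapsto 0$. Using the decomposition from the preamble, $f \in G(R[I][J])$ is written uniquely as $f = r + s + t + u$ with $r : A \to R$, $s : A \to IR$, $t : A \to JR$, $u : A \to IJR$. The vanishing of the image in $G(R[J])$ forces $r = e_R$ and $s = 0$, and the vanishing of the image in $G(R[I])$ further forces $t = 0$. Hence $f = e_R + u$; moreover, because $(IJR)^2 \subset (I^2 + J^2)R = 0$, the quadratic term $s(x)t(y) + s(y)t(x)$ disappears and the algebra identity for $u$ recalled in the preamble becomes the $e_R$-derivation identity $u(xy) = e_R(x)u(y) + e_R(y)u(x)$. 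Therefore $(\Lie(\Lie(G,I),J))(R)$ identifies, functorially in $R$, with the module of $e_R$-derivations $A \to IJ\otimes_k R$, where $IJ$ denotes the product ideal inside $k[I][J]$.

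Next I would identify the source: $(\Lie(G, I\otimes_k J))(R)$ is by definition the module of $e_R$-derivations $A \to (I\otimes_k J)\otimes_k R$. The morphism $d$ is induced, on $R$-points, by the $k$-algebra map $R[I\otimes_k J] \to R[I][J]$ sending $i\otimes j$ to the product $ij \in IJ$; at the level of derivation targets this is the multiplication map $I\otimes_k J \to IJ$, $i\otimes j \mapsto ij$. Because $I$ and $J$ are free $k$-modules of finite rank, this multiplication map is an isomorphism of $k$-modules (a basis of $I\otimes_k J$ maps bijectively to a basis of $IJ$). Postcomposition with this isomorphism is a bijection on $e_R$-derivations, so $d(R)$ is a bijection for every $R$, and $d$ is an isomorphism of $k$-schemes (in fact of $\OO_k$-Lie algebra schemes, as the constructions involved are additive and $k$-linear).

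The argument is mostly bookkeeping, and there is no serious obstacle; the only point that really matters is that the freeness of $I$ and $J$ makes $I\otimes_k J \to IJ$ an isomorphism, which is what allows one to swap the ``internal'' and ``external'' roles of the two square-zero thickenings.
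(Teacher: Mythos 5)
Your proof is correct and takes essentially the route the paper intends: the lemma is left without proof precisely because the explicit decomposition $f=r+s+t+u$ of points of $G(R[I][J])$ given just before it reduces everything to the observation you make, namely that the kernel consists of the maps $e_R+u$ with $u$ an $e_R$-derivation into $IJR\simeq (I\otimes_k J)\otimes_k R$, so that $d(R)$ is postcomposition with an isomorphism of targets. (Only a cosmetic slip: reducing modulo $I$, i.e.\ the image in $G(R[J])$, kills $s$ and $u$ and hence forces $t=0$, while the image in $G(R[I])$ forces $s=0$ — you swapped the two projections — but since both conditions are imposed the conclusion $f=e_R+u$ is unaffected.)
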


\bigskip

For simplicity of notation, we will write $d$ as an
equality:
$\Lie(G,I\otimes_kJ)=\Lie(\Lie(G,I),J)$. This will
not cause any ambiguity. If $I=J=k\eps$ this means simply
that $\Lie G=\Lie(\Lie G)$.
\end{item-title}

\subsection{Exponential and infinitesimal translation}
\label{subsection:exp and inf translation}
Demazure and Gabriel in~\cite{DG70} use an exponential notation
which is flexible enough to coincide in some places with the
morphism $\exp_G$ as we define it below ({\em loc. cit.}
chap.~II, \S~4, 3.7) and in other places with the morphism
$\gamma_G$ ({\em loc. cit.} chap.~II, \S~4, 4.2). The drawback
of flexibility is a little loss of precision.
We introduce the exponential in a somehow more formal way,
as an actual morphism between functors.

\begin{definition} \label{def:exponential}
The {\em exponential} of a $k$-group scheme $G$ is the
composition:
\[
\xymatrix@C=12mm{
\exp_{G,I} : h^*\Lie(G,I) \ar[r]^-{h^*\gamma_G} &
h^*\T\!G \ar[r]^-{\beta_{h^*G}} & h^*G.}
\]
\end{definition}

When $I$ is clear from context, and also when $I=k\eps$,
we write $\exp_G$ instead of $\exp_{G,I}$. The following
proposition collects some elementary properties of the exponential.

\begin{prop} \label{prop:elementary properties of exp}
The exponential $\exp_{G,I}$ of a $k$-group
scheme $G$ has the following properties.
\begin{trivlist}
\itemn{1. Functoriality} For all morphisms of group functors
$f:G\to G'$ we have a commutative square:
\[
\xymatrix@C=15mm{
h^*\Lie(G,I) \ar[r]^-{\exp_{G,I}} \ar[d]_-{h^*\Lie(f)}
& h^*G \ar[d]^{h^*f} \\
h^*\Lie(G',I) \ar[r]^-{\exp_{G',I}} & h^*G'.}
\]
\itemn{2. Equivariance} The map $\exp_{G,I}$ is $h^*G$-equivariant
for the adjoint action on $h^*\Lie(G,I)$ and the conjugation
action of $h^*G$ on itself.
\itemn{3. Infinitesimal translation}
Let $x$ be a point of $h^*\Lie(G,I)$, $g$ a point of $h^*G$
and $(x,g)=h^*\gamma_G(x)\cdot h^*\alpha_G(g)$ the corresponding
point of $h^*\T(G,I)$. Then we have
$\beta_{h^*G}(x,g)=\exp_{G,I}(x)g$.
\itemn{4. Adjoint action of exponentials}
Using the description $\Lie(G,I)=\Lie G\otimes\VV(I^\smallvee)$,
the morphism
\[
h^*\!\Ad\circ\exp_{G,I}:h^*\Lie(G,I) \to h^*\GL(\Lie(G,I))
\]
is equal to $x\otimes i \mapsto \id+i\ad(x)$, that is,
$\Ad(\exp_{G,I}(x\otimes i))x'=x'+i[x,x']$.
\itemn{5. Exponential of a Lie algebra}
Let $J$ be another finite free $k$-module. Using the isomorphism
\[
\Lie(G,J)\otimes\VV(I^\smallvee)\simeq
\Lie(G,J\otimes_k I)\stackrel{d}{\isomto}\Lie(\Lie(G,J),I)
\]
from Lemma~\ref{lemma:isom d}, the morphism of $k[I]$-group schemes
\[
\xymatrix@C=20mm{
h^*\Lie(G,J)\otimes_{\OO_{k[I]}}h^*\VV(I^\smallvee) = h^*\Lie(\Lie(G,J),I)
\ar[r]^-{\exp_{\Lie(G,J),I}} & h^*\Lie(G,J)}
\]
is given by the external law of the $\OO_{k[I]}$-module
scheme $h^*\Lie(G,J)$. In particular, its image is
$I\!\cdot\!h^*\Lie(G,J)$ and its kernel contains
$I\!\cdot\!h^*\Lie(\Lie(G,J),I)$. Besides, if $I=k\eps$
the map $\exp_{\Lie(G,J)}:h^*\Lie(G,J)\to h^*\Lie(G,J)$ is
multiplication-by-$\eps$ in the module scheme $h^*\Lie(G,J)$.
\itemn{6. Kernel} The two maps:
\[
\exp_{G,I}:h^*\Lie(G,I)\too h^*G
\ ,\quad
\exp_{\Lie(G),I}:h^*\Lie(G,I)\stackrel{h^*d}{\simeq}
h^*\Lie(\Lie(G),I) \too h^*\Lie(G)
\]
have equal kernels, thus
$I\!\cdot\!h^*\Lie(G,I)\subset\ker(\exp_{G,I})$.
In particular, in case $I=k\eps$, the kernel of the morphism
$\exp_G:h^*\Lie G\to h^*G$ is equal to the kernel of
the multiplication-by-$\eps$ map in $h^*\Lie G$. \hfill $\square$
\end{trivlist}
\end{prop}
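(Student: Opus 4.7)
The strategy is to treat the six items in order, each building on the preceding ones, with all reductions coming from naturality of the $(h^*,h_*)$-adjunction, the triangle identity $\beta_{h^*G}\circ h^*\alpha_G=\id_{h^*G}$, and the explicit splitting $\varrho_G$ of the tangent extension.

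Items (1)--(3) are formal. For (1), naturality of $\alpha$, $\beta$ and $\gamma$ in the group argument, plus the fact that $h^*$ is a functor, makes each piece of the composition $\beta_{h^*G}\circ h^*\gamma_G$ compatible with $f$; assembling these gives the square. For (3), since $\beta_{h^*G}$ is a morphism of groups and $\beta_{h^*G}\circ h^*\alpha_G=\id_{h^*G}$, one computes directly
\[
\beta_{h^*G}(\gamma_G(x)\cdot \alpha_G(g))=\beta_{h^*G}(h^*\gamma_G(x))\cdot \beta_{h^*G}(h^*\alpha_G(g))=\exp_{G,I}(x)\cdot g.
\]
For (2), the adjoint action is by definition inner conjugation inside $\T(G,I)$; the group morphism $\beta_{h^*G}$ transports conjugation by $\alpha_G(g)$ to conjugation by $g$, which applied to an exponential yields the stated equivariance.

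For (4), I would apply (1) to the morphism $\Ad:G\to \GL(\Lie(G,I))$, which together with $\Lie(\Ad)=\ad$ yields
\[
h^*\Ad\circ \exp_{G,I}=\exp_{\GL(\Lie(G,I)),I}\circ h^*\ad.
\]
It then remains to compute $\exp_{\GL(V),I}$ for $V$ any $\OO_k$-module scheme: the embedding $\GL(V)\subset \End(V)$ and the nilpotence $I^2=0$ identify $\Lie(\GL(V),I)$ with $I\cdot \End(V)$, and by inspection on the functor of points via the explicit description of $\beta$ one sees that the exponential is $i\otimes \phi\mapsto \id+i\phi$. Substituting $\phi=\ad(x)$ gives the formula.

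Finally, (5) and (6) rest on the structure of tangent extensions of vector bundles. For (5), when $V$ is an $\OO_k$-module scheme the tangent extension is abelian and splits canonically, so $\beta_{h^*V}(v,x)=v+x$ where $\Lie(V,J)\hookrightarrow h^*V$ arises from the $\OO_{k[J]}$-module structure; specialising $V=\Lie(G,J)$ and using the identification $\Lie(\Lie(G,J),I)\simeq \Lie(G,J)\otimes \VV(I^\smallvee)$ of Lemma~\ref{lemma:isom d} identifies $\exp_{\Lie(G,J),I}$ with the external action of $I$ on $h^*\Lie(G,J)$, from which the image and kernel statements follow. Item (6) is then obtained by comparing $\exp_{G,I}$ with $\exp_{\Lie G,I}\circ h^*d$ through a diagram assembled from functoriality (1) applied to $\gamma_G:\Lie G\to \T G$ and to the projection out of the splitting $\varrho_G$; once the two kernels are shown to coincide under $h^*d$, the containment $I\cdot h^*\Lie(G,I)\subset \ker \exp_{G,I}$ drops out of (5) applied to $V=\Lie G$. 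The main obstacle I expect is the careful bookkeeping of the idempotence isomorphism $d$ through this comparison diagram.
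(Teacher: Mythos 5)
Your plan is sound, but note that the paper deliberately gives no written proof of this proposition (the appendix states that proofs there are mostly omitted); the only hint it offers is in Subsection~\ref{subsection:defos}, where it says these properties ``can be derived painlessly'' by the group-algebra method of Proposition~\ref{prop:embeddings in OG}. So your route is genuinely different from the intended one. You argue formally: (1)--(3) from naturality of $\alpha,\beta,\gamma$ and the triangle identity, (4) by functoriality applied to $\Ad$ plus an explicit computation of $\exp_{\GL(V),I}$, and (5)--(6) by unwinding $\beta$ and the idempotence isomorphism $d$ on points. The paper's preferred route instead embeds $G$, $\T(G,I)$ and $\Lie(G,I)$ into the algebra scheme $\OO_k[G][I]$, where $\gamma_G(x)$ becomes $1+x$ and $\beta_{h^*G}$ is literally ``substitute the structural $I$ for the formal one''; there (2), (4), (5), (6) reduce to one-line identities such as $g(1+x)g^{-1}=1+gxg^{-1}$ and $(1+x)$ specializing to $1+i_Rx$, with no separate bookkeeping of $d$. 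What your approach buys is independence from Section~\ref{group algebra}; what the group algebra buys is exactly the bookkeeping you flag as the main obstacle.

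Two small repairs to your sketch. In (6), the ``projection out of the splitting $\varrho_G$'', i.e.\ $\T G\to\Lie G$, $(x,g)\mapsto x$, is not a morphism of groups (the extension is only a semidirect product), so item (1) as stated does not apply to it; you need the pointed-scheme version of functoriality of $\exp$, which does hold because $\beta$ is natural for arbitrary morphisms and $\Lie(f)$ is by definition the restriction of $\T(f)$ — this is exactly how the paper itself uses functoriality in Lemma~\ref{lemma:cocycle vanishes with eps}. Alternatively, and more simply, the equality of the two kernels is a one-line check on $R$-points: an $e_R$-derivation $\delta:A\to I\otimes_kR$ is killed by $\exp_{G,I}$ iff its specialization $\delta_R:A\to R$ vanishes, and the same condition describes the kernel of $\exp_{\Lie(G),I}\circ h^*d$. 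Finally, in (5) your indices slip ($\OO_{k[J]}$ should be $\OO_{k[I]}$, and the relevant inclusion is of $h^*\Lie(V,I)$ into $h^*\T(V,I)$, not into $h^*V$); the substance — that under $d$ and $\Lie(G,J\otimes_kI)\simeq\Lie(G,J)\otimes\VV(I^\smallvee)$ the counit becomes the external $I$-action, whence the statements on image and kernel since $I^2=0$ — is correct once that unwinding is written out.
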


We finish this subsection with a corollary of the computation
of the exponential of a Lie algebra.

\begin{lemma} \label{lemma:cocycle vanishes with eps}
Let $G,H$ be two group schemes over $k$. Let $\phi:G\to \Lie(H,I)$
be a morphism of pointed schemes. Let $i$ be a section of
the ideal $I\!\cdot\!\OO_{k[I]}\subset \OO_{k[I]}$. Then the
following compositions are both equal to the trivial morphism:
\begin{trivlist}
\itemn{1}
$\xymatrix@C=12mm{
h^*\Lie(G,I)\ar[r]^-{\exp_{G,I}} & h^*G \ar[r]^-{h^*\phi}
& h^*\Lie(H,I) \ar[r]^-{i} & h^*\Lie(H,I)}$.
\itemn{2}
$\xymatrix@C=12mm{
h^*\Lie(G,I)\ar[r]^-{\exp_{G,I}} & h^*G \ar[r]^-{h^*\phi}
& h^*\Lie(H,I) \ar[r]^-{\exp_{H,I}} & h^* H}$.
\end{trivlist}
\end{lemma}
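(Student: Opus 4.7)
The plan is to reduce both parts (1) and (2) to a single claim: the composition
\[
\rho := h^*\phi\circ \exp_{G,I}:h^*\Lie(G,I)\too h^*\Lie(H,I)
\]
factors through the subgroup $I\!\cdot\!h^*\Lie(H,I)$. Granting this, statement (1) is immediate because $I\!\cdot\!\OO_{k[I]}$ is a square-zero ideal, so multiplying a section in $I\!\cdot\!h^*\Lie(H,I)$ by a section $i$ of $I\!\cdot\!\OO_{k[I]}$ yields zero. For statement (2), Proposition~\ref{prop:elementary properties of exp}(6) applied to $H$ gives $I\!\cdot\!h^*\Lie(H,I)\subset\ker(\exp_{H,I})$, so $\exp_{H,I}\circ\rho$ vanishes as well.

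To prove the claim, I would first check that the restriction $i^*\exp_{G,I}$ is constantly equal to the unit of $G$. This drops out of the definition $\exp_{G,I}=\beta_{h^*G}\circ h^*\gamma_G$: applying $i^*$ and using $i^*h^*=\id$ gives $i^*\beta_{h^*G}\circ \gamma_G=\pi_G\circ \gamma_G$, which is trivial because $\gamma_G$ is by construction the inclusion of $\ker(\pi_G)$ into $\T(G,I)$. Functorially, for every $k[I]$-algebra $R$ and every point $x\in h^*\Lie(G,I)(R)$, the element $\exp_{G,I}(x)\in G(R)$ reduces to the unit of $G(R/I_R)$, where $I_R:=I\!\cdot\!R$.

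Since $\phi:G\to \Lie(H,I)$ is a morphism of pointed schemes sending the unit section of $G$ to the zero section of $\Lie(H,I)$, the section $h^*\phi(\exp_{G,I}(x))\in \Lie(H,I)(R)$ vanishes modulo $I_R$. Identifying the vector bundle $\Lie(H,I)\simeq \Lie H\otimes_{\OO_k}\VV(I^\smallvee)$, the subfunctor of sections reducing to zero modulo $I$ coincides with $I\!\cdot\!h^*\Lie(H,I)$, which proves the claim. The one step that deserves a moment of care is precisely this identification between ``vanishing modulo $I$'' and ``divisible by $I$'', which uses that $h^*\Lie(H,I)$ is a flat $\OO_{k[I]}$-module scheme; the remainder of the argument is a formal manipulation with the $(h^*,h_*)$-adjunction and the already established elementary properties of $\exp$.
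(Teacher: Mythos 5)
Your reduction of (1) and (2) to the single claim is the same skeleton as the paper's argument, and your computation that $i^*\exp_{G,I}=\pi_G\circ\gamma_G$ is trivial, hence that $h^*\phi\circ\exp_{G,I}$ vanishes after reduction modulo $I$, is correct. The gap is in the very step you flag: passing from ``vanishes modulo $I$'' to ``lies in $I\!\cdot\!h^*\Lie(H,I)$''. The lemma assumes nothing about $H$ (no smoothness, no differential flatness), so the flatness of $h^*\Lie(H,I)$ as an $\OO_{k[I]}$-module scheme that you invoke is not a hypothesis and is false in general. Concretely, writing $\omega_H=e^*\Omega^1_{H/k}$, a point of $h^*\Lie(H,I)$ over a $k[I]$-algebra $R$ is a $k$-linear map $v:\omega_H\to I\otimes_k R$ (equivalently $\omega_H\otimes_k I^\smallvee\to R$); vanishing modulo $I$ means the components of $v$ take values in $IR$, whereas membership in $I\!\cdot\!h^*\Lie(H,I)$ means $v$ factors through the generally non-injective surjection $I\otimes_k R\twoheadrightarrow IR$. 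When $\omega_H$ is not $k$-projective (e.g.\ $H=\mu_p$ over $k=\ZZ$ or $k=\ZZ/p^2$, where $\omega_H\simeq k/pk$) such a lift need not exist, so your identification of the two subfunctors breaks. This generality matters: the lemma is used in Lemma~\ref{lemma:properties of cocycle} and Proposition~\ref{prop:sous-groupe Klambda(E_c)} for an arbitrary affine $k$-group scheme, and the flatness statements of Corollary~\ref{coro1} are only available for flat rigid schemes.

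The gap can be closed in two ways. The paper's route avoids the mod-$I$ argument altogether: since $\exp=\beta\circ h^*\gamma$ and both $\beta$ and the inclusion of the fibre over the unit section are natural for pointed morphisms, $\exp$ is functorial in the pointed map $\phi$, giving $h^*\phi\circ\exp_{G,I}=\exp_{\Lie(H,I),I}\circ h^*\!\diff\phi$; by Proposition~\ref{prop:elementary properties of exp}(5) the image of $\exp_{\Lie(H,I),I}$ is exactly $I\!\cdot\!h^*\Lie(H,I)$, which is your claim with no flatness needed. Alternatively, your own computation suffices if you drop the divisibility claim: for (1), a section of $h^*\Lie(H,I)$ whose values lie in $IR$ is killed by multiplication by $i$ simply because $I^2=0$; for (2), by~\ref{prop:elementary properties of exp}(6) the kernel of $\exp_{H,I}$ equals that of $\exp_{\Lie(H),I}$, which by~\ref{prop:elementary properties of exp}(5) is the external multiplication by $I$ and therefore also annihilates every section with values in $IR$, again because $I^2=0$. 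Either repair makes your argument complete.
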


\begin{proof}
By functoriality of $\exp$ we have a commutative diagram:
\[
\xymatrix@C=25mm{
h^*\Lie(G,I) \ar^{\exp_{G,I}}[r] \ar_{h^*\diff\!\phi}[d]
& h^*G\ar^{h^*\!\phi}[d] \\
h^*\Lie(\Lie(H,I),I) \ar^-{\exp_{\Lie(H,I),I}}[r]
& h^*\Lie(H,I).}
\]
According to
Proposition~\ref{prop:elementary properties of exp}(5),
the image of $\exp_{\Lie(H,I),I}$ is equal to the subfunctor
$I\cdot h^*\Lie(G,I)$. Since $I^2=0$, from this (1) follows.
Moreover, by~\ref{prop:elementary properties of exp}(6)
the map $\exp_{H,I}$ has the same kernel as
$\exp_{\Lie(H),I}$ which
by~\ref{prop:elementary properties of exp}(5) contains
$I\!\cdot\!h^*\Lie(H,I)$. Again, since $I^2=0$, point (2) follows.
\hfill $\Box$
\end{proof}

\subsection{Deformations of affine group schemes}
\label{subsection:defos}

In this subsection, we illustrate the usefulness of the group
algebra in two ways. First, in
Proposition~\ref{prop:embeddings in OG}
we show how concepts of differential calculus can be handled
very conveniently using the group algebra. We include the
examples of the adjoint action and the Lie bracket. The
results of Proposition~\ref{prop:elementary properties of exp}
can be derived painlessly in a similar fashion. Then
in Proposition~\ref{rigid defos of affine groups} we show how
to recover directly the fact that isomorphism
classes of deformations over $k[I]$ of a smooth, affine $k$-group
scheme $G$ are classified by the second cohomology group
$\mathrm{H}^2(G,\Lie(G,I))$.

\begin{prop} \label{prop:embeddings in OG}
Let $G$ be an affine $k$-group scheme and $(\OO_k[G],+,\star)$
its group algebra.
\begin{trivlist}
\itemn{1} The tangent bundle $\T(\OO_k[G],I)$ of the group algebra $\OO_k[G]$ is canonically isomorphic to the algebra scheme $\OO_k[G][I]=\OO_k[G]\oplus \OO_k[G]\!\cdot\!I$.
We have a commutative diagram of affine monoid schemes:
\[
\xymatrix@C=5mm{
1 \ar[r] & \underset{}{\Lie G\otimes\VV(I^\smallvee)}
\ar[rr] \ar@{^(->}[d]
& & \underset{}{\T(G,I)} \ar[rr] \ar@{^(->}[d]
& & \underset{}{G} \ar[r] \ar@{^(->}[d] & 1 \\
1 \ar[r] & \underset{}{\overset{}{(\OO_k[G]\!\cdot\!I,+)}}
\ar[rr] \ar@{=}[d]
& & \underset{}{\overset{}{(\OO_k[G][I]^\times,\star)}}
\ar[rr] \ar@{^(->}[d]
& & \underset{}{\overset{}{(\OO_k[G]^\times,\star)}}
\ar[r] \ar@{^(->}[d] & 1 \\
& \overset{}{(\OO_k[G]\!\cdot\!I,+)} \ar[rr]^-{a_2\mapsto 1+a_2}
& & \overset{}{(\OO_k[G][I],\star)} \ar[rr]^-{a_1+a_2\mapsto a_1}
& & \overset{}{(\OO_k[G],\star)} & }
\]
The first two rows are split exact sequences of group
schemes. In the last row we have written the points of
$\OO_k[G][I]$ as $a=a_1+a_2$ with $a_1\in\OO_k[G]$ and
$a_2 \in \OO_k[G]\!\cdot\!I$.
The map
$\Lie G\otimes\VV(I^\smallvee)\into (\OO_k[G]\!\cdot\!I,+)$ induces an
isomorphism between $\Lie G\otimes\VV(I^\smallvee)$ and the subscheme
$\Der_G^1\subset\OO_k[G]$ of
$e$-derivations where $e$ is the neutral element of $G$.
\itemn{2} The adjoint action $\Ad:G\to\GL(\Lie(G,I))$
can be expressed as a conjugation inside $\OO_k[G][I]$:
\[
1+\Ad(g)x=g(1+x)g^{-1}=1+gxg^{-1},
\]
for all $x\in \Lie(G,I)=\Lie G\otimes\VV(I^\smallvee)$ and $g\in G$.
\itemn{3} The Lie bracket $[-,-]:\Lie G\times\Lie G\to \Lie G$
can be expressed as the bracket
defined by the associative multiplication of $\OO_k[G]$:
\[
[x,y]=xy-yx.
\]
\end{trivlist}
\end{prop}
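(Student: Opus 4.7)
The plan is to deduce part (1) by exhibiting $\T(\OO_k[G],I)$ as an algebra scheme extending $\OO_k[G]$, then using the group-scheme closed immersion $\nu_G:G\hookrightarrow\OO_k[G]^\times$ of Proposition~\ref{prop:elem properties of group algebra}(3) to transport the picture. First, I would apply Proposition~\ref{prop:group algebra}(4) to the morphism $h:\Spec(k[I])\to\Spec(k)$ to get a canonical isomorphism of $\OO_k$-algebra schemes $\OO_k[G]\otimes_{\OO_k}h_*\OO_{k[I]}\isomto h_*h^*\OO_k[G]=\T(\OO_k[G],I)$; since $h_*\OO_{k[I]}=\OO_k\oplus\VV(I^\smallvee)=:\OO_k[I]$ by paragraph~\ref{defs:base ring scheme}, this identifies $\T(\OO_k[G],I)$ with $\OO_k[G][I]$ as an algebra scheme. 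Applying $h_*h^*$ to $\nu_G$ and using that right adjoints commute with the pullback definition of units yields a closed immersion of group schemes $\T(G,I)\hookrightarrow\OO_k[G][I]^\times$, hence also $\Lie(G,I)\hookrightarrow\OO_k[G]\cdot I$ by restriction to the kernels of the projections to $G$ and $\OO_k[G]^\times$ respectively. The resulting diagram is the one stated, and splitness is inherited from $\alpha_G$.

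To unwind the identification of $\Lie(G,I)$ with derivations, I would note that a point of $\Lie(G,I)(R)$ is a $k[I]$-algebra map $A\otimes_k k[I]\to R[I]$ reducing mod $I$ to $e_R$; equivalently, a $k$-module map $A\to R[I]$ of the form $e_R+d$ with $d:A\to IR$ an $e_R$-derivation. Under the embedding in $\OO_k[G][I](R)$ this is literally $1+d$, so the image of $\Lie(G,I)$ in $\OO_k[G]\cdot I$ is $\Der_G^1\otimes\VV(I^\smallvee)$, canonically identified with $\Lie G\otimes\VV(I^\smallvee)$ via the standard realization of $\Lie G$ as $e$-derivations.

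For part (2), the adjoint action on $\Lie(G,I)$ is by definition conjugation in $\T(G,I)$. Since the embedding $\T(G,I)\hookrightarrow\OO_k[G][I]^\times$ is a group morphism, and under this embedding elements of $\Lie(G,I)$ are the units of the form $1+x$, distributivity of $\star$ over $+$ gives $g(1+x)g^{-1}=g\star g^{-1}+g\star x\star g^{-1}=1+gxg^{-1}$, as claimed.

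Part (3) follows by differentiating (2). Setting $I=k\eps$ and introducing an auxiliary $J=k\eps'$, a point $y\in\Lie G(R)$ corresponds to $1+\eps' y\in G(R[\eps'])\subset\OO_k[G][\eps'](R)$, whose inverse is $1-\eps' y$ since $\eps'^2=0$. Computing inside $\OO_k[G][\eps][\eps'](R)$,
\[
\Ad(1+\eps' y)(1+\eps x)=(1+\eps' y)(1+\eps x)(1-\eps' y)=1+\eps x+\eps\eps'(yx-xy),
\]
and reading off via the idempotence isomorphism $\Lie(\Lie G,k\eps')\simeq\Lie G\otimes\VV((k\eps')^\smallvee)$ of Lemma~\ref{lemma:isom d} yields $\ad(y)(x)=yx-xy$, equivalently $[x,y]=xy-yx$. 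The main obstacle is keeping consistent track of the three realizations of $\Lie(G,I)$ — as kernel of $\T(G,I)\to G$, as $e$-derivations inside $\OO_k[G]$, and as $\Lie G\otimes\VV(I^\smallvee)$ — throughout parts (1)--(3); once this bookkeeping is set up in part (1), the computations in (2) and (3) are immediate.
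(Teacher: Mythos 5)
Your proposal is correct and follows essentially the same route as the paper: identify $\T(\OO_k[G],I)\simeq\OO_k[G][I]$ via Proposition~\ref{prop:group algebra}(4), embed $\T(G,I)$ by applying the tangent functor to $\nu_G$ and read off Lie algebra elements as $1+d$ with $d$ an $e$-derivation, deduce (2) by distributivity of $\star$ over $+$ inside $\OO_k[G][I]^\times$, and deduce (3) by the same conjugation computation in $\OO_k[G][\eps][\eps']$ using the idempotence of $\Lie$. No gaps worth noting.
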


\begin{remark}
More generally, for all finite free $k$-modules $I,J$
we have an infinitesimal adjoint action
\[
\ad:=\Lie(\Ad,J):\Lie(G,J)\to\Hom(\Lie(G,I),\Lie(G,I\otimes J))
\]
which can be expressed as the bracket defined by the
associative multiplication of $\OO_k[G][I][J]$, that is,
$[x,y]=xy-yx$ for $x\in \OO_k[G]\!\cdot\!J$ and
$y\in\OO_k[G]\!\cdot\!I$. The expression in the particular
case $I=J=k\eps$ is obtained by writing $1+\eps x$ instead
of $1+x$.
\end{remark}

\begin{proof}
(1) By Proposition~\ref{prop:group algebra}(4),
we have an isomorphism of $\OO_k$-algebra schemes:
\[
\T(\OO_k[G],I)=h_*h^*\OO_k[G] \isomto
\OO_k[G]\otimes_{\OO_k}h_*\OO_{k[I]}
\simeq\OO_k[G][I].
\]
Under this identification, the map
$\T\nu_G:\T(G,I)\to\T(\OO_k[G],I)=\OO_k[G][I]$ can be described as follows:
for each $k$-algebra~$R$, a point $f\in\T(G,I)(R)$ is a morphism
$f:A\to R[I]$, $f(x)=u(x)+v(x)$ for some unique $k$-module
homomorphisms $u:A\to R$ and $v:A\to IR$, and we have
\[
\T\nu_G(f)=u+v \in \OO_k[G][I](R).
\]
This is the central vertical map in the pictured diagram.
The rest is clear.

\smallskip

\noindent (2) Using the inclusions of multiplicative monoids
$\alpha_G:G\into \T\!G$ and $\T\nu_G:\T\!G\into \OO_k[G][\eps]$,
we can view the conjugation action by $G$ inside the tangent
bundle or inside the tangent group algebra, as we wish.
The result follows.

\smallskip

\noindent (3) In order to compute $\ad$ we
differentiate and hence work in $\OO_k[G][\eps,\eps']$. That is,
the Lie algebra embedded by $y\mapsto 1+\eps y$ is acted upon by
the Lie algebra embedded by $x\mapsto 1+\eps 'x$, via conjugation
in the ambient $\OO_k[G][\eps,\eps']$. With these notations, the
identification $\End(\Lie G)\isomto \Lie(\GL(\Lie G))$ goes by
$f\mapsto 1+\eps' f$. All in all, the outcome is that $\ad(x)$
is determined by the condition that for all $y$ we have:
\[
(1+\eps' x)(1+\eps y)(1-\eps' x)=
1+\eps\big((\id+\eps'\ad(x))(y)\big).
\]
Since the left-hand side is equal to
$1+\eps y+\eps\eps'(xy-yx)$, this proves our claim.
\hfill $\Box$
\end{proof}

\begin{prop} \label{rigid defos of affine groups}
Let $k$ be a base ring and let $G$ be an affine $k$-group scheme.
\begin{trivlist}
\itemn{1} The set of $k[I]$-group scheme structures on the
scheme $h^*G$ that lift the $k$-group scheme structure of $G$
is in bijection with the set of 2-cocycles $c:G\times G\to \Lie G$.
\itemn{2} The set of isomorphism classes of rigid
deformations of $G$ over $k[I]$ is in bijection with
$\mathrm{H}^2(G,\Lie(G,I))$,
the second group cohomology of $G$ with coefficients in the adjoint
representation $\Lie(G,I)\simeq\Lie G\otimes\VV(I^\smallvee)$.
\end{trivlist}
\end{prop}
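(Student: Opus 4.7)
The plan is to leverage the group algebra machinery of Proposition~\ref{prop:embeddings in OG} to make explicit the relationship between $k[I]$-group structures on $h^*G$ and 2-cocycles, then to deduce part~(2) from Theorem~\ref{main_theorem_0}(1) combined with the classical cocycle classification of extensions.

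For part~(1), I first observe that $h^*G$ embeds canonically into the vector bundle $h^*\OO_k[G]\simeq \OO_k[G][I]=\OO_k[G]\oplus\OO_k[G]\!\cdot\!I$ of Proposition~\ref{prop:embeddings in OG}(1). A $k[I]$-group structure on $h^*G$ lifting that of $G$ is a multiplication $\mu\colon h^*G\times h^*G\to h^*G$ whose reduction mod $I$ equals the multiplication $m$ of $G$ and whose unit is the canonical lift of the unit of $G$. Comparing $\mu$ with the trivial lift $h^*m$, the difference lands in the relative kernel of $h^*G\to G$, which is canonically a torsor under $\Lie(G,I)$; since $I^2=0$, the difference factors through the closed fibre $G\times G$, producing a morphism $c\colon G\times G\to \Lie(G,I)$, normalized by the unit condition. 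Conversely, a normalized $c$ defines $\mu_c(g_1,g_2):=m(g_1,g_2)+c(g_1,g_2)$ interpreted inside $\OO_k[G][I]$, and this lands back in $h^*G$ because $\Lie(G,I)\subset \OO_k[G]\!\cdot\!I$ is precisely the Lie algebra piece of the torsor kernel. The key step is to translate associativity of $\mu_c$ into the cocycle identity
\[
c(g_1g_2,g_3)+c(g_1,g_2)=\Ad(g_1)\,c(g_2,g_3)+c(g_1,g_2g_3),
\]
which is a direct computation inside the associative algebra $\OO_k[G][I]$, using that the adjoint action on $\Lie(G,I)$ is realized by conjugation in the group algebra (Proposition~\ref{prop:embeddings in OG}(2)). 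Inverses exist automatically since the underlying scheme $h^*G$ is smooth over $G$ and $\mu_c$ admits a local inverse by a standard argument. This establishes the bijection claimed in~(1).

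For part~(2), I appeal directly to Theorem~\ref{main_theorem_0}(1): rigid deformations of $G$ over $k[I]$ are equivalent, via Weil restriction, to extensions $1\to\Lie(G,I)\to E\to G\to 1$ endowed with the adjoint action. The classical theory (see Demazure--Gabriel~\cite{DG70}, chap.~II, \S~3, no~3.3) classifies isomorphism classes of such extensions by $\mathrm{H}^2(G,\Lie(G,I))$, yielding the claim. Alternatively, one can conclude directly from~(1): two cocycles $c,c'$ give isomorphic rigid deformations iff there is an automorphism of $h^*G$ lifting $\id_G$ intertwining $\mu_c$ and $\mu_{c'}$. Writing such an automorphism as $g\mapsto g+\varphi(g)$ for a pointed morphism $\varphi\colon G\to \Lie(G,I)$ (again using the group algebra embedding), the intertwining condition unwinds to $c-c'=\partial\varphi$, so the set of isomorphism classes is $\mathrm{Z}^2(G,\Lie(G,I))/\mathrm{B}^2(G,\Lie(G,I))=\mathrm{H}^2(G,\Lie(G,I))$.

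The main obstacle is the bookkeeping in part~(1): recovering the $\Ad$-twist in the cocycle identity requires carefully expanding the associativity equation $\mu_c(\mu_c(g_1,g_2),g_3)=\mu_c(g_1,\mu_c(g_2,g_3))$ inside the noncommutative algebra $\OO_k[G][I]$ and identifying the conjugation terms with the adjoint action. Once this computation is in hand, both the cocycle identity and the coboundary relation in~(2) emerge transparently, parallel to the classical theory of group extensions.
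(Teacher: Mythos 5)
Your part~(1) and the ``alternative'' argument you give for part~(2) are essentially the paper's proof: embed everything into $\OO_k[G][I]$ via the group algebra, write a lifted multiplication as the trivial one translated by a section of $\Lie(G,I)$, extract the cocycle identity (with the $\Ad$-twist realized as conjugation in the group algebra) from associativity, and obtain the coboundary relation $c-c'=\partial\varphi$ from automorphisms of $h^*G$ lifting $\id_G$, written as $u\mapsto(1+\psi(u))u$. So the core of your proposal coincides with the paper.

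Three points need repair, however. First, the formula $\mu_c(g_1,g_2)=m(g_1,g_2)+c(g_1,g_2)$, taken literally as addition in the vector bundle $\OO_k[G][I]$, does not give a point of $h^*G$: $c(g_1,g_2)$ is an $e$-derivation, so $g_1g_2+c(g_1,g_2)$ is not of the form (algebra homomorphism) plus (derivation along it). The correct expression is the group-algebra product $(1+c(g_1,g_2))\,g_1g_2$, i.e.\ the torsor translate; with the literal sum the associativity computation you announce cannot be carried out inside $h^*G$. Second, existence of inverses for $\mu_c$ cannot be justified by ``smoothness of $h^*G$ over $G$'': $G$ is an arbitrary affine $k$-group scheme here (no smoothness, not even flatness, is assumed), and $h^*G\to G$ is an infinitesimal thickening, not a smooth morphism. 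The paper sidesteps any such hypothesis by exhibiting the inverse explicitly, $u^{\odot-1}=u^{-1}(1-c(u,u^{-1}))$; some such argument is needed. Third, your primary route for (2) via Theorem~\ref{main_theorem_0} is not available at this level of generality: the equivalence $h_*$ is established only for affine, \emph{differentially flat}, rigid group schemes, whereas this proposition (deliberately placed in the appendix as a direct application of the group algebra, independent of the main theorem) assumes nothing beyond affineness, so only your direct argument is valid here. Finally, a small mismatch with the statement of (1): imposing that the unit of the lifted structure be the canonical section restricts you to normalized cocycles, while the asserted bijection is with \emph{all} 2-cocycles valued in $\Lie(G,I)$; drop the unit condition (the unit of $(h^*G,\odot)$ is determined by $\odot$ but need not be the canonical lift).
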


\begin{proof}
(1) We want to deform the multiplication
$m:G\times G\too G, \quad (u,v)\mapsto uv$ into a multiplication
$\tilde m:h^*G\times h^*G\to h^*G$ which by adjunction
we can view as a map:
\[
G\times G\to h_*h^*G=\T(G,I),
\quad (u,v)\mapsto u\odot v.
\]
We use embeddings inside the group algebra as
in Proposition~\ref{prop:embeddings in OG}; thus both
targets of $m$ and $\tilde m$ are embedded in $\OO_k[G][I]$.
The condition that $\tilde m$ equals $m$ modulo $I$
is that these morphisms agree after composition with the
projection $\pi:\OO_k[G][I]\to\OO_k[G]$. In other words the
condition is that for all points $u,v\in G$, the element
$(u\odot v)(uv)^{-1}$ equals 1 modulo $I$. Since this is also
a point of $\T(G,I)$, it belongs to $\Lie(G,I)$. Hence we can write
\[
u\odot v=(1+c(u,v))uv
\]
for some $c:G\times G\to\Lie(G,I)$. The associativity constraint
$(u\odot v)\odot w=u\odot (v\odot w)$ gives
the cocycle relation:
\[
c(uv,w)+c(u,v)=c(u,vw)+uc(v,w)u^{-1}.
\]
Conversely, if $c:G\times G\to \Lie(G,I)$ is a 2-cocycle,
we define
\[
u\odot v:=(1+c(u,v))uv.
\]
The cocycle identity gives the associativity of this map.
Moreover, inverses for this law exist and are given by the formula
$u^{\odot -1}=u^{-1}(1-c(u,u^{-1}))$. So we obtain a
$k[I]$-group scheme $\mathscr{G}_c:=(\Spec(A[I]),\odot)$
which is a deformation of $G$.

\smallskip

\noindent (2) Let $\mathscr{G}$ be a rigid deformation of $G$. Choosing an
isomorphism of schemes $\varphi_1:\mathscr{G}\isomto h^*G$, the induced
group scheme structure on $h^*G$ gives rise to a cocycle $c$
as explained in (1). Choosing another isomorphism
$\varphi_2:\mathscr{G}\isomto h^*G$, we have an automorphism
$\xi=\varphi_2\circ\varphi_1^{-1}:h^*G\to h^*G$
which restricts to the identity on $G$. The map
$G\to h_*h^*=\T(G,I)$ obtained by adjunction is of the form
$u\mapsto (1+\psi(u))u$ for some morphism $\psi:G\to\Lie(G,I)$.
This means that $\xi(u)=(1+\psi(u))u$.
We want to see how the multiplication is transformed by the change
of isomorphism. If $u'=(1+\psi(u))u$ then $u=(1-\psi(u'))u'$,
a short computation shows that that the map
$\xi\circ\odot\circ\xi^{-1}$ takes $(u',v')$ to
$[1+(\psi(u'v')+c(u',v')-\psi(u')-\Ad(u')\psi(v'))]u'v'$.
We see that $c$ changes by the coboundary $\partial\psi$
and the class $[c] \in \mathrm{H}^2(G,\Lie(G,I))$ does not depend on the
choice of isomorphism $\mathscr{G}\isomto h^*G$. To obtain the inverse
bijection, one chooses a cocycle $c$ and attaches the deformation
$\mathscr{G}_c$ as in (1). The map is well-defined because another
choice of $c$ in the same cohomology class gives an isomorphic
deformation.
\hfill $\Box$
\end{proof}

\appendix{B}{Module stacks in groupoids}
\label{appendix:module groupoids}

Both categories $\Gr\!/k[I]$ and $\Ext(I)/k$ are endowed
with the structure of {\em $\OO_k$-module stacks in
groupoids over $\Gr\!/k$} and the purpose of this Appendix is
to explain what this means. In the two cases this seems to be a
well-known fact, but we were able to locate only very few
discussions of this topic in the literature. In fact, the
additive part of the structure, which goes by the name of a
``Picard category'', is well documented, a landmark being
Deligne's expos\'e in~\cite{SGA4.3}, Exp.~XVIII, \S~1.4.
However, the linear part of the structure, that is the
$\OO_k$-scalar multiplication and its interplay with the
additive structure, is almost absent from the literature.
Subsections 2.3, 2.4
and 2.5 of Osserman~\cite{Os10} are a first step, but the
author writes: {\em Although it is possible to 
{\em [define scalar multiplication maps]} on a categorical
level as we did with addition, expressing the proper conditions
for associativity and distributivity isomorphisms becomes
substantially more complicated}. Here we simply provide a
definition in due form. An extended version of the article
available on the authors' webpage contains a treatment including
a few basic results to highlight the nontrivial features of
the theory.

We start with the definition of Picard categories. The
alternative phrase {\em commutative group groupoids} is a
more accurate name to refer to them, but we stick with the
traditional name.
In order to make the axioms reader-friendly, we adopt a
simplified description for the multifunctors involved, e.g.
the associativity isomorphism $a:T_1\to T_2$ between
the trifunctors $T_1,T_2:P\times P\times P\to P$ given by
$T_1=+\circ (+\times \id)$ and $T_2=+\circ (\id\times +)$
is given in the form of isomorphisms
$a_{x,y,z}:(x+y)+z\to x+(y+z)$ functorial in $x,y,z\in P$.

\begin{definition*} \label{definition:constraints}
Let $P$ be a category and $+:P\times P\to P$ a bifunctor.
\begin{trivlist}
\itemn{1} An {\em associativity constraint} for $+$ is an
isomorphism of functors $a_{x,y,z}:(x+y)+z\isomto x+(y+z)$
such that the pentagon axiom (\cite{SGA4.3}, Exp.~XVIII, 1.4.1)
is satisfied. It is called {\em trivial} or {\em strict}
if $a_{x,y,z}=\id$ for all $x,y,z\in P$.
\itemn{2} A {\em commutativity constraint} for $+$ is an
isomorphism of functors $c_{x,y}:x+y\isomto y+x$ which
satisfies $c_{y,x}\circ c_{x,y}=\id_{x+y}$ for all $x,y\in P$.
It is called {\em trivial} or {\em strict} if $c_{x,y}=\id$
for all $x,y\in P$.
\itemn{3} The associativity and commutativity constraints $a$
and $c$ are {\em compatible} if the hexagon axiom
(\cite{SGA4.3}, Exp.~XVIII, 1.4.1) is satisfied.
\itemn{4} A {\em neutral element} for $+$ is an object
$0\in P$ with an isomorphism $\varphi:0+0\isomto 0$.
\end{trivlist}
\end{definition*}

In other mathematical contexts, associativity constraints
are called {\em associators} and commutativity
constraints are called {\em symmetric braidings}.

\begin{definition*}
Let $(P_1,+)$ and $(P_2,+)$ be categories endowed with
bifunctors. Let $F:P_1\to P_2$ be a functor and
$\varphi_{F,x,y}:F(x+y)\isomto F(x)+F(y)$ an
isomorphism of functors.
\begin{trivlist}
\itemn{1} Let $a_1,a_2$ be associativity constraints on
$(P_1,+)$ and $(P_2,+)$. We say that $(F,\varphi_F)$
is {\em compatible with $a_1,a_2$} if the following diagram
commutes:
\[
\xymatrix@C=15mm{
F((x+y)+z) \ar[r]^-{\varphi_F} \ar[d]_{F(a_1)}
& F(x+y)+F(z) \ar[r]^-{\varphi_F}
& (F(x)+F(y))+F(z) \ar[d]^{a_2} \\
F(x+(y+z)) \ar[r]^-{\varphi_F}
& F(x)+F(y+z) \ar[r]^-{\varphi_F} & F(x)+(F(y)+F(z)).}
\]
\itemn{2} Let $c_1,c_2$ be commutativity constraints on
$(P_1,+)$ and $(P_2,+)$. We say that $(F,\varphi_F)$ is
{\em compatible with $c_1,c_2$} if the following diagram
commutes:
\[
\xymatrix@C=15mm{
F(x+y) \ar[r]^-{\varphi_F} \ar[d]_-{F(c_1)}
& F(x)+F(y) \ar[d]^-{c_2} \\
F(y+x) \ar[r]^-{\varphi_F} & F(y)+F(x).}
\]
\end{trivlist}
\end{definition*}

\begin{definition*}
A {\em Picard category} is a quadruple $(P,+,a,c)$ composed
of a nonempty groupoid $P$, a bifunctor $+:P\times P\to P$
with compatible associativity and commutativity constraints
$a$ and $c$, such that for each $x\in P$
the functor $P\to P$, $y\mapsto x+y$ is an equivalence.
\end{definition*}

\bigskip

Any Picard category $P$ has a neutral element $0$
which is unique up to a unique isomorphism
(\cite{SGA4.3}, Exp.~XVIII, 1.4.4). Moreover, for each
$x,y\in P$ the set of morphisms $\Hom(x,y)$ is either empty
or a torsor under the group $G:=\Aut(0)$. More precisely, the
functors $+x:P\to P$ and $x+:P\to P$ induce the same
bijection $G\to\Aut(x)$, $\varphi\mapsto\varphi+\id_x$.
Viewing this bijection as an identification, the set
$\Hom(x,y)$ with its right $\Aut(x)$-action and left
$\Aut(y)$-action becomes a pseudo-$G$-bitorsor, i.e.
it is either empty or a $G$-bitorsor.

\begin{definition*}
Let $P_1,P_2$ be Picard categories.
\begin{trivlist}
\itemn{1} An {\em additive functor}
is a pair $(F,\varphi_F)$ where $F:P_1\to P_2$ is a
functor and $\varphi_{F,x,y}:F(x+y)\isomto F(x)+F(y)$ is an
isomorphism of functors that is compatible with associativity
and commutativity constraints.
\itemn{2}
Let $F,G:P_1\to P_2$ be additive functors.
A {\em morphism of additive functors} is a morphism of functors
$u:F\to G$ such that the following diagram is commutative:
\[
\xymatrix@C=15mm{
F(x+y) \ar[r]^-{u_{x+y}} \ar[d]_{\varphi_F}
& G(x+y) \ar[d]^{\varphi_G} \\
F(x)+F(y) \ar[r]^-{u_x+u_y} & G(x)+G(y).}
\]
\end{trivlist}
\end{definition*}

We emphasize that since a Picard category is a groupoid
(that is, all its morphisms are isomorphisms), all
morphisms of additive functors $u:F\to G$ are isomorphisms.

The {\em category of additive functors} $\Hom(P_1,P_2)$
is itself a Picard category
(\cite{SGA4.3}, Exp.~XVIII, 1.4.7). Additive functors can be
composed and the identity functors behave as neutral elements.
In the particular case where $P_1=P_2=P$, along with its
addition law, the Picard category $\End(P)=\Hom(P,P)$ enjoys
an internal multiplication given by composition. Note that
in this case multiplication is strictly associative,
because so is composition of functors in categories.

In fact $\End(P)$ is a {\em ring category}, but in order
to introduce module groupoids, we do not
actually need to define what is such a thing.

\begin{definition*} \label{definition:module groupoid}
Let $\Lambda$ be a commutative ring.
A {\em $\Lambda$-module groupoid} is a Picard category $P$
endowed with a functor $F=(F,\varphi_F,\psi_F):\Lambda\to\End(P)$
called {\em scalar multiplication} such that:
\begin{trivlist}
\itemm{1} $(F,\varphi_F)$ is an additive functor.
\end{trivlist}
For each $\lambda\in \Lambda$, for simplicity we write
$(\lambda,\varphi_\lambda)$ for
$(F_\lambda,\varphi_{F_\lambda}):P\to P$. Moreover:
\begin{trivlist}
\itemm{2} $(F,\psi_F)$ is multiplicative, i.e. $F(1)=\id_P$
and $F$ is compatible with the associativity constraints of
multiplication.
\itemm{3} $F$ is compatible with the distributivity of
multiplication over addition:
\[
\xymatrix@C=15mm{
(\lambda(\mu+\nu))x \ar[r]^-{\psi_F} \ar@{=}[d] &
\lambda((\mu+\nu)x) \ar[r]^-{\varphi_F}
& \lambda(\mu x+\nu x) \ar[d]^-{\varphi_\lambda} \\
(\lambda\mu+\lambda\nu)x \ar[r]^-{\varphi_F} &
(\lambda\mu)x+(\lambda\nu)x \ar[r]^-{\psi_F+\psi_F} &
\lambda(\mu x)+\lambda(\nu x)}
\]
commutes.
\end{trivlist}
\end{definition*}

\begin{definition*} \label{definition:module stack}
Let $\cS$ be a site. Let $\Lambda$ be a sheaf of commutative
rings on $\cS$. A {\em $\Lambda$-module stack (in groupoids)
over $\cS$} is a stack in groupoids $P$ over $\cS$
endowed with
\begin{trivlist}
\itemm{1} a functor $+:P\times P\to P$,
\itemm{2} isomorphisms of functors
$a_{x,y,z}:(x+y)+z\isomto x+(y+z)$ and
$c_{x,y}:x+y\isomto y+x$,
\itemm{3} a functor $F=(F,\varphi_F,\psi_F):\Lambda\to\End(P)$,
\end{trivlist}
such that for each $U\in\cS$ the fibre category $P(U)$
is a $\Lambda(U)$-module groupoid.
\end{definition*}

\bigskip

There is an obvious corresponding relative notion of
{\em $\Lambda$-module stack (in groupoids)} over a given
$\cS$-stack $Q$, namely, it is a morphism of stacks $P\to Q$
that makes $P$ a stack fibred in groupoids over $Q$,
with an addition functor $+:P\times_Q P\to P$ etc.
It is the relative notion that is useful in the paper.

\providecommand{\bysame}{\leavevmode\hbox to3em{\hrulefill}\thinspace}
%
%

\bibliographystyle{amsalpha}
\bibliographymark{References}
\def\cprime{$'$}

\end{document}